\documentclass[11pt]{article}

\usepackage[margin=1in]{geometry}
\usepackage{amsmath,amssymb,amsthm}
\usepackage{graphicx}
\usepackage{authblk}
\usepackage{booktabs}
\usepackage{float}
\usepackage[numbers,sort&compress]{natbib}
\usepackage{hyperref}
\usepackage{xcolor}
\usepackage{algorithm}
\usepackage{algpseudocode}
\hypersetup{hidelinks,hypertexnames=false}
\graphicspath{{figures/}{./}}

\newcommand{\phit}{\widetilde{\varphi}}
\newcommand{\R}{\mathbb{R}}
\newcommand{\pade}[2]{\ensuremath{[{#1}/{#2}]}}

\newtheorem{theorem}{Theorem}[section]
\newtheorem{lemma}[theorem]{Lemma}
\newtheorem{proposition}[theorem]{Proposition}
\newtheorem{corollary}[theorem]{Corollary}
\newtheorem{assumption}[theorem]{Assumption}

\newtheorem{definition}[theorem]{Definition}

\title{\textbf{MIISO: Modal Integrators for Isogeometric\\
Stabilization of Outliers}\\
A Family of Modified Exponential Rosenbrock--Krylov Methods}

\author[1,2]{Sreeram Shankar}
\affil[1]{Oden Institute for Computational Engineering and Sciences,
The University of Texas at Austin, Austin, TX, USA}
\affil[2]{Department of Aerospace Engineering and Engineering Mechanics,
The University of Texas at Austin, Austin, TX, USA}

\date{\today}

\begin{document}
\maketitle

\begin{abstract}
\noindent
High-order isogeometric discretizations produce a small number of non-physical outlier modes at the top of the discrete spectrum.
These modes can pollute the numerical solution in wave propagation and structural dynamics.
Most existing remedies modify the spatial discretization through outlier-free bases, mass lumping, subspace projection, and other methods.
We introduce MIISO (Modal Integrators for Isogeometric Stabilization of Outliers), a family of exponential Rosenbrock--Krylov integrators that suppress outlier modes temporally while leaving the spatial discretization unmodified.
The method identifies the outliers and modifies the time-integration scheme
to annihilate them while leaving physical modes without dissipation.
The integrators are evaluated matrix-free through Krylov approximation, making them efficient and scalable.
While developed for isogeometric analysis, the mechanism extends to any method in which a spectral pathology can be characterized.
The family is unconditionally $A$-stable and globally convergent at orders two, three, and four, and it extends to nonlinear problems.
Numerical experiments confirm the intended outlier and physical mode behavior on a variety of isogeometric problems compared to other time integration methods, and show that MIISO matches previous spatial outlier removal without changing the discretization.
They also recover the predicted convergence orders and show lower cost than generalized-$\alpha$ at matched accuracy.
\end{abstract}

\section{Introduction}
\label{sec:intro}

Isogeometric analysis (IGA) discretizations built from smooth spline basis functions
reproduce the spectrum of elliptic and hyperbolic problems with an
accuracy that is substantially better than classical
$C^{0}$ finite elements~\citep{cottrell2006}.
A pathological feature of these smooth bases is a small number of eigenvalues
at the top of the discrete spectrum that are non-physical and inaccurate,
known as \emph{outlier modes}.
They arise from reduced continuity at patch boundaries and from boundary
conditions, and their number can be characterized a~priori from the
polynomial degree and boundary data~\citep{hiemstra2021}.
Even when displacement remains accurate, outlier modes can pollute velocity
and acceleration and thus motivate their removal for stable simulation.

\subsection{Spatial treatment of the outliers}
\label{sec:intro-spatial}

The established remedies modify the spatial semidiscretization before time
integration.
Boundary penalization adds terms to the discrete operators that suppress the
boundary-layer behavior responsible for the worst
outliers~\citep{deng2021penalty}.
Optimally blended quadrature replaces the standard Gauss rule used to
assemble the mass and stiffness matrices with a mixture chosen to cancel
spectral error~\citep{calo2017blending}.
Variational perturbation methods treat outlier eigenvalues as a perturbation
of the continuous spectrum and correct the discrete operator
accordingly~\citep{hiemstra2023removal}.
Mass-lumping with deflation modifies the mass matrix and removes the outlier
eigenspace once it is identified~\citep{voet2024lumping}.
Outlier-free spline spaces have also been constructed so that no outlier
eigenvalues appear in the discrete
spectrum~\citep{manni2022,lamsahel2025}.
These approaches have been applied previously to nonlinear dynamic problems~\citep{nguyen2024rods}.

These approaches are effective, but each requires additional work at the level of the spatial semidiscretization. Standard IGA codes that do not already incorporate one of them must be modified before they can suppress outliers. A remedy that operates through the time integrator instead would allow existing codes to be used as they are, without any need to alter the spatial discretization to remove outliers. It would also make outlier removal a concern of the integrator instead of the discretization, which could often be a simpler and more broadly applicable method.

\subsection{Global temporal damping}
\label{sec:intro-global}

A separate class of methods controls high-frequency numerical dissipation through
time integration.
The generalized-$\alpha$ method~\citep{chunghulbert1993} and its
predecessors (Newmark, HHT-$\alpha$~\citep{hilberhughestaylor1977},
WBZ-$\alpha$) introduce a parameter that controls how strongly
high-frequency modes are damped each step.
Related Pad\'e-based schemes also use a single damping parameter for the same
purpose~\citep{song2022}.
When that parameter induces any dissipation, every oscillatory mode receives
some damping at every step size
(Proposition~\ref{prop:global-dissipation}).
On an isogeometric discretization this global damping can overdamp physical
modes and thereby reduce the high spectral accuracy of the spline
basis.
A temporal remedy must therefore supply \emph{selective} damping, with no added
dissipation on physical modes but strong annihilation on the outliers.

\subsection{Exponential Rosenbrock--Krylov methods}
\label{sec:intro-exp}

Exponential Rosenbrock methods are matrix-function integrators for stiff
nonlinear differential equations~\citep{hochbruckostermann2009}.
At each step they re-linearize about the local Jacobian and advance with
functions of that Jacobian related to the exponential.
When those functions are evaluated by Krylov projection, only
Jacobian--vector products are required and no dense matrix function is
formed~\citep{hochbrucklubich1997,saad1992}, so the scheme is matrix-free.
On large systems this is typically cheaper per step than implicit methods
that form and refactor the Jacobian, while the matrix-function treatment
avoids the explicit eigenvalue stability conditions, such as the CFL condition, of explicit schemes.
Applied previously to elastodynamics~\citep{michels2017}, these methods introduce little artificial damping across the spectrum.

\subsection{Contribution and relation to prior work}
\label{sec:intro-contrib}

The preceding sections point to a temporal route to the outlier
problem, a mechanism for selective damping that acts without changing
the spatial discretization.
Existing IGA discretizations and codes can therefore be reused unchanged,
with outlier treatment handled entirely by the time
integrator.

Among temporal integrators, exponential Rosenbrock--Krylov schemes are a
natural base for this setting.
As in Section~\ref{sec:intro-exp}, they are matrix-function methods that
become matrix-free under Krylov evaluation and thus offer efficient time
stepping on large systems.
They do not, however, provide a selective damping mechanism for the
outliers.
Global damping methods modify the amplification factor to introduce damping, but they apply
the same rule to every oscillatory mode.

MIISO (Modal Integrators for Isogeometric Stabilization of Outliers)
introduces a temporal approach to selective damping of pathological
spectral modes.
It is developed for IGA, where the outlier band is separable and its
count is known analytically, but its mechanism can be applied wherever an
analogous pathology can be identified.
It retains an exponential Rosenbrock--Krylov framework but replaces the
exact exponential amplification factor by a non-holomorphic piecewise blend
of conservative and dissipative Pad\'e approximants, placed using the
a~priori outlier count~\citep{hiemstra2021}.
The outliers are annihilated while resolved physical modes receive no
added numerical dissipation.
MIISO is thus a \emph{modified exponential Rosenbrock--Krylov method},
matrix-free through Krylov approximation of the matrix functions while inducing
selective damping through the non-holomorphic, piecewise construction
of the amplification map.

We show that MIISO2, MIISO3, and MIISO4 are unconditionally $A$-stable
for the Dahlquist test equation, converge at global orders two, three, and
four under the given hypotheses, and extend to
nonlinear dynamics. We also establish when the selective identification of outlier modes remains valid as the Jacobian evolves and when it breaks down, and analyze how the method's dissipation of outlier modes depends on the step size.
Numerically, we confirm selective damping on IGA spectra compared with
generalized-$\alpha$, show that MIISO matches the outlier annihilation of spatial
subspace removal without modifying the
discretization, recover the expected convergence orders on nonlinear problems,
and demonstrate a cost advantage over generalized-$\alpha$ at matched accuracy.
The construction is developed for IGA, but the mechanism of identifying pathological eigenvalues and modifying the corresponding amplification factor accordingly can be extended to other discretizations exhibiting similar spectral pathology.

\section{Preliminaries}
\label{sec:prelim}

\subsection{Isogeometric outlier modes}
\label{sec:outliers}

For a one-dimensional bar or beam discretized with $C^{p-1}$ splines of
degree~$p$, \citet{hiemstra2021} show that the number of outlier
eigenpairs,~$k$, depends on~$p$ and the boundary conditions.
Writing $k_{\mathrm{bar}}(p,\mathrm{bc})$ for the contribution of one end of a
bar,
\begin{equation}
\begin{aligned}
k_{\mathrm{bar}}(p,\mathrm{fixed})
&= \Bigl\lfloor\frac{p-1}{2}\Bigr\rfloor,
&
k_{\mathrm{bar}}(p,\mathrm{free})
&= \Bigl\lfloor\frac{p}{2}\Bigr\rfloor,
\end{aligned}
\end{equation}
with analogous one-end counts for beams with fixed, pinned, and free ends.
For a boundary pair $(\ell,r)$ on a bar the total is
$k=k_{\mathrm{bar}}(p,\ell)+k_{\mathrm{bar}}(p,r)$, and likewise for beams
using the corresponding counts.
In two or three dimensions, where the
multivariate basis is the tensor product of one-dimensional spline
bases, $k$ follows from the directional counts by the usual product
rule~\citep{hiemstra2021}.

After spatial discretization, the dynamics take first-order form with
system size~$N$ and Jacobian~$J$.
With time step~$h>0$, the scaled operator is~$hJ$.
Ordering the eigenvalues of~$hJ$ by decreasing magnitude,
$|\lambda_1(hJ)|\ge\cdots\ge|\lambda_N(hJ)|$, the \emph{outlier count}~$k$ identifies the \emph{outlier band} as the first~$k$ of these eigenvalues.
A convenient scalar location of that band is the cutoff
\begin{equation}
\omega_c = \bigl|\lambda_k(hJ)\bigr|,
\label{eq:omegac}
\end{equation}
the modulus of the $k$-th largest-magnitude eigenvalue of~$hJ$.
The corresponding eigenpairs are the \emph{outlier modes}.
The remaining $N-k$ eigenpairs with $|\lambda_i(hJ)|<\omega_c$ are the
\emph{resolved modes}, the part of the discrete spectrum the spline
discretization approximates accurately.
For a fixed--fixed bar of degree~$p=4$, for example, $k=2$.

\subsection{Exponential Rosenbrock methods}
\label{sec:exprb}

For the initial-value problem
\begin{equation}
y'(t)=f(t,y(t)), \qquad y(t_0)=y_0,
\end{equation}
with $y(t)\in\R^N$, write
$J_n:=f_y(t_n,y_n)$ for the Jacobian of~$f$ with respect to~$y$ at step~$n$,
and $f_t(t_n,y_n)$ for the explicit time derivative of~$f$ when~$f$ is nonautonomous.
Exponential Rosenbrock methods re-linearize at each step about
$J:=J_n$ and advance with matrix functions of~$hJ$~\citep{hochbruckostermann2009}.

The exponential Rosenbrock--Euler step,
\begin{equation}
y_{n+1}
= y_n + h\,\varphi_1(hJ)\,f(t_n,y_n) + \tfrac12 h^2 f_t(t_n,y_n),
\qquad
\varphi_1(z)=\frac{e^z-1}{z},
\end{equation}
is second-order for nonautonomous~$f$.
Higher-order members introduce further $\varphi$-functions through the
recurrence
\begin{equation}
\varphi_{j+1}(z)=\frac{\varphi_j(z)-1/j!}{z},
\qquad
\varphi_0(z)=e^z,
\end{equation}
together with internal stages.
For nonlinear~$f$ those stages produce defect corrections that vanish
when~$f$ is linear, so the step reduces to the linear amplification factor.
The standard third- and fourth-order schemes of this type are exprb32 and
exprb43~\citep{hochbruckostermann2009}.

In large systems, the quantity $\varphi_j(hJ)v$ on a vector
$v\in\R^N$ can be evaluated by Krylov projection.
An $m$-step Arnoldi iteration on~$hJ$ started from~$v$ yields an
orthonormal
basis $V_m\in\R^{N\times m}$ and an $m\times m$ upper Hessenberg matrix
$H_m$ ($m\ll N$) such that
\begin{equation}
\varphi_j(hJ)\,v
\;\approx\;
\|v\|\,V_m\,\varphi_j(H_m)\,e_1,
\end{equation}
where $e_1=(1,0,\ldots,0)^\top\in\R^m$ is the first coordinate
vector~\citep{hochbrucklubich1997,saad1992}.
Only matrix-vector products with~$J$ are required.

\subsection{Global damping methods}
\label{sec:global-damp}

The generalized-$\alpha$ method modifies the amplification matrix of the
second-order structural recurrence to induce damping of high-frequency modes.
The mixed-order Pad\'e scheme of~\citet{song2022} applies the same idea
through a blend of different Pad\'e approximants to control dissipation.
In both cases a single parameter, often denoted $\rho_\infty$, sets the
damping level.

\begin{proposition}[Dissipation on every oscillatory mode]
\label{prop:global-dissipation}
For any fixed $\rho_\infty\in[0,1)$, let $\mathbf{A}(\Omega)$ be the
generalized-$\alpha$ amplification matrix and $\rho(\Omega)$ its spectral
radius, and let $R_{\mathrm{Song}}$ be the mixed-Pad\'e map
of~\citet{song2022}.
Then $\rho(\Omega)<1$ and $|R_{\mathrm{Song}}(i\Omega)|<1$ for every
$\Omega=\omega\Delta t>0$.
Every oscillatory mode therefore receives some amount of dissipation at
every step size.
\end{proposition}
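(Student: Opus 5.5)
The plan is to treat the two schemes separately. Each reduces to a statement about a one-parameter family of rational functions on the imaginary axis (or a $3\times 3$ amplification matrix). For both, use the standard parametrization in terms of $\rho_\infty$:
\[
\alpha_m=\frac{2\rho_\infty-1}{\rho_\infty+1},\quad
\alpha_f=\frac{\rho_\infty}{\rho_\infty+1},\quad
\gamma=\tfrac12-\alpha_m+\alpha_f,\quad
\beta=\tfrac14(1-\alpha_m+\alpha_f)^2,
\]
following \citet{chunghulbert1993}, and the explicit mixed-Pad\'e construction of \citet{song2022}.

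\textbf{Generalized-$\alpha$.} Applied to the undamped SDOF equation $\ddot u+\omega^2u=0$, the recurrence gives $\mathbf{A}(\Omega)$ with characteristic polynomial
\[
\lambda^3-A_1\lambda^2+A_2\lambda-A_3,
\]
whose coefficients are rational in $\Omega^2$. Chung--Hulbert show that for these parameters one eigenvalue is spurious, with modulus bounded by the principal pair's. The principal pair is complex conjugate for $\Omega$ below the bifurcation point, so $\rho(\Omega)^2=\lambda\bar\lambda$ can be read off from the polynomial. I would prove $\rho(\Omega)<1$ in three steps.

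1. Use the Routh--Hurwitz/Schur--Cohn criterion, or equivalently map $\lambda=(1+z)/(1-z)$ and check strict Hurwitz conditions. This gives all roots inside the closed unit disk for every $\Omega>0$; that is unconditional stability, which Chung--Hulbert already establish.
2. Exclude roots on the unit circle. Substitute $\lambda=e^{i\theta}$ and show that the real and imaginary parts of the characteristic polynomial vanish simultaneously only when $\Omega=0$. The key identity is that the dissipation term is proportional to $(\alpha_f-\alpha_m+\tfrac12-\gamma)$ plus a $\beta$-term. Under the optimal parameters this reduces to a quantity like $(1-\rho_\infty)\Omega^2\cdot(\text{positive})$, which is strictly positive for $\rho_\infty<1$ and $\Omega>0$.
3. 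Treat the spurious root separately, showing its modulus is at most $\rho_\infty<1$.

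\textbf{Song's mixed Pad\'e map.} $R_{\mathrm{Song}}$ is a convex-type combination of a diagonal (conservative) Pad\'e approximant and a subdiagonal (L-stable) one, with weight fixed by $\rho_\infty$. On the imaginary axis the diagonal approximant has modulus exactly $1$, while the subdiagonal one has modulus strictly less than $1$ for $\Omega\neq0$. I would write
\[
|R(i\Omega)|^2=\frac{|P(i\Omega)|^2}{|Q(i\Omega)|^2}.
\]
Since the polynomial $E(\Omega)=|Q|^2-|P|^2$ is even in $\Omega$, the goal is to show that all its coefficients are nonnegative, with at least one strictly positive, whenever $\rho_\infty<1$. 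This is the standard E-polynomial argument for A-stability. Strict positivity for $\Omega>0$ then gives $|R_{\mathrm{Song}}(i\Omega)|<1$.

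\textbf{Main obstacle.} The hardest step is step 2 for generalized-$\alpha$: showing strict, not merely weak, contraction for every $\Omega>0$, including across the bifurcation point where the principal roots become real. Past that point I would argue directly that both real roots lie in $(-1,1)$ by evaluating the characteristic polynomial at $\lambda=\pm1$. There $p(1)\propto\Omega^2>0$, and $p(-1)$ has a definite sign controlled by $\rho_\infty<1$. I expect the algebra of $p(-1)$ and the unit-circle exclusion to be the delicate part, and would check it symbolically before writing it.
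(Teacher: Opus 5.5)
Your route differs from the paper's. The paper's proof is essentially a citation. It notes $\rho(0)=1$ and $\rho(\Omega)\to\rho_\infty$, and takes the strict bound at every finite $\Omega>0$ from \citet{chunghulbert1993} and \citet{song2022}. Endpoint values alone do not rule out the spectral radius reaching one at an intermediate frequency. Also, $\rho$ is not a rational function of $\Omega^2$: for $\rho_\infty=0$ it behaves like $\Omega^{-2/3}$ at infinity. Your unit-circle exclusion for the characteristic cubic, and your $E$-polynomial argument for the mixed-Pad\'e map, therefore prove directly what the paper delegates. The paper's version buys brevity. Yours buys a self-contained argument, and the algebra turns out to be short.

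Several details need fixing. None is fatal, because steps~1--2 applied to the whole cubic already suffice.

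\textbf{Step 3.} It is false as stated. At $\Omega=0$ the spurious root is $\alpha_m/(\alpha_m-1)=(1-2\rho_\infty)/(2-\rho_\infty)$, which equals $\tfrac12$ for $\rho_\infty=0$. Only $|\lambda_3|<1$ holds, so drop that step.

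\textbf{The key identity.} It cannot be right, since $\gamma=\tfrac12-\alpha_m+\alpha_f$ makes $\alpha_f-\alpha_m+\tfrac12-\gamma$ vanish identically. With the Chung--Hulbert parameters, the characteristic equation after clearing denominators is
\[
(1+\rho_\infty)^2\bigl[(2-\rho_\infty)\lambda+2\rho_\infty-1\bigr](\lambda-1)^2+\Omega^2(\lambda+\rho_\infty)^3=0 .
\]
At $\lambda=1$ the left side is $\Omega^2(1+\rho_\infty)^3>0$, so $\lambda=1$ is never a root for $\Omega>0$.

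For $\lambda=e^{i\theta}$ with $\theta\in(0,2\pi)$, write $(\lambda-1)^2=-4\sin^2(\theta/2)\,\lambda$. A root then requires $\bar\lambda^{2}\bigl[(2-\rho_\infty)\lambda+2\rho_\infty-1\bigr](1+\rho_\infty\lambda)^3$ to be a positive real number. Its imaginary part is $-2(1-\rho_\infty)^3(1+\rho_\infty)\sin\theta\,(1-\cos\theta)$. This vanishes only at $\theta=\pi$, where the expression equals $-3(1-\rho_\infty)^4<0$.

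So no root lies on the unit circle when $\Omega>0$. The leading coefficient never vanishes, and all three roots tend to $-\rho_\infty$ as $\Omega\to\infty$. Continuity therefore places all roots in the open unit disk for every $\Omega>0$. The factor that carries the dissipation is $(1-\rho_\infty)^3(1+\rho_\infty)$.

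\textbf{The bifurcation worry.} It is moot: the unit-circle test already covers real roots at $\pm1$. By contrast, the signs of $p(\pm1)$ alone could not confine three real roots to $(-1,1)$.

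\textbf{Song's map.} Your coefficient check reduces to a single coefficient. Suppose $R_{\mathrm{Song}}=P/Q$ with $\deg P,\deg Q\le M$ matches $e^{z}$ through order $2M-1$. Then $E$ is even, of degree at most $2M$, and $O(\Omega^{2M})$, so
\[
E(\Omega)=(q_M^2-p_M^2)\,\Omega^{2M}=q_M^2(1-\rho_\infty^2)\,\Omega^{2M}.
\]
This is positive for $\Omega>0$ exactly when $\rho_\infty<1$. If instead $R_{\mathrm{Song}}$ is a convex combination of the two rational functions themselves, the triangle inequality gives the bound at once.
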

\begin{proof}
For generalized-$\alpha$, $\rho(\Omega)$ is a rational function of
$\Omega^2$ with $\rho(0)=1$ and $\rho(\Omega)\to\rho_\infty<1$ as
$\Omega\to\infty$~\citep{chunghulbert1993,hilberhughestaylor1977}.
It is below one for every $\Omega>0$ when $\rho_\infty<1$.
For $R_{\mathrm{Song}}$, the spectral radius tends to $\rho_\infty$ only
as $\Omega\to\infty$ and remains below one for every finite
$\Omega>0$ when $\rho_\infty<1$~\citep{song2022}.
\end{proof}

These methods therefore cannot selectively annihilate the outlier band. Any dissipation applied to the outliers also damps physical modes to some degree, reducing the spectral accuracy that the spline basis provides.

The next section introduces the Pad\'e approximants that form MIISO's selective damping mechanism.

\section{Pad\'e approximants to the exponential}
\label{sec:pade-chapter}

\subsection{Definition and order}
\label{sec:pade}

A classical Pad\'e approximant of type~\pade{m}{n} is a rational function
$P(z)/Q(z)$, where $P$ has degree~$m$, $Q$ has degree~$n$, and
$Q(0)=1$.
Applied to the exponential, the remaining coefficients are chosen so that
\begin{equation}
\frac{P(z)}{Q(z)}-e^{z}=O(z^{m+n+1}).
\end{equation}
The integer $m+n$ is the order of~\pade{m}{n}.
For given $m$ and~$n$ this match is unique, and no other rational function
of the same degrees attains a higher approximation order~\citep{hairerwanner1996}.
In what follows we call individual Pad\'e approximants \emph{branches},
and the ons that enter a blend \emph{parents}.
The constructions below use the diagonal case $m=n$ and the subdiagonal
case $m<n$.

\subsection{Amplification factors and \texorpdfstring{$A$-stability}{A-stability}}
\label{sec:amplification}

On the linear test equation $y'=\lambda y$, a one-step method with step size
$h>0$ advances by a scalar amplification factor $R(z)$,
\begin{equation}
y_{n+1}=R(z)\,y_n,\qquad z=h\lambda.
\end{equation}
The method is $A$-stable if $|R(z)|\le 1$ whenever $\mathrm{Re}(z)\le 0$,
and $L$-stable if it is $A$-stable and also satisfies $R(z)\to 0$ as $|z|\to\infty$ in
the left half-plane.

\begin{definition}[$A$-acceptability]
\label{def:A-acceptable}
A rational function~$R$ is \emph{$A$-acceptable} if $|R(z)|\le 1$
for every $z$ with $\mathrm{Re}(z)\le 0$.
\end{definition}

Ehle's criterion~\citep{hairerwanner1996} states that~\pade{m}{n} is
$A$-acceptable if and only if
\begin{equation}
\label{eq:ehle}
n-2 \leq m \leq n.
\end{equation}

\subsection{Subdiagonal and diagonal approximants}
\label{sec:gauss}

If $m<n$, the approximant~\pade{m}{n} is called subdiagonal.
It is the dissipative column of the Pad\'e table. The numerator degree is
less than the denominator degree, so $\pade{m}{n}(z)\to 0$ as
$|z|\to\infty$ in the left half-plane and high-frequency modes are driven
out over many steps.
Whenever it is $A$-acceptable, it satisfies
$\bigl|\pade{m}{n}(i\omega)\bigr|<1$ for all $\omega\neq 0$, so every
nontrivial oscillatory mode is dissipated to some extent.
By~\eqref{eq:ehle} this requires $n-m\le 2$.

The approximant~\pade{n}{n} is called a diagonal approximant.
It satisfies
\begin{equation}
\bigl|\pade{n}{n}(i\omega)\bigr|=1
\qquad\text{for all real }\omega,
\end{equation}
so a method with this amplification factor introduces no dissipation on
purely oscillatory modes.
The same function is the stability function of an $n$-stage
Gauss--Legendre fully implicit Runge--Kutta method~\citep{hairerwanner1996}.
We call~\pade{n}{n} the \emph{Gauss approximant} when it is used as an
amplification factor for the resolved band of the IGA system.

\subsection{High-frequency dissipation}
\label{sec:dissipation}

The rate at which~\pade{m}{n} decays to zero as $|z|\to\infty$ is set by
the excess of the denominator degree~$n$ over the numerator degree~$m$.
A larger excess means faster decay and stronger per-step suppression of
high-magnitude modes. The strongest decay occurs when $m=0$.

The column~\pade{0}{n} has constant numerator~$1$ and denominator equal to
the degree-$n$ truncation of the Taylor series of $e^{-z}$,
\begin{equation}
\pade{0}{n}(z)
=
\Biggl(\sum_{j=0}^{n}\frac{(-z)^{j}}{j!}\Biggr)^{-1}.
\end{equation}
Hence $\pade{0}{n}(z)=O(z^{-n})$ as $|z|\to\infty$.
The first two members are
\begin{equation}
\pade{0}{1}(z)=\frac{1}{1-z},
\qquad
\pade{0}{2}(z)=\frac{1}{1-z+\tfrac12 z^{2}},
\end{equation}
the amplification factors of backward Euler and an order-two analogue.

The column~\pade{1}{n} decays only as $O(z^{-(n-1)})$, weaker than the previously mentioned~\pade{0}{n}.
We thus use the dissipative column~\pade{0}{n} on the IGA outliers because its
decay is the strongest the Pad\'e table allows at a given order.
Each step multiplies an outlier eigenmode by~$R(z)$, and
after $s$ steps the retained amplitude is $|R(z)|^s$, so a small
per-step factor drives the mode out of the solution.
Section~\ref{sec:fade-numerics} records the resulting factors on the
outlier band (Table~\ref{tab:fade} and Figure~\ref{fig:withdrawal}).

The dissipative column~\pade{0}{n} meets~\eqref{eq:ehle} only for $n\le 2$.
Thus~\pade{0}{2} may be used as written, while~\pade{0}{3}
and~\pade{0}{4} cannot.
The next subsection repairs those two branches so that they become
$A$-acceptable and can be used on the IGA outliers.

\subsection{Repair of \texorpdfstring{$[0/3]$ and $[0/4]$}{[0/3] and [0/4]}}
\label{sec:wedge}

The approximants~\pade{0}{3} and~\pade{0}{4} fail $|R(z)|\le 1$ near the
origin.
Write
\begin{equation}
Q_N(z)=\sum_{j=0}^{N}\frac{(-z)^{j}}{j!},
\qquad
\pade{0}{N}(z)=\frac{1}{Q_N(z)}.
\end{equation}
Then $|\pade{0}{N}(z)|>1$ if and only if $|Q_N(z)|<1$.
On the imaginary axis the two members are
\begin{equation}
\begin{aligned}
|Q_3(i\omega)|^2-1
&=\frac{\omega^{4}}{36}\bigl(\omega^{2}-3\bigr),
\\
|Q_4(i\omega)|^2-1
&=\frac{\omega^{6}}{576}\bigl(\omega^{2}-8\bigr),
\end{aligned}
\label{eq:pade-axis}
\end{equation}
so $|\pade{0}{3}(i\omega)|>1$ for $0<|\omega|<\sqrt{3}$ and
$|\pade{0}{4}(i\omega)|>1$ for $0<|\omega|<\sqrt{8}$.
The maxima on that axis are
\begin{equation}
\begin{aligned}
\max_{\omega\in\R}\bigl|\pade{0}{3}(i\omega)\bigr|
&=\frac{3}{2\sqrt{2}}
\quad\text{at }\omega=\sqrt{2},
\\
\max_{\omega\in\R}\bigl|\pade{0}{4}(i\omega)\bigr|
&=2
\quad\text{at }\omega=\sqrt{6}.
\end{aligned}
\end{equation}

On the imaginary axis, a mode with $0<|h\lambda|<\sqrt{3}$
(for~\pade{0}{3}) or $0<|h\lambda|<\sqrt{8}$ (for~\pade{0}{4})
is amplified and thus unstable.
The repair replaces the column on the corresponding instability region
with a parent that is $A$-acceptable
and agrees with~$e^{z}$ through the same order as~\pade{0}{N}, then blends
to the original column outside it.
Define the instability set
\begin{equation}
\mathcal{W}_N
=
\bigl\{z\in\mathbb{C}:\mathrm{Re}(z)\le 0,\ |Q_N(z)|<1\bigr\}.
\end{equation}
For each imaginary part~$y$ with $|y|$ in the unstable range above,
$|Q_N(x+iy)|=1$ has a unique root $x_b(y)<0$, and $\mathcal{W}_N$ consists
of the points with $x_b(y)<\mathrm{Re}(z)\le 0$.
Numerically, $\min\{\mathrm{Re}(z):z\in\mathcal{W}_3\}\approx-0.040$ and
$\min\{\mathrm{Re}(z):z\in\mathcal{W}_4\}\approx-0.237$.

In that region, the column~\pade{0}{N} is replaced by a parent
$R_{\mathrm{fix}}$ that satisfies~\eqref{eq:ehle} and agrees with~$e^{z}$
through the same order as~\pade{0}{N}.
For $N=3$ take
\begin{equation}
\pade{1}{2}(z)
=
\frac{1+\tfrac13 z}{1-\tfrac23 z+\tfrac16 z^{2}},
\end{equation}
the stability function of the two-stage Radau~IIA
method, which we refer to as the Radau branch~\citep{hairerwanner1996}
(order three, $L$-stable).
For $N=4$ take~\pade{1}{3} (order four, $L$-stable).
Write $R_{\mathrm{raw}}=\pade{0}{N}$ for the original column.
Outside the instability region this column decays faster than the repair
($|\pade{0}{4}(20i)|\approx 1.5\times 10^{-4}$, while
$|\pade{1}{3}(20i)|\approx 1.5\times 10^{-2}$), so it is kept for the
strongest damping of the outliers.

The origin lies on the boundary of $\mathcal{W}_N$ because $|Q_N(0)|=1$, and the repair extends to it by continuity.
A transition pad of relative width $\delta=0.10$ around the wedge blends
from $R_{\mathrm{fix}}$ to $R_{\mathrm{raw}}$ so the map stays
continuous across $\partial\mathcal{W}_N$.
The outer edge of the pad is
$\mathrm{Re}(z)=(1+\delta)\,x_b(\mathrm{Im}(z))$ or
$|\mathrm{Im}(z)|=(1+\delta)\,\omega_*$, with $\omega_*=\sqrt{3}$ for
$N=3$ and $\omega_*=\sqrt{8}$ for $N=4$.
In the pad the map blends linearly from $R_{\mathrm{fix}}$ on the inner
edge to $R_{\mathrm{raw}}$ on the outer edge.
Outside the wedge and the pad, the map is $R_{\mathrm{raw}}$.

The resulting map $R_D$ is non-holomorphic and piecewise smooth.
Write $\theta_{\mathrm{wedge}}(z)$ for the corresponding weight, equal
to~$1$ inside the instability region and at the origin, strictly between $0$ and~$1$ in
the pad, and equal to~$0$ outside.
With $\theta=\theta_{\mathrm{wedge}}(z)$, the repaired map is
\begin{equation}
R_D(z)
=
|R_{\mathrm{fix}}(z)|^{\theta}\,|R_{\mathrm{raw}}(z)|^{1-\theta}\,
\exp\!\Bigl(
i\bigl[
(1-\theta)\arg R_{\mathrm{raw}}(z)
+
\theta\arg R_{\mathrm{fix}}(z)
\bigr]
\Bigr).
\label{eq:wedge-blend}
\end{equation}
The arguments are continuous branches with
$\arg R_{\mathrm{raw}}(0)=\arg R_{\mathrm{fix}}(0)=0$.

When $\theta_{\mathrm{wedge}}=1$, $R_D=R_{\mathrm{fix}}$, and
$R_{\mathrm{fix}}$ is $A$-acceptable.
When $0<\theta<1$, both parents have modulus at most one, so
$|R_D|\le 1$.
When $\theta_{\mathrm{wedge}}=0$, the point $z$ lies outside the pad,
whose outer edge is the boundary of $\mathcal{W}_N$ scaled by
$1+\delta$ with $\delta=0.10$, and there $R_{\mathrm{raw}}$ is
$A$-acceptable.
Thus $R_D$ is $A$-acceptable.
For large $|z|$, $\theta_{\mathrm{wedge}}=0$, so $R_D$
coincides with $R_{\mathrm{raw}}$ and $R_D(z)\to 0$.
Hence $R_D$ is also $L$-stable.
On the wedge the map is~$R_{\mathrm{fix}}$, so the decay is
$O(z^{-(N-1)})$ rather than the stronger $O(z^{-N})$, but that weaker decay is
confined to the small set $\mathcal{W}_N$ where~\pade{0}{N} is already
near one.
Figures~\ref{fig:pade03-repair} and~\ref{fig:pade04-repair} show
$R_{\mathrm{raw}}$, $R_{\mathrm{fix}}$, and $R_D$ for $N=3$ and $N=4$.

\begin{figure}[H]
\centering
\includegraphics[width=\textwidth]{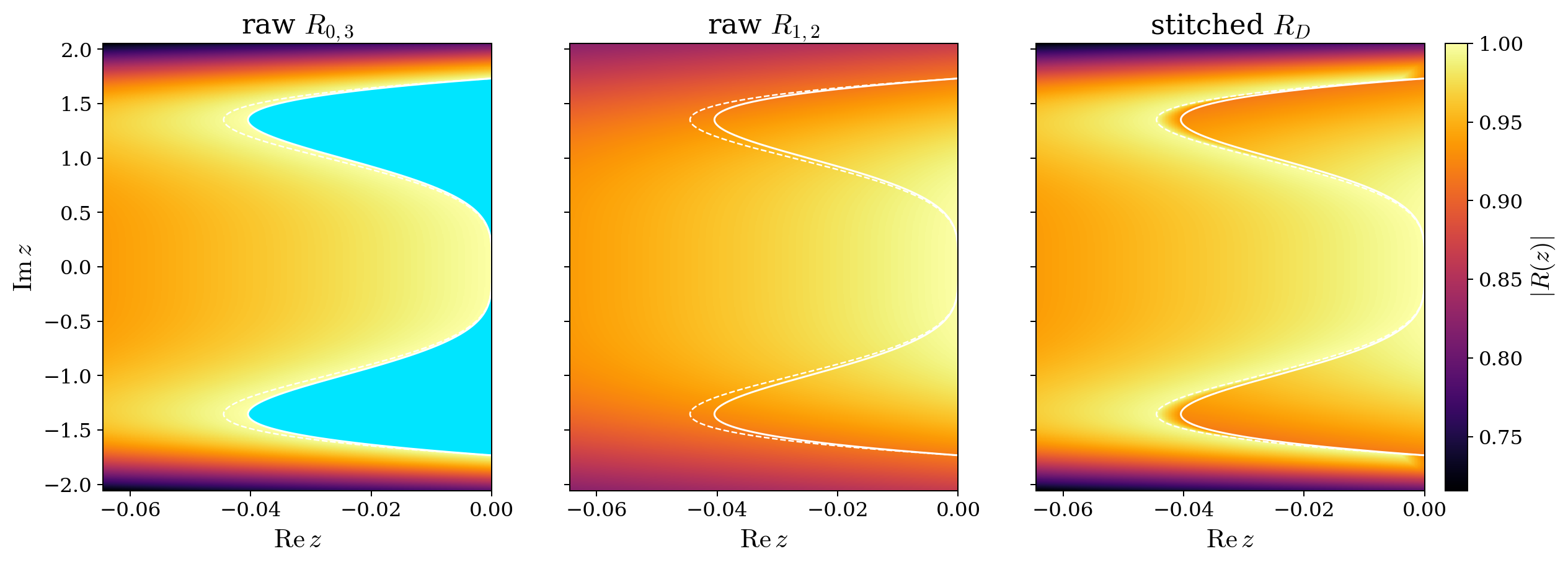}
\caption{Repair of~\pade{0}{3}.
Left panel shows $|\pade{0}{3}|$ with $\mathcal{W}_3$ shaded.
Center panel shows the repair parent~\pade{1}{2}.
Right panel shows the repaired map~$R_D$.
The solid curve is $|\pade{0}{3}|=1$ and the dashed curve is the outer edge of the pad.}
\label{fig:pade03-repair}
\end{figure}

\begin{figure}[H]
\centering
\includegraphics[width=\textwidth]{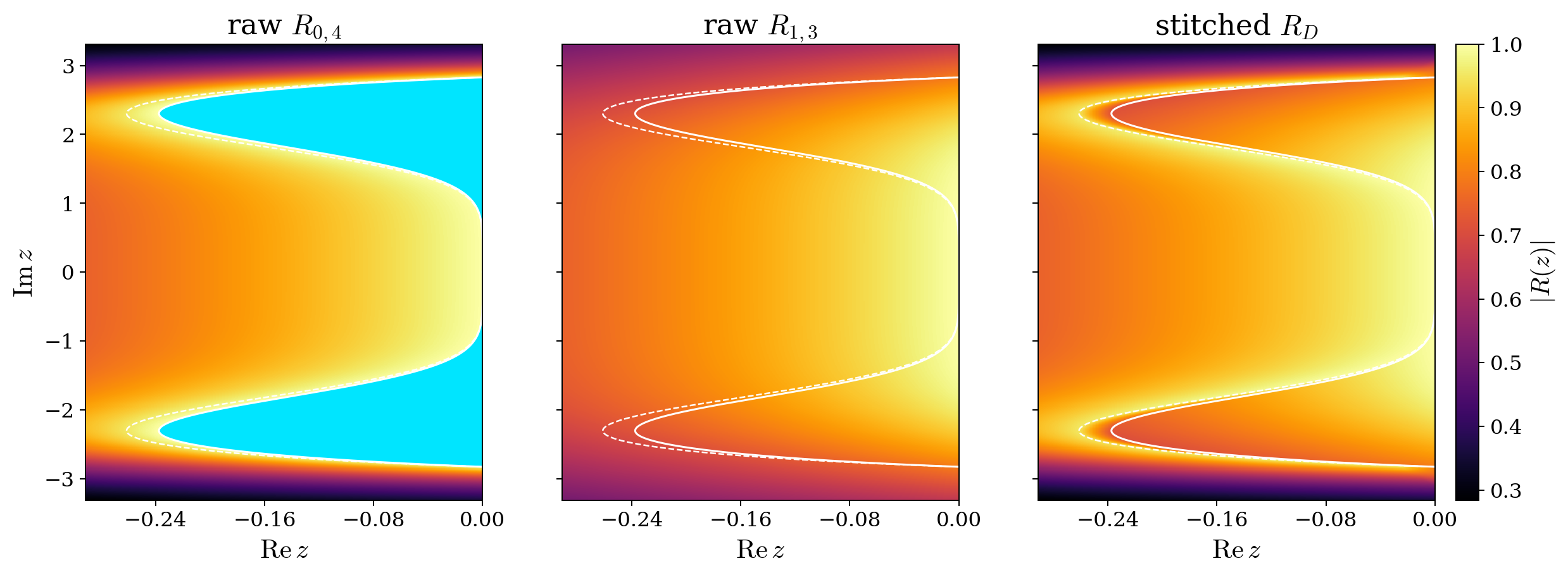}
\caption{Repair of~\pade{0}{4}, same layout as
Figure~\ref{fig:pade03-repair}, with repair parent~\pade{1}{3}.}
\label{fig:pade04-repair}
\end{figure}

The next section uses these approximants to construct a selective damping map on the outlier band and embeds it in an exponential Rosenbrock method.

\section{Construction of MIISO}
\label{sec:method}

\subsection{The amplification map}
\label{sec:blend}

Section~\ref{sec:outliers} locates the outlier band of~$hJ$ by the
cutoff~$\omega_c=|\lambda_k(hJ)|$ of~\eqref{eq:omegac}, the modulus of
the $k$-th largest-magnitude eigenvalue.
Let $z=h\lambda\in\mathbb{C}$ and $\omega=|z|$.
Write $R_C$ for a conservative (Gauss) parent and $R_D$ for a
dissipative parent from the column~\pade{0}{n}, repaired as in
Section~\ref{sec:wedge}.
Resolved modes are advanced by~$R_C$, which has modulus one on the
imaginary axis. Outliers are advanced by~$R_D$, which drives them out
over many steps. That pairing forms the selective damping of the method.
The parents for each member of the family are given in
Table~\ref{tab:parents}.

\begin{table}[H]
\centering
\caption{Parents used by each MIISO member.
$R_C$ is the $n$-stage Gauss--Legendre FIRK method. For MIISO3, $R_D^{\mathrm{repair}}$ is the
2-stage Radau~IIA FIRK method.}
\label{tab:parents}
\begin{tabular}{@{}llll@{}}
\toprule
Method & $R_C$ & $R_D^{\mathrm{raw}}$ & $R_D^{\mathrm{repair}}$ \\
\midrule
MIISO2 & \pade{1}{1} & \pade{0}{2} & --- \\
MIISO3 & \pade{2}{2} & \pade{0}{3} & \pade{1}{2} \\
MIISO4 & \pade{2}{2} & \pade{0}{4} & \pade{1}{3} \\
\bottomrule
\end{tabular}
\end{table}

The switch from~$R_C$ to~$R_D$ is placed at~$\omega_c$ by a taper.
$R$ moves continuously from the conservative parent well below the cutoff
to the dissipative parent above it, and the change sits in a narrow band
just below $\omega_c$.
Fix $r\in(0,1)$ and an integer $s>1$, and set $\omega_0=r\,\omega_c$.
The defaults are $r=0.99$ and $s=4$, so the transition occupies
$0.99\,\omega_c\le\omega\le\omega_c$.
The taper is
\begin{equation}
\theta(\omega)=
\begin{cases}
0, & \omega\le\omega_0,\\[2pt]
\displaystyle
\Bigl(\dfrac{\omega-\omega_0}{\omega_c-\omega_0}\Bigr)^{s},
& \omega_0<\omega<\omega_c,\\[6pt]
1, & \omega\ge\omega_c.
\end{cases}
\label{eq:taper}
\end{equation}
The blended amplification map is
\begin{equation}
R(z)
=
R(z,|z|)
=
|R_D(z)|^{\theta(|z|)}\,|R_C(z)|^{1-\theta(|z|)}\,
\exp\!\Bigl(
i\bigl[
\bigl(1-\theta(|z|)\bigr)\arg R_C(z)
+
\theta(|z|)\arg R_D(z)
\bigr]
\Bigr),
\label{eq:rblend}
\end{equation}
with $\theta$ from~\eqref{eq:taper}.
Because the weight depends on~$|z|$, $R$ is not holomorphic.
We still write~$R(z)$ for the amplification map, but with the understanding
that it depends on both~$z$ and~$|z|$.
On an eigenmode with $z=h\lambda$, each step multiplies by~$R(z)$.

Figure~\ref{fig:taper-amplification} shows the taper $\theta(\omega)$ and
the resulting imaginary-axis amplification factor $|R(i\omega)|$ for each
method order.

\begin{figure}[H]
  \centering
  \includegraphics[width=\textwidth]{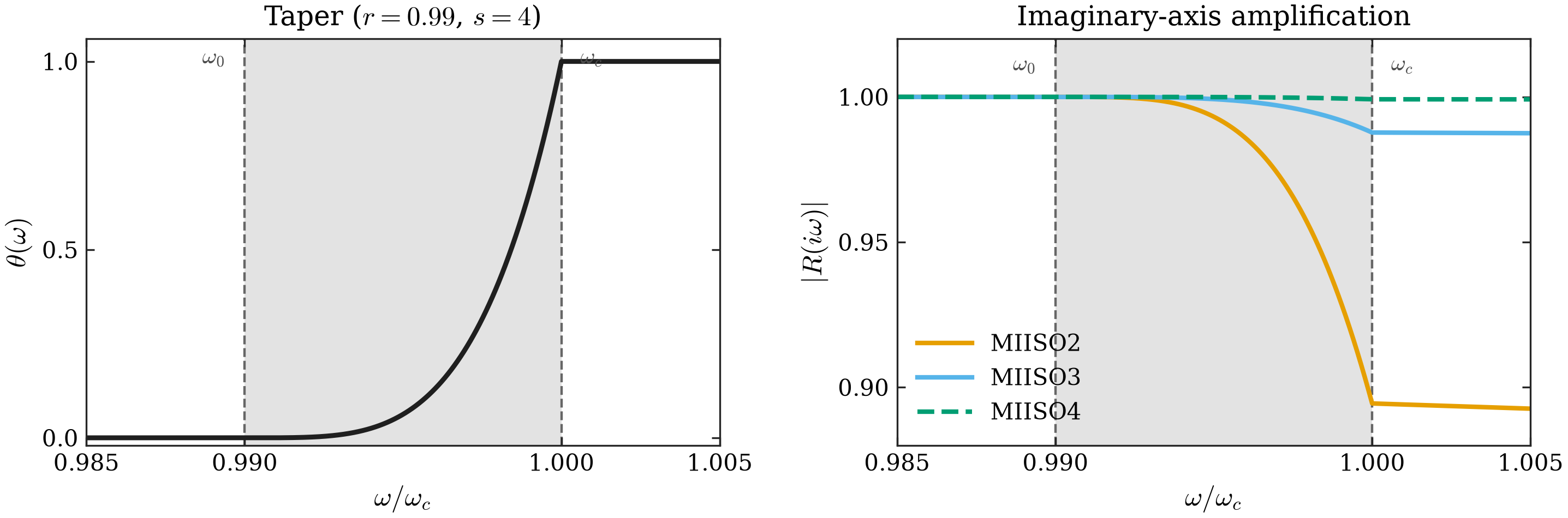}
  \caption{Taper $\theta(\omega)$ for $r=0.99$, $s=4$ (left) and
    imaginary-axis amplification $|R(i\omega)|$ for MIISO2--4 (right).}
  \label{fig:taper-amplification}
\end{figure}

Below $\omega_0$ the taper is zero and $R$ equals the conservative parent
$R_C$, a diagonal $[n/n]$ Pad\'{e} approximant whose imaginary-axis
modulus is exactly one.
Above $\omega_c$ the taper is one and $R$ equals the dissipative parent
$R_D$, whose modulus decreases to zero as $\omega\to\infty$.
In the transition band $[\omega_0,\omega_c]$ the two parents are blended by
the power-law weight $\theta$.

\begin{figure}[H]
\centering
\includegraphics[width=\textwidth]{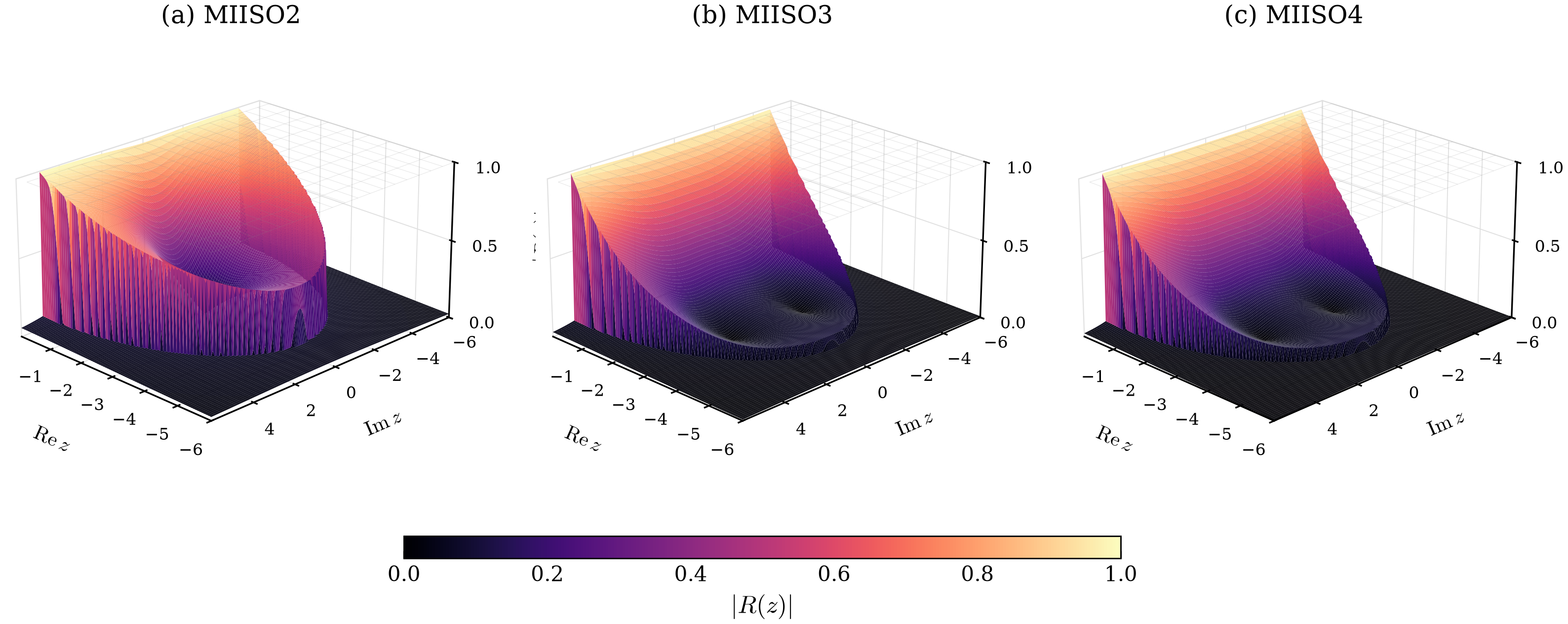}
\caption{$|R(z)|$ for MIISO2, MIISO3, and MIISO4 with $\omega_c$ from the
fixed--fixed degree-$4$ bar ($r=0.99$, vertical axis clipped at~$1$).}
\label{fig:stability-surfaces}
\end{figure}

\subsection{Modified \texorpdfstring{$\varphi$}{phi}-functions and
exponential Rosenbrock steps}
\label{sec:phi}

The map~$R$ of~\eqref{eq:rblend} is the amplification factor of a single
eigenmode.
After spatial discretization, a first-order system of
size~$N$ in the form of Section~\ref{sec:exprb},
$y'(t)=f(t,y(t))$ is obtained.
As in Section~\ref{sec:exprb}, these methods re-linearize about the current Jacobian and advance with
$\varphi$-functions of~$hJ$, generated from the exponential.

At time~$t_n$ and state~$y_n$ write $f_n:=f(t_n,y_n)$, $J_n$ for the
Jacobian $f_y(t_n,y_n)$, and $f_{t,n}$ for $f_t(t_n,y_n)$ when $f$
depends on~$t$ directly.
For a nearby state $y=y_n+\eta$, Taylor expansion gives
$f(t_n,y_n+\eta)=f_n+J_n\eta+r_n(\eta)$ with $r_n(\eta)=O(\|\eta\|^2)$.
Setting $\eta(t)=y(t_n+t)-y_n$ and retaining the explicit time derivative
through first order yields the frozen affine problem
\begin{equation}
\dot\eta=J_n\eta+f_n+t f_{t,n},\qquad \eta(0)=0,
\label{eq:frozen}
\end{equation}
up to the remainder~$r_n$.
Variation of constants gives the exact exponential Rosenbrock increment
\begin{equation}
\eta(h)=h\,\varphi_1(hJ_n)f_n+h^2\varphi_2(hJ_n)f_{t,n}.
\label{eq:frozen-sol}
\end{equation}
The exact functions satisfy $\varphi_0(z)=e^{z}$ and
\begin{equation}
\varphi_1(z)=\frac{e^{z}-1}{z},\qquad
\varphi_{j+1}(z)=\frac{\varphi_j(z)-1/j!}{z}.
\end{equation}
Replacing $e^{z}$ by~$R$ in the same recurrence gives $\phit_0(z)=R(z)$ and
\begin{equation}
\phit_1(z)=\frac{R(z)-1}{z},\qquad
\phit_{j+1}(z)=\frac{\phit_j(z)-1/j!}{z}.
\label{eq:phirec}
\end{equation}
The map $\phit_1(z)=(R(z)-1)/z$ is the discrete analogue of
$h\varphi_1(hJ)=J^{-1}(e^{hJ}-I)$ with $e^{hJ}$ replaced by~$R(hJ)$.
Near $z=0$ the recurrence divides by~$z$ and suffers cancellation in
floating point arithmetic, so the Taylor series values
$\phit_1\approx 1+z/2$, $\phit_2\approx 1/2+z/6$,
$\phit_3\approx 1/6+z/24$, and $\phit_4\approx 1/24+z/120$ are used
for $|z|<\varepsilon$.
Unlike the standard $\varphi$-functions, the maps~$\phit_j$ are not
holomorphic, so semigroup theory for exact exponential integrators no
longer applies.
Since every $\phit_j$ is generated from $R$, all stages of the methods
use the selective damping map.

The three members of the family are the exponential Rosenbrock schemes
of~\citet{hochbruckostermann2009} with $\varphi_j$ replaced by~$\phit_j$.
MIISO2 is the exponential Rosenbrock--Euler step.
MIISO3 follows exprb32, and MIISO4 follows exprb43.
The internal stages $Y_2$ and~$Y_3$ are evaluations of the frozen
linearization, and the defects $D_2$ and~$D_3$ measure the residual of
that linearization at those stages.
\begin{align}
\text{MIISO2:}&\quad
y_{n+1}=y_n+h\,\phit_1(hJ_n)f_n+\tfrac12 h^2 f_{t,n},
\label{eq:miiso2}\\[4pt]
\text{MIISO3:}&\quad
\begin{aligned}
v_1&=\phit_1(hJ_n)f_n,\quad
v_2=\phit_2(hJ_n)f_{t,n},\\
Y_2&=y_n+hv_1+h^2 v_2,\\
D_2&=f(t_n+h,Y_2)-f_n-J_n(Y_2-y_n)-h f_{t,n},\\
y_{n+1}&=y_n+hv_1+h^2 v_2+2h\,\phit_3(hJ_n)D_2,
\end{aligned}
\label{eq:miiso3}\\[4pt]
\text{MIISO4:}&\quad
\begin{aligned}
&v_1=\phit_1(hJ_n)f_n,\qquad
v_1^{1/2}=\phit_1(\tfrac12 hJ_n)f_n,\\
&v_2=\phit_2(hJ_n)f_{t,n},\qquad
v_2^{1/2}=\phit_2(\tfrac12 hJ_n)f_{t,n},\\
&Y_2=y_n+\tfrac12 h\,v_1^{1/2}+\tfrac14 h^2 v_2^{1/2},\\
&D_2=f(t_n+\tfrac12 h,Y_2)-f_n-J_n(Y_2-y_n)-\tfrac12 h f_{t,n},\\
&Y_3=y_n+h\,v_1+h^2 v_2,\\
&D_3=f(t_n+h,Y_3)-f_n-J_n(Y_3-y_n)-h f_{t,n},\\
&y_{n+1}=y_n+h\,\phit_1(hJ_n)f_n+h^2\phit_2(hJ_n)f_{t,n}\\
&\qquad\quad
+h\Bigl[(16\phit_3(hJ_n)-48\phit_4(hJ_n))D_2\\
&\hspace{48mm}
+(-2\phit_3(hJ_n)+12\phit_4(hJ_n))D_3\Bigr].
\end{aligned}
\label{eq:miiso4}
\end{align}
The coefficients $16$, $-48$, $-2$, and $12$ are those of exprb43.
In the linear case the defects vanish and the step reduces to
multiplication by~$R$, so each eigenmode with $z=h\lambda$ is advanced
by~$R(z)$.

\subsection{Matrix-free construction}
\label{sec:matrixfree}

The actions $\phit_j(hJ_n)v$ required by~\eqref{eq:miiso2}--\eqref{eq:miiso4}
are evaluated by the Arnoldi iteration process of
Section~\ref{sec:exprb}.
The same products of~$J_n$ with vectors produce the same orthonormal
basis $V_m\in\R^{N\times m}$ and the same $m\times m$ upper Hessenberg
matrix $H_m$, with $m\ll N$, so the only change from the standard
construction is that the Hessenberg evaluation is applied to the
modified $\phit_j$ function.
An $m$-step Arnoldi iteration on~$hJ_n$ started from a vector
$v\in\R^N$ still gives
\begin{equation}
\phit_j(hJ_n)\,v
\;\approx\;
\|v\|\,V_m\,\phit_j(H_m)\,e_1,
\label{eq:krylovphit}
\end{equation}
where $e_1=(1,0,\ldots,0)^\top\in\R^m$ is the first coordinate vector.

Because $R$ is not holomorphic, it cannot be evaluated on~$H_m$ as a
rational function of the matrix, since rational matrix functions reproduce
only holomorphic maps correctly.
Instead the Hessenberg matrix $H_m$ and its eigenvalues, the \emph{Ritz
values} of~$hJ_n$ on the Krylov subspace, are diagonalized and $\phit_j$ is applied to each
Ritz value as a scalar.
Writing $H_m=X\Lambda X^{-1}$ with
$\Lambda=\operatorname{diag}(\lambda_1,\ldots,\lambda_m)$,
\begin{equation}
\phit_j(H_m)
=
X\,\operatorname{diag}\bigl(\phit_j(\lambda_1),\ldots,\phit_j(\lambda_m)\bigr)
\,X^{-1}.
\label{eq:ritz}
\end{equation}

Higher-order methods reuse the Krylov basis, with
$\phit_j$ applied to the scaled Ritz values $c\lambda_\ell$ when a factor
$c\in\{1,\tfrac12\}$ appears in~\eqref{eq:miiso4}.
MIISO4 therefore obtains the half-step actions at no extra Arnoldi cost.
The number of distinct vectors sets the number of Arnoldi runs.
MIISO2 requires one run, on~$f_n$.
MIISO3 requires at most three, on $f_n$, $f_{t,n}$, and~$D_2$.
MIISO4 requires at most four, on $f_n$, $f_{t,n}$, $D_2$, and~$D_3$.

\subsection{Choice of the cutoff}
\label{sec:cutoff}

The blend of Section~\ref{sec:blend} is placed by the cutoff~$\omega_c$.
It may be prescribed by the user, in which case no eigensolve is needed
and the time-stepping is fully matrix-free.
When it is not prescribed, it must be computed from the assembled
spatial operators, which introduces a cost and a degree of assembly
dependence that the time-stepping algorithm itself does not need.
For linear problems a single computation at setup suffices.
For nonlinear or nonautonomous problems, $\omega_c$ must be updated at
each step as the operators evolve, and that per-step eigensolve is the
dominant non-matrix-free cost of the method.

When the semi-discrete system comes from an undamped structural problem
with symmetric positive definite stiffness and mass matrices $(K,M)$, the cutoff is
\begin{equation}
\omega_c
=
h\sqrt{\lambda_{N-k+1}(K,M)},
\label{eq:omegac-spd}
\end{equation}
where $\lambda_1\le\cdots\le\lambda_N$ are the generalized eigenvalues of $(K,M)$.
This eigenvalue is obtained by a shift-and-invert Lanczos iteration on
the pencil $(K,M)$, targeting a single eigenvalue at the top of the
resolved band. This shift-and-invert is warm started from the previous iteration for an initial guess if updating the cutoff every step.
The assembled $K$ and $M$ are required, but the cost is the above shift-and-invert, which is significantly cheaper than
a full eigendecomposition.
In the general case, $\omega_c=|\lambda_k(hJ_n)|$ as in~\eqref{eq:omegac},
obtained by a partial eigensolve of~$hJ_n$ for its $k$ largest-magnitude
eigenvalues.
This requires an assembled $J_n$ and is the more expensive path,
particularly on nonlinear problems where $J_n$ is reassembled each step.
An assembly-free proxy for~$\omega_c$ that avoids assembly dependence and can be cheaper is
deferred to future work.
Algorithms~\ref{alg:miiso2}--\ref{alg:miiso4} evaluate $\omega_c$ from
the live $(K,M)$ at each step when no fixed value is supplied.

\subsection{Matrix-free algorithms}
\label{sec:algs}

The constructions of this section are written as matrix-free steps in
Algorithms~\ref{alg:miiso2}--\ref{alg:miiso4}.
Given a vector~$v$, orders~$k_j$, and scalings $c_j\in\{1,\tfrac12\}$,
one Arnoldi factorization of~$hJ_n$ supplies every requested action on
that vector,
\begin{equation}
\mathrm{ApplyPhi}\bigl(v;\{(k_j,c_j)\}_j\bigr)
=
\bigl\{\|v\|\,V_m\,\phit_{k_j}(c_j H_m)\,e_1\bigr\}_j,
\label{eq:applyphi}
\end{equation}
with $V_m$, $H_m$ from~\eqref{eq:krylovphit} and $\phit_{k_j}(c_j H_m)$
evaluated on the Ritz values as in~\eqref{eq:ritz}.
A relative threshold~$\tau$ omits a defect action when the defect is
already smaller than $\tau(1+\|f_n\|)$.
The cutoff used below is chosen as in Section~\ref{sec:cutoff}.

\begin{algorithm}[H]
\caption{One MIISO2 step}
\label{alg:miiso2}
\begin{algorithmic}[1]
\Require $f$, $t_n$, $y_n$, $h$, Krylov dimension~$m$; cutoff via
$\omega_c$, or outlier count~$k$ with SPD pencil $(K,M)$, or~$k$
with~$J$ alone
\State $f_n\leftarrow f(t_n,y_n)$;\;
$J_n\leftarrow f_y(t_n,y_n)$;\;
$f_{t,n}\leftarrow f_t(t_n,y_n)$
\If{$\omega_c$ supplied}
\State keep $\omega_c$
\Comment{fixed cutoff; no eigensolve}
\ElsIf{SPD pencil $(K,M)$ supplied}
\State $\omega_c\leftarrow h\sqrt{\lambda_{N-k+1}(K,M)}$
\Comment{cutoff from $(K,M)$ at the current state}
\Else
\State $\omega_c\leftarrow\bigl|\lambda_k(hJ_n)\bigr|$
\Comment{largest-magnitude eigensolve of $hJ$ each step}
\EndIf
\State $\omega_0\leftarrow r\,\omega_c$
\Comment{default $r=0.99$, taper exponent~$s=4$}
\State $v_1\leftarrow\mathrm{ApplyPhi}(f_n;(1,1))$
\Comment{Arnoldi + Ritz map of $\phit_1$}
\State \Return $y_{n+1}\leftarrow y_n+h\,v_1+\tfrac12 h^2 f_{t,n}$
\end{algorithmic}
\end{algorithm}

\begin{algorithm}[H]
\caption{One MIISO3 step (exprb32 with the selective $\phit_k$)}
\label{alg:miiso3}
\begin{algorithmic}[1]
\Require $f$, $t_n$, $y_n$, $h$, Krylov dimension~$m$,
threshold~$\tau$; cutoff as in Algorithm~\ref{alg:miiso2}
\State $f_n\leftarrow f(t_n,y_n)$;\;
$J_n\leftarrow f_y(t_n,y_n)$;\;
$f_{t,n}\leftarrow f_t(t_n,y_n)$
\State Set $\omega_c$ and $\omega_0\leftarrow r\,\omega_c$ as in
Algorithm~\ref{alg:miiso2}
\State $v_1\leftarrow\mathrm{ApplyPhi}(f_n;(1,1))$
\Comment{Arnoldi 1: principal action}
\If{$\|f_{t,n}\|>0$}
\State $v_2\leftarrow\mathrm{ApplyPhi}(f_{t,n};(2,1))$
\Comment{Arnoldi 2: nonautonomous term}
\Else
\State $v_2\leftarrow 0$
\EndIf
\State $Y_2\leftarrow y_n+h\,v_1+h^2 v_2$
\Comment{internal stage at $c=1$}
\State $D_2\leftarrow f(t_n+h,Y_2)-f_n-J_n(Y_2-y_n)-h f_{t,n}$
\Comment{linearization defect}
\If{$\|D_2\|>\tau\,(1+\|f_n\|)$}
\State $v_3\leftarrow\mathrm{ApplyPhi}(D_2;(3,1))$
\Comment{Arnoldi 3: defect correction}
\Else
\State $v_3\leftarrow 0$
\EndIf
\State \Return $y_{n+1}\leftarrow y_n+h\,v_1+h^2 v_2+2h\,v_3$
\end{algorithmic}
\end{algorithm}

\begin{algorithm}[H]
\caption{One MIISO4 step (exprb43 with the selective $\phit_k$)}
\label{alg:miiso4}
\begin{algorithmic}[1]
\Require $f$, $t_n$, $y_n$, $h$, Krylov dimension~$m$,
threshold~$\tau$; cutoff as in Algorithm~\ref{alg:miiso2}
\State $f_n\leftarrow f(t_n,y_n)$;\;
$J_n\leftarrow f_y(t_n,y_n)$;\;
$f_{t,n}\leftarrow f_t(t_n,y_n)$
\State Set $\omega_c$ and $\omega_0\leftarrow r\,\omega_c$ as in
Algorithm~\ref{alg:miiso2}
\State $\bigl(v_1,\,v_1^{1/2}\bigr)\leftarrow\mathrm{ApplyPhi}\bigl(f_n;(1,1),(1,\tfrac12)\bigr)$
\Comment{Arnoldi 1: both scalings}
\If{$\|f_{t,n}\|>0$}
\State $\bigl(v_2,\,v_2^{1/2}\bigr)\leftarrow\mathrm{ApplyPhi}\bigl(f_{t,n};(2,1),(2,\tfrac12)\bigr)$
\Comment{Arnoldi 2}
\Else
\State $\bigl(v_2,\,v_2^{1/2}\bigr)\leftarrow(0,0)$
\EndIf
\State $Y_2\leftarrow y_n+\tfrac12 h\,v_1^{1/2}+\tfrac14 h^2 v_2^{1/2}$
\Comment{stage at $c=\tfrac12$}
\State $D_2\leftarrow f(t_n+\tfrac12 h,Y_2)-f_n-J_n(Y_2-y_n)-\tfrac12 h f_{t,n}$
\State $Y_3\leftarrow y_n+h\,v_1+h^2 v_2$
\Comment{stage at $c=1$}
\State $D_3\leftarrow f(t_n+h,Y_3)-f_n-J_n(Y_3-y_n)-h f_{t,n}$
\If{$\|D_2\|>\tau\,(1+\|f_n\|)$}
\State $(a_3,a_4)\leftarrow\mathrm{ApplyPhi}\bigl(D_2;(3,1),(4,1)\bigr)$
\Comment{Arnoldi 3}
\Else
\State $(a_3,a_4)\leftarrow(0,0)$
\EndIf
\If{$\|D_3\|>\tau\,(1+\|f_n\|)$}
\State $(b_3,b_4)\leftarrow\mathrm{ApplyPhi}\bigl(D_3;(3,1),(4,1)\bigr)$
\Comment{Arnoldi 4}
\Else
\State $(b_3,b_4)\leftarrow(0,0)$
\EndIf
\State $C\leftarrow(16a_3-48a_4)+(-2b_3+12b_4)$
\Comment{exprb43 weights}
\State \Return $y_{n+1}\leftarrow y_n+h\,v_1+h^2 v_2+h\,C$
\end{algorithmic}
\end{algorithm}

\subsection{Work and storage}
\label{sec:work}

An $m$-step Arnoldi action on a vector costs $O(mC_J+Nm^2+m^3)$ and
stores $O(Nm+m^2)$, with $C_J$ the cost of one Jacobian action.
For sparse IGA operators and a prefactored mass, $C_J$ is approximately
linear in~$N$.
The $m^3$ eigendecomposition of~$H_m$ is negligible at the Krylov dimensions ($m=10-60$) used below.
A live SPD cutoff adds one warm-started shift-invert of the structural
pencil each step.
Without assembled $(K,M)$, the fallback is a partial eigensolve of~$hJ$.
Section~\ref{sec:cost} quantifies the cost of the MIISO methods versus generalized-$\alpha$ methods on a nonlinear problem.

\section{Theoretical properties}
\label{sec:theory}

The constructions of Section~\ref{sec:method} determine a scalar
non-holomorphic map~$R$ and a family of modified Rosenbrock steps.
Throughout, $R_C$ and $R_D$ are the parents of Table~\ref{tab:parents},
$R$ is the blend~\eqref{eq:rblend}, $\omega=|z|$, and
$\theta=\theta(\omega)$ is the taper~\eqref{eq:taper}.

\subsection{\texorpdfstring{$A$-stability}{A-stability} of MIISO}
\label{sec:astab}

$A$-acceptability of a scalar amplification map and $L$-stability were
recalled in Section~\ref{sec:amplification}.
If $A$ and $B$ are complex-valued functions on
$\{\mathrm{Re}(z)\le 0\}$ with $|A(z)|\le 1$ and $|B(z)|\le 1$, then
for every $\vartheta\in[0,1]$,
\begin{equation}
|B(z)|^{\vartheta}\,|A(z)|^{1-\vartheta}\le 1.
\label{eq:geom-mean}
\end{equation}

\begin{proposition}[$A$-acceptability of the repaired dissipative parent]
\label{prop:wedge-stability}
Let $N\in\{3,4\}$ and let $\mathcal{W}_N$ be the instability set of
Section~\ref{sec:wedge}.
Let $R_D$ be the repaired map~\eqref{eq:wedge-blend}, with
$R_{\mathrm{raw}}=\pade{0}{N}$, $R_{\mathrm{fix}}=\pade{1}{N-1}$, and
weight $\theta_{\mathrm{wedge}}$ equal to~$1$ on $\mathcal{W}_N$,
equal to~$0$ only where $|\pade{0}{N}|\le 1$, and strictly between $0$
and~$1$ only where both parents have modulus at most one.
Then $|R_D(z)|\le 1$ for all $\mathrm{Re}(z)\le 0$, and
$R_D(z)\to 0$ as $|z|\to\infty$ in the left half-plane.
\end{proposition}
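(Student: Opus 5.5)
The plan is a pointwise case split on the value of the weight $\theta=\theta_{\mathrm{wedge}}(z)$ at a fixed $z$ with $\mathrm{Re}(z)\le 0$. The key observation is that the modulus of~\eqref{eq:wedge-blend} is $|R_D(z)|=|R_{\mathrm{fix}}(z)|^{\theta}\,|R_{\mathrm{raw}}(z)|^{1-\theta}$. The phase factor $\exp(i[\cdots])$ has modulus one for any real choice of continuous arguments, so the arguments play no role in the bound. This reduces everything to the geometric-mean inequality~\eqref{eq:geom-mean}, applied with $A=R_{\mathrm{raw}}$, $B=R_{\mathrm{fix}}$ and $\vartheta=\theta$, once we know both factors are at most one wherever they carry positive weight.

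The cases are as follows.

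\begin{itemize}
\item[(i)] If $\theta=1$, then $|R_D|=|R_{\mathrm{fix}}|=|\pade{1}{N-1}(z)|$. Since $N-1-2\le 1\le N-1$ for $N\in\{3,4\}$, Ehle's criterion~\eqref{eq:ehle} shows that $\pade{1}{N-1}$ is $A$-acceptable, so $|R_D|\le 1$.
\item[(ii)] If $\theta=0$, then $|R_D|=|\pade{0}{N}(z)|$, and by hypothesis $\theta=0$ only where this is at most one.
\item[(iii)] If $0<\theta<1$, then by hypothesis both $|R_{\mathrm{fix}}(z)|\le1$ and $|R_{\mathrm{raw}}(z)|\le 1$, so~\eqref{eq:geom-mean} gives $|R_D(z)|\le 1$.
\end{itemize}

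One small point needs care: the factor $|R_{\mathrm{raw}}|^{1-\theta}$ requires $R_{\mathrm{raw}}$ to be finite, i.e.\ $Q_N(z)\neq 0$, wherever $\theta<1$. This follows in cases (ii) and (iii) from $|R_{\mathrm{raw}}|\le 1$. At the origin, $\theta=1$ and both parents equal $1$, so the origin falls under case (i). Poles of $\pade{1}{N-1}$ lie in the right half-plane by $A$-acceptability, so $R_{\mathrm{fix}}$ is finite throughout $\mathrm{Re}(z)\le 0$.

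For the limit, first argue that the support of $\theta$ is bounded. The wedge $\mathcal{W}_N$ lies in $\{|\mathrm{Im}\,z|\le\omega_*,\ x_b(\mathrm{Im}\,z)<\mathrm{Re}\,z\le 0\}$, and the pad is contained in its $(1+\delta)$-dilation. Both sets are therefore bounded, because $|x_b|$ is bounded (by the numerical minima $\approx-0.040$ and $-0.237$, or more robustly because $|Q_N(z)|\to\infty$ as $|z|\to\infty$, so $\{|Q_N|\le 1\}$ is compact). Hence there is $\rho>0$ with $\theta(z)=0$ for $|z|>\rho$. On that region $R_D=\pade{0}{N}=1/Q_N$, and $|Q_N(z)|\ge |z|^N/N!-\sum_{j<N}|z|^j/j!\to\infty$, so $R_D(z)=O(|z|^{-N})\to 0$.

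The main obstacle is not the inequality, which is immediate, but checking that the geometric description of the pad really delivers the hypotheses on $\theta$. The stated proposition assumes them, so the proof will invoke them directly. If pressed, I would remark that showing that the dilated region $\{(1+\delta)x_b<\mathrm{Re}\,z\}$ still satisfies $|Q_N|\ge 1$ outside $\mathcal{W}_N$ needs the uniqueness of the root $x_b(y)$ and the monotonicity of $|Q_N(x+iy)|$ in $x$ for $x<x_b(y)$. That part is delegated to the hypothesis.
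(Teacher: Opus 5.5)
Your proposal is correct and follows the paper's proof essentially step for step: the same case split on whether $\theta_{\mathrm{wedge}}$ equals $1$, equals $0$, or lies strictly between, using Ehle's criterion for $\pade{1}{N-1}$, the hypothesis on where the weight vanishes, and the geometric-mean bound~\eqref{eq:geom-mean}, followed by $\theta_{\mathrm{wedge}}=0$ for large $|z|$ so that $R_D=\pade{0}{N}\to 0$. Your added checks (finiteness of both parents, the origin, and bounded support of the weight via compactness of $\{|Q_N|\le 1\}$) fill in what the paper leaves implicit. Note also that $|R_D|\le\max(|R_{\mathrm{fix}}|,|R_{\mathrm{raw}}|)$, so the limit follows from the decay of both parents even without knowing that the weight has bounded support.
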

\begin{proof}
When $\theta_{\mathrm{wedge}}=1$, $R_D=R_{\mathrm{fix}}=\pade{1}{N-1}$.
This parent meets~\eqref{eq:ehle} for $N\in\{3,4\}$, hence
$|R_{\mathrm{fix}}|\le 1$ on $\{\mathrm{Re}(z)\le 0\}$.
When $\theta_{\mathrm{wedge}}=0$, $R_{\mathrm{raw}}=\pade{0}{N}$ is used, and
$|\pade{0}{N}|\le 1$ by construction of the weight.
When $0<\theta_{\mathrm{wedge}}<1$, both parents have modulus at most one,
so~\eqref{eq:geom-mean} gives $|R_D|\le 1$.
For large $|z|$, $\theta_{\mathrm{wedge}}=0$, so $R_D=\pade{0}{N}$ and
$R_D(z)\to 0$.
\end{proof}

\begin{lemma}[$A$-acceptability of the outer blend]
\label{lem:scalar-stability}
Let $R_C$ and $R_D$ be $A$-acceptable, and suppose $|R_C(i\omega)|=1$ for
all real~$\omega$.
Then for every $z$ with $\mathrm{Re}(z)\le 0$ and every
$\theta\in[0,1]$,
\[
|R(z)|
=
|R_D(z)|^{\theta}\,|R_C(z)|^{1-\theta}
\le 1.
\]
If $\theta=0$, then $|R(i\omega)|=1$.
If $\theta=1$ and $R_D$ is $L$-stable, then $R(z)\to 0$ as
$|z|\to\infty$ in the left half-plane.
\end{lemma}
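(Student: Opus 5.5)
The plan is to read the statement directly off the polar form of the blend~\eqref{eq:rblend}. Everything reduces to taking moduli and applying the geometric-mean bound~\eqref{eq:geom-mean}.

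\textbf{Modulus identity and the bound.} Fix $z$ with $\mathrm{Re}(z)\le 0$ and write $\theta=\theta(|z|)\in[0,1]$. In~\eqref{eq:rblend} the exponential factor has a purely imaginary argument, $i\bigl[(1-\theta)\arg R_C+\theta\arg R_D\bigr]$, because the arguments are real continuous branches. Its modulus is therefore one, which gives the identity $|R(z)|=|R_D(z)|^{\theta}|R_C(z)|^{1-\theta}$ stated in the lemma. Since $R_C$ and $R_D$ are $A$-acceptable, $|R_C(z)|\le 1$ and $|R_D(z)|\le 1$, and~\eqref{eq:geom-mean} with $A=R_C$, $B=R_D$, $\vartheta=\theta$ yields $|R(z)|\le 1$. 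For the case $\theta=1$, $R_D$ may be the repaired map, and its $A$-acceptability is exactly Proposition~\ref{prop:wedge-stability}.

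\textbf{The case $\theta=0$.} Setting $\theta=0$ gives $|R(i\omega)|=|R_C(i\omega)|=1$ by hypothesis. The diagonal parents in Table~\ref{tab:parents} satisfy this by Section~\ref{sec:gauss}.

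\textbf{The case $\theta=1$ and decay.} Setting $\theta=1$ gives $R(z)=R_D(z)$, because the phase reduces to $\arg R_D$ and the modulus to $|R_D|$. If $R_D$ is $L$-stable, then $R(z)=R_D(z)\to 0$.

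The only delicate point is the conventions at zeros of the parents, where $|R_C|^{1-\theta}$ or $\arg$ might be undefined. I would handle these by adopting $0^0=1$ and noting that zeros of $R_C$ and $R_D$ in the closed left half-plane give $|R|=0\le 1$ whenever the corresponding exponent is positive. Otherwise the factor drops out.

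A second point concerns the decay claim. It should be read along the set where $\theta=1$, which contains all $|z|\ge\omega_c$ by~\eqref{eq:taper}. So for $|z|\to\infty$ the taper is identically one, and $R=R_D$ holds on the whole tail, not just pointwise.
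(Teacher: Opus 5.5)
Your proposal is correct and follows essentially the same route as the paper: take moduli in the polar form of the blend, apply the geometric-mean bound for $|R|\le 1$, and read off the cases $\theta=0$ (giving $R=R_C$) and $\theta=1$ (giving $R=R_D$) directly. Your extra remarks are sensible refinements that the paper leaves implicit: the convention at zeros of the parents, and reading the decay claim along the tail $|z|\ge\omega_c$, where $\theta\equiv 1$.
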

\begin{proof}
When $\theta=0$, $R=R_C$ by~\eqref{eq:rblend}, and
$|R_C(i\omega)|=1$ on the imaginary axis.
When $\theta=1$, $R=R_D$.
The modulus bound for all $\theta\in[0,1]$ follows from
~\eqref{eq:rblend} and~\eqref{eq:geom-mean}.
\end{proof}

\begin{theorem}[Unconditional $A$-stability of MIISO]
\label{thm:A-stable}
For each of MIISO2, MIISO3, and MIISO4, the scalar amplification
map~$R$ of~\eqref{eq:rblend} satisfies $|R(z)|\le 1$ for all
$\mathrm{Re}(z)\le 0$, independently of the step size~$h>0$.
Hence every member is unconditionally $A$-stable for the linear test
equation $y'=\lambda y$.
\end{theorem}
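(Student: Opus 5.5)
The plan is to reduce the theorem to Lemma~\ref{lem:scalar-stability} by checking its hypotheses for each member's parents, and then to note that the linear step is exactly multiplication by~$R$. For every $z$ with $\mathrm{Re}(z)\le 0$, the weight $\theta(|z|)$ from~\eqref{eq:taper} lies in $[0,1]$. This holds because in the transition band the base $(\omega-\omega_0)/(\omega_c-\omega_0)$ lies in $(0,1)$, so its $s$-th power does too. The modulus of~\eqref{eq:rblend} is $|R_D(z)|^{\theta}|R_C(z)|^{1-\theta}$, since the exponential factor has unit modulus whatever continuous branches of the arguments are used. So it suffices to show that $|R_C|\le 1$ and $|R_D|\le 1$ on the closed left half-plane, and then apply~\eqref{eq:geom-mean} pointwise with $\vartheta=\theta(|z|)$.

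For $R_C$ the parents are \pade{1}{1} and \pade{2}{2}. These are diagonal, so they satisfy Ehle's criterion~\eqref{eq:ehle} with $m=n$ and are $A$-acceptable. They also have modulus one on the imaginary axis, as recorded in Section~\ref{sec:gauss}. For $R_D$ the cases split by order. For MIISO2, $R_D=\pade{0}{2}$ satisfies~\eqref{eq:ehle} with $n-m=2$. For MIISO3 and MIISO4, $R_D$ is the repaired map~\eqref{eq:wedge-blend}, and Proposition~\ref{prop:wedge-stability} gives $|R_D|\le 1$ on $\mathrm{Re}(z)\le 0$. I will check that the construction of Section~\ref{sec:wedge} meets that proposition's hypotheses on $\theta_{\mathrm{wedge}}$, namely:
\begin{itemize}
\item $\theta_{\mathrm{wedge}}=1$ on $\mathcal{W}_N$;
\item $\theta_{\mathrm{wedge}}=0$ only where $|Q_N|\ge 1$, i.e.\ outside $\mathcal{W}_N$;
\item $\theta_{\mathrm{wedge}}$ is fractional only in the pad, which lies outside $\mathcal{W}_N$, so there $|\pade{0}{N}|\le 1$ while $|\pade{1}{N-1}|\le 1$ everywhere.
\end{itemize}
Lemma~\ref{lem:scalar-stability} then yields $|R(z)|\le 1$ for all $\mathrm{Re}(z)\le 0$.

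Independence from $h$ follows because $h$ enters only through $\omega_c$, and hence through $\omega_0=r\omega_c$. Neither affects the bound: the argument holds for every admissible taper value, i.e.\ for any $\omega_c>0$. Finally, for $y'=\lambda y$ we have $f_t=0$, $J_n=\lambda$ and vanishing defects. So each of~\eqref{eq:miiso2}--\eqref{eq:miiso4} reduces to $y_{n+1}=y_n+z\,\phit_1(z)y_n=R(z)y_n$ by~\eqref{eq:phirec}, and $|R(z)|\le1$ is exactly $A$-stability.

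The main obstacle is the pad region of the repair. There one must verify that the pad, with outer edge scaled by $1+\delta$, really lies where $|\pade{0}{N}|\le 1$, so that Proposition~\ref{prop:wedge-stability} applies. The inner boundary of the pad is $\partial\mathcal{W}_N$, and on the left half-plane $\mathcal{W}_N$ is exactly the region $x_b(y)<x\le 0$. Points of the pad therefore have either $x\le x_b(y)$ or $|y|\ge\omega_*$, and in both cases they lie outside $\mathcal{W}_N$. I would make this explicit using the uniqueness of the root $x_b(y)$ stated in Section~\ref{sec:wedge}.
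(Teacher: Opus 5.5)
Your proposal is correct and follows the same route as the paper. You reduce to Lemma~\ref{lem:scalar-stability} by noting three facts: the Gauss parents \pade{1}{1} and \pade{2}{2} are $A$-acceptable with unit modulus on the imaginary axis, \pade{0}{2} satisfies Ehle's criterion, and the repaired $R_D$ of MIISO3 and MIISO4 is $A$-acceptable by Proposition~\ref{prop:wedge-stability}. Your additional checks ($\theta\in[0,1]$, the unit-modulus phase factor, the pad lying outside $\mathcal{W}_N$, the $h$-independence, and the reduction $y_{n+1}=R(h\lambda)\,y_n$) spell out what the paper leaves to Section~\ref{sec:wedge} and Theorem~\ref{thm:linear-reduction}.
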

\begin{proof}
For MIISO2 both parents meet~\eqref{eq:ehle}, $\pade{1}{1}$ has
modulus one on the imaginary axis, and $\pade{0}{2}$ is $L$-stable, so
Lemma~\ref{lem:scalar-stability} applies.
For MIISO3 and MIISO4 the conservative parent is the Gauss
map~$\pade{2}{2}$, which is $A$-acceptable with modulus one on the imaginary
axis, and the dissipative parent is $A$-acceptable by
Proposition~\ref{prop:wedge-stability}.
Lemma~\ref{lem:scalar-stability} applies again.
\end{proof}

The dissipative branches of MIISO3 and MIISO4 are repaired inside
their instability wedge so that they become $A$-acceptable
(Proposition~\ref{prop:wedge-stability}). The resulting blend of
conservative and dissipative branches are done in a way that makes every member of the family $A$-stable (Theorem~\ref{thm:A-stable}).
The conservative branches preserve unit amplification on resolved
modes, while the dissipative branches are $L$-stable on the outliers.

\subsection{Dissipation of MIISO}
\label{sec:dissipation-blend}

The blend of Section~\ref{sec:blend} applies selective damping through
the taper~\eqref{eq:taper}.
Write $\omega=|z|$.
Below $\omega_0$, $\theta=0$ and $R=R_C$, above $\omega_c$,
$\theta=1$ and $R=R_D$, and between those values $R$ is the blend of the two
parents.

\begin{proposition}[Exact non-dissipation below the taper]
\label{prop:zero-dissipation}
If $\omega\le\omega_0$, then $|R(i\omega)|=1$.
\end{proposition}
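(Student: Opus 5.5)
The plan is to reduce the statement to the imaginary-axis property of the conservative parent. First I would evaluate the taper~\eqref{eq:taper} at $z=i\omega$. Since $|i\omega|=|\omega|$, the hypothesis $\omega\le\omega_0$ should be read with $\omega$ standing for the modulus $|z|$, as in Section~\ref{sec:blend}. If $\omega$ is allowed to be negative, the case $\omega<0$ is handled by $|i\omega|=|{-\omega}|$: one restricts to $|\omega|\le\omega_0$, or notes that the diagonal Pad\'e approximants have real coefficients, so $|R_C(-i\omega)|=|\overline{R_C(i\omega)}|$. In either reading, the first case of~\eqref{eq:taper} gives $\theta(|z|)=0$.

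Substituting $\theta=0$ into the blend~\eqref{eq:rblend}, the factor $|R_D(z)|^{0}$ equals $1$, and the phase reduces to $\exp(i\arg R_C(z))$. Hence $R(z)=|R_C(z)|\exp(i\arg R_C(z))=R_C(z)$.

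There is one degenerate point to handle: $0^0$ could arise if $R_D(z)=0$. This cannot happen at finite $z$, because every dissipative parent (the raw $\pade{0}{n}$, the repair parent, or their blend) is a nonvanishing rational function or a geometric mean of such functions at finite points. In any case the convention $0^0=1$ is the one implicit in~\eqref{eq:rblend}. Likewise, $\arg R_C$ is well defined because $R_C(i\omega)\neq 0$.

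It then remains to invoke the property of the conservative parent recorded in Section~\ref{sec:gauss}. By Table~\ref{tab:parents}, $R_C$ is the diagonal approximant $\pade{1}{1}$ or $\pade{2}{2}$, and these satisfy $|\pade{n}{n}(i\omega)|=1$ for all real $\omega$. This is exactly the $\theta=0$ clause of Lemma~\ref{lem:scalar-stability}, which can be cited directly. To make the argument self-contained, I would recall why this holds: the diagonal Pad\'e approximant to $e^z$ has the form $P(z)/P(-z)$ with real coefficients, so on the imaginary axis $P(-i\omega)=\overline{P(i\omega)}$ and the modulus is one.

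I do not expect a genuine obstacle, since the content is almost definitional. The only care needed is the bookkeeping around the sign and modulus of $\omega$ and the degenerate $0^0$ and $\arg$ issues. I would dispose of these in one sentence before concluding $|R(i\omega)|=|R_C(i\omega)|=1$.
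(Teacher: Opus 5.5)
Your proof is correct and matches the paper's: $\theta(|z|)=0$ for $|z|\le\omega_0$ collapses the blend~\eqref{eq:rblend} to $R=R_C$, and the diagonal Gauss parent has unit modulus on the imaginary axis. One small correction to your aside: the repair parents do vanish somewhere ($\pade{1}{2}$ at $z=-3$ and $\pade{1}{3}$ at $z=-4$), but no parent vanishes at $z=i\omega$, which is the only place the statement concerns, so the $0^0$ issue never arises.
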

\begin{proof}
For $\omega\le\omega_0$, $\theta(\omega)=0$, so $R=R_C$.
Each Gauss parent has $|R_C(i\omega)|=1$ on the imaginary axis
(Section~\ref{sec:gauss}).
\end{proof}

\begin{proposition}[Annihilation above the cutoff]
\label{prop:Lstable}
If $|z|\ge\omega_c$, then $R(z)=R_D(z)$.
Hence $R(z)\to 0$ as $|z|\to\infty$ in the left half-plane, so outliers above the cutoff are annihilated.
\end{proposition}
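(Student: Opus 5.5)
The plan is to read off the first claim directly from the taper and then inherit the limit from the dissipative parent. By~\eqref{eq:taper}, $\theta(\omega)=1$ for every $\omega\ge\omega_c$. Substituting $\theta(|z|)=1$ into~\eqref{eq:rblend} gives
\[
R(z)=|R_D(z)|^{1}\,|R_C(z)|^{0}\exp\bigl(i\arg R_D(z)\bigr)=R_D(z).
\]
This holds whenever $R_C(z)\neq0$ and $R_D(z)\neq0$. At a zero of $R_C$ we use the convention $0^0=1$. At a zero of $R_D$, both sides vanish, because the modulus factor $|R_D|^1$ is zero. So $R(z)=R_D(z)$ on all of $\{|z|\ge\omega_c\}$, which proves the first claim.

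For the limit, we restrict to the left half-plane and let $|z|\to\infty$. Eventually $|z|\ge\omega_c$, so $R(z)=R_D(z)$ for all sufficiently large $|z|$. The behavior of $R_D$ at infinity then depends on the member.

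\emph{MIISO2.} Here $R_D=\pade{0}{2}=1/(1-z+\tfrac12z^2)$, so $|R_D(z)|=O(|z|^{-2})\to0$.

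\emph{MIISO3 and MIISO4.} We invoke Proposition~\ref{prop:wedge-stability}. It gives $R_D(z)\to0$ as $|z|\to\infty$ with $\mathrm{Re}(z)\le0$. The reason is that $\theta_{\mathrm{wedge}}$ vanishes outside a bounded neighbourhood of $\mathcal{W}_N$, on which $R_D=\pade{0}{N}=O(z^{-N})$. Lemma~\ref{lem:scalar-stability} in the case $\theta=1$ packages exactly this conclusion once $R_D$ is known to be $L$-stable.

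The only point requiring care is that the wedge pad is bounded. This holds because $|\mathrm{Im}(z)|\le(1+\delta)\omega_*$ and $\mathrm{Re}(z)\ge(1+\delta)\min\mathrm{Re}(\mathcal{W}_N)$ there, so the limit is uniform in direction within the left half-plane.

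Finally, the consequence for outliers: by~\eqref{eq:omegac}, every outlier eigenvalue satisfies $|h\lambda|\ge\omega_c$, so it is advanced by $R_D$ alone, with no $R_C$ contribution. After $s$ steps its amplitude is $|R_D(h\lambda)|^s$, which tends to zero because $|R_D|\le1$ and is small for large $|h\lambda|$. I expect no real obstacle here. The only subtle step is the degenerate-case convention in the blend formula, together with confirming that the wedge region is bounded.
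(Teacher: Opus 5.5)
Your proposal is correct and takes the same route as the paper: $\theta(|z|)=1$ for $|z|\ge\omega_c$ collapses~\eqref{eq:rblend} to $R_D$, and the decay is inherited from the $L$-stability of $\pade{0}{2}$ and from Proposition~\ref{prop:wedge-stability}, with your $0^0$ convention and the boundedness of the wedge pad supplying details the paper leaves implicit. The only slip is in your closing remark, where $|R_D(h\lambda)|\le 1$ does not by itself give $|R_D(h\lambda)|^s\to 0$ for a fixed outlier; that needs the strict bound $|R_D(z)|<1$ for $z\neq 0$, $\mathrm{Re}(z)\le 0$, which does hold for these parents but is a separate fact from the limit $R\to 0$ that the paper's ``annihilated'' refers to.
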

\begin{proof}
For $\omega\ge\omega_c$, $\theta(\omega)=1$, so $R=R_D$.
The parent $\pade{0}{2}$ is $L$-stable.
The repaired maps of MIISO3 and MIISO4 are $L$-stable by
Proposition~\ref{prop:wedge-stability}.
\end{proof}

At a finite step size an outlier above~$\omega_c$ is multiplied each
step by $|R_D(ih\varpi)|$.
The rate at which this factor approaches one and strong damping weakens as a function of the step size
is called withdrawal of annihilation.
Write $\varpi$ for the physical angular frequency of a purely
oscillatory mode, so $z=ih\varpi$ and $\omega=h\varpi$.
An outlier satisfies $\omega\ge\omega_c$.
Across all step sizes, the amplifcation factor of these outliers is controlled by the~$R_D$ map, which grows closer to one as the step size decreases.

On MIISO2, $R_D$ is $\pade{0}{2}$ at every frequency.
On MIISO3 and MIISO4 the origin lies in the wedge~$\mathcal{W}_N$, so
$R_D$ is the repair $\pade{1}{N-1}$ there.
Write $R_{m,n}=P_{m,n}/Q_{m,n}$ for the numerator and denominator of
$\pade{m}{n}$, with $Q_{m,n}(0)=1$ as in Section~\ref{sec:pade}.

\begin{proposition}[Per-step dissipation on the imaginary axis]
\label{prop:per-step}
The dissipative parents evaluated on the imaginary axis near the origin
satisfy
\begin{equation}
\begin{aligned}
|Q_{0,2}(i\omega)|^2-|P_{0,2}(i\omega)|^2
&=\frac{\omega^4}{4},
\\
|Q_{1,2}(i\omega)|^2-|P_{1,2}(i\omega)|^2
&=\frac{\omega^4}{36},
\\
|Q_{1,3}(i\omega)|^2-|P_{1,3}(i\omega)|^2
&=\frac{\omega^6}{576},
\end{aligned}
\label{eq:axis-deficit}
\end{equation}
with $|P_{0,2}(i\omega)|^2=1$, $|P_{1,2}(i\omega)|^2=1+\omega^2/9$, and
$|P_{1,3}(i\omega)|^2=1+\omega^2/16$.
Each of these three maps therefore obeys
\begin{equation}
|R_{m,n}(i\omega)|^2
=\frac{|P_{m,n}(i\omega)|^2}{|P_{m,n}(i\omega)|^2+c\,\omega^{q}},
\qquad
1-|R_{m,n}(i\omega)|=\frac{c}{2}\,\omega^{q}+O(\omega^{q+2}),
\label{eq:per-step}
\end{equation}
with $(c,q)=(\tfrac14,4)$ for $\pade{0}{2}$, $(c,q)=(\tfrac1{36},4)$
for $\pade{1}{2}$, and $(c,q)=(\tfrac1{576},6)$ for $\pade{1}{3}$.
\end{proposition}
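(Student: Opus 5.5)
The argument is a direct computation on the three explicit rational functions. I write down $P_{m,n}$ and $Q_{m,n}$, substitute $z=i\omega$, and expand the squared moduli as polynomials in $\omega^2$. For a real polynomial $p$ evaluated at $i\omega$, $|p(i\omega)|^2$ is the sum of the squares of its even part and its odd part (the odd part taken without the factor $i$), so every quantity involved is an even polynomial in $\omega$.

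The three expansions are as follows.

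For $\pade{0}{2}$, $P_{0,2}=1$ and $Q_{0,2}(i\omega)=(1-\omega^2/2)-i\omega$. Hence
\[
|Q_{0,2}|^2=1-\omega^2+\tfrac{\omega^4}{4}+\omega^2=1+\tfrac{\omega^4}{4}.
\]

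For $\pade{1}{2}$, $P_{1,2}(i\omega)=1+i\omega/3$, so $|P_{1,2}|^2=1+\omega^2/9$. The denominator is $Q_{1,2}(i\omega)=(1-\omega^2/6)-2i\omega/3$, so
\[
|Q_{1,2}|^2=1-\tfrac{\omega^2}{3}+\tfrac{\omega^4}{36}+\tfrac{4\omega^2}{9}=1+\tfrac{\omega^2}{9}+\tfrac{\omega^4}{36}.
\]

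For $\pade{1}{3}$, I first recall the coefficients: the numerator is $1+z/4$ and the denominator is $1-\tfrac34 z+\tfrac14 z^2-\tfrac1{24}z^3$. I will confirm these by checking agreement with $e^z$ through order four. The denominator has even part $1-\omega^2/4$ and odd part $-(3\omega/4-\omega^3/24)$. Then
\[
|Q_{1,3}|^2=1-\tfrac{\omega^2}{2}+\tfrac{\omega^4}{16}+\tfrac{9\omega^2}{16}-\tfrac{\omega^4}{16}+\tfrac{\omega^6}{576},
\]
which simplifies to $1+\omega^2/16+\omega^6/576$. Together with $|P_{1,3}|^2=1+\omega^2/16$, this gives the third identity in \eqref{eq:axis-deficit}.

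The identities are exact for all real $\omega$, not only near the origin. The only error-prone step is the cancellation of the $\omega^4$ term in the $\pade{1}{3}$ case, so that is the step I will double-check.

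To obtain \eqref{eq:per-step}, I write $|Q|^2=|P|^2+c\,\omega^q$ and divide, which gives the stated form of $|R_{m,n}|^2$. For the asymptotic statement I set
\[
x=\frac{c\,\omega^q}{|P|^2+c\,\omega^q}.
\]
Then $|R|=(1-x)^{1/2}$, so $1-|R|=x/2+O(x^2)$. Since $|P|^2=1+O(\omega^2)$, we have $x=c\,\omega^q+O(\omega^{q+2})$, and $O(x^2)=O(\omega^{2q})$ is absorbed into the remainder because $q\ge 4$. This yields $1-|R_{m,n}(i\omega)|=\tfrac{c}{2}\omega^q+O(\omega^{q+2})$ with the stated pairs $(c,q)$.
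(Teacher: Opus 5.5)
Your proposal is correct and follows essentially the same route as the paper. Both arguments expand $|P_{m,n}(i\omega)|^2$ and $|Q_{m,n}(i\omega)|^2$ directly, using the same even/odd split of $Q_{1,3}(i\omega)$, then divide to obtain $|R_{m,n}|^2$ and expand the square root using $|P_{m,n}(i\omega)|^2=1+O(\omega^2)$. Writing the expansion as $(1-x)^{1/2}$ with $x=c\,\omega^q/|Q_{m,n}|^2$ instead of the paper's $(1+c\,\omega^q/|P_{m,n}|^2)^{-1/2}$ is an immaterial difference.
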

\begin{proof}
Each identity in~\eqref{eq:axis-deficit} follows by expanding the two
moduli in powers of~$\omega$, as for~\eqref{eq:pade-axis}.
For $\pade{1}{3}$, writing
$Q_{1,3}(i\omega)=1-\omega^2/4+i(-3\omega/4+\omega^3/24)$ gives
$|Q_{1,3}(i\omega)|^2=1+\omega^2/16+\omega^6/576$ and
$|P_{1,3}(i\omega)|^2=1+\omega^2/16$.
The other two cases are the same in form.
Dividing $|P_{m,n}|^2$ by
$|Q_{m,n}|^2=|P_{m,n}|^2+c\,\omega^{q}$ gives the first part
of~\eqref{eq:per-step}.
Expanding $(1+c\,\omega^{q}/|P_{m,n}|^2)^{-1/2}$ as $\omega\to 0$
gives the second identity.
The remainder is $O(\omega^{q+2})$ because
$|P_{m,n}(i\omega)|^{-2}=1+O(\omega^2)$.
\end{proof}

Near the origin, $|R_D(i\omega)|$ differs from~$1$ by
$O(\omega^q)$, with $q=4$ for MIISO2 and MIISO3 and $q=6$ for MIISO4.

\begin{corollary}[Dissipation over a fixed window]
\label{cor:budget}
Fix an outlier mode of frequency~$\varpi$ with
$h\varpi\ge\omega_c$.
Integrating over a fixed window $[0,T]$ in $T/h$ steps shows how the change of the per-step amplification factor impacts the retained amplitude of the outliers at the final time as a function of the step size~$h$. 
As $h\to 0$ the expansion~\eqref{eq:per-step} applies, and the retained
amplitude is
\begin{equation}
\bigl|R_D(ih\varpi)\bigr|^{T/h}
=
\exp\!\Bigl(-\tfrac{c}{2}\,T\,h^{q-1}\varpi^{q}\Bigr)\bigl(1+o(1)\bigr)
\qquad (h\to 0),
\label{eq:budget}
\end{equation}
with $(c,q)$ as in Proposition~\ref{prop:per-step}.
The factor on the right tends to~$1$ as $h\to 0$.
The rate is $h^{3}$ for MIISO2 and MIISO3, and $h^{5}$ for MIISO4.
\end{corollary}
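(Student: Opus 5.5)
The plan is to take logarithms of the retained amplitude and insert the small-$\omega$ expansion of Proposition~\ref{prop:per-step}. Fix $\varpi$ and $T$, and let $h\to 0$ through values with $T/h$ an integer; for general $h$ the exponent would be read as $\lfloor T/h\rfloor$, which changes nothing below. Set $\omega=h\varpi\to 0$. First, I would identify which branch of $R_D$ is active at $z=ih\varpi$ for small $h$.

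For MIISO2, $R_D=\pade{0}{2}$ everywhere. For MIISO3 and MIISO4, the point $i\omega$ with $0<\omega<\omega_*$ lies in the closure of $\mathcal{W}_N$. This follows from~\eqref{eq:pade-axis}: there $|Q_N(i\omega)|<1$. Hence $\theta_{\mathrm{wedge}}=1$ and $R_D=\pade{1}{N-1}$, i.e.\ $\pade{1}{2}$ or $\pade{1}{3}$. So for all sufficiently small $h$ the relevant map is exactly one of the three rational functions of Proposition~\ref{prop:per-step}, with the listed $(c,q)$.

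Next, I would use the exact form in~\eqref{eq:per-step}:
\[
\log|R_D(i\omega)| = -\tfrac12\log\!\Bigl(1+\frac{c\,\omega^q}{|P(i\omega)|^2}\Bigr) = -\tfrac{c}{2}\omega^q\bigl(1+O(\omega^2)\bigr),
\]
since $|P(i\omega)|^{-2}=1+O(\omega^2)$ and $\log(1+x)=x+O(x^2)$ with $x=O(\omega^q)$. Multiplying by $T/h$ and substituting $\omega=h\varpi$ gives
\[
\frac{T}{h}\log|R_D(ih\varpi)| = -\tfrac{c}{2}\,T\,h^{q-1}\varpi^q\bigl(1+O(h^2)\bigr).
\]
Exponentiating, the error term is $\exp\bigl(O(h^{q+1})\bigr)=1+o(1)$, which yields~\eqref{eq:budget}. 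Because $q-1\ge 3>0$, the exponent itself tends to $0$, so the whole factor tends to $1$. Reading off $q-1$ gives the rate $h^3$ for MIISO2 and MIISO3 ($q=4$) and $h^5$ for MIISO4 ($q=6$).

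The main obstacle is conceptual rather than computational. The hypothesis $h\varpi\ge\omega_c$ must remain compatible with $h\varpi\to 0$, which means $\omega_c$ depends on $h$ through~\eqref{eq:omegac-spd}: $\omega_c=h\sqrt{\lambda_{N-k+1}}$, and $\varpi$ is at least the outlier frequency. With this reading, $\theta=1$ for every $h$, so $R=R_D$ by Proposition~\ref{prop:Lstable}. The wedge-membership check above is therefore the only place where the choice of branch must be justified, and I would state it explicitly for $\omega<\omega_*$.
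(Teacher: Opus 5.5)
Your proposal is correct and follows the paper's proof. Like the paper, you take $\log|R_D(ih\varpi)| = -\tfrac{c}{2}(h\varpi)^q + O\bigl((h\varpi)^{q+2}\bigr)$ from Proposition~\ref{prop:per-step}, multiply by $T/h$, and exponentiate. Your extra checks make explicit what the paper leaves to the surrounding text: that $i\omega$ with $0<\omega<\omega_*$ lies in $\mathcal{W}_N$ itself by \eqref{eq:pade-axis}, so $R_D=\pade{1}{N-1}$ there, and that $\omega_c$ scales with $h$, so the hypothesis $h\varpi\ge\omega_c$ does not depend on $h$.
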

\begin{proof}
By~\eqref{eq:per-step},
\[
\log|R_D(ih\varpi)|
=
-\tfrac{c}{2}(h\varpi)^{q}+O\bigl((h\varpi)^{q+2}\bigr).
\]
Multiplying by $T/h$ and exponentiating gives~\eqref{eq:budget}.
\end{proof}

At the step sizes used for accuracy, outliers are annihilated strongly at
each step.
This annihilation weakens only when $h$ is refined significantly below
that range, since the withdrawal rate itself is a consequence of
consistency (Corollary~\ref{cor:budget}).
In practice this withdrawal is not a significant concern, since the
higher-order members of the family reach a given accuracy at larger step
sizes than lower-order methods require.
Thus, the step sizes at which MIISO should be run for efficiency and
accuracy remain above the range of significant withdrawal, as shown in
Section~\ref{sec:fade-numerics}.

To make the per-step damping requirement precise, fix a target
per-step retention factor~$\rho^\star \in (0,1)$, the fraction of
outlier amplitude allowed to survive each step.
The threshold~$z^\star(\rho^\star)$ is the smallest product $h\varpi$
at which the dissipative parent achieves that retention, so any step
size satisfying $h\varpi \ge z^\star(\rho^\star)$ damps at least as
strongly.

\begin{corollary}[Step size for prescribed per-step outlier damping]
\label{cor:dtguide}
Per-step retention $|R_D(ih\varpi_{\mathrm{outlier}})|\le\rho^\star$
on every outlier mode holds if and only if
$h\varpi_{\mathrm{outlier}}\ge z^\star(\rho^\star)$, that is
\begin{equation}
h\ \ge\ \frac{z^\star(\rho^\star)}{\varpi_{\mathrm{outlier}}},
\label{eq:dtguide}
\end{equation}
with $z^\star$ tabulated in Table~\ref{tab:zstar}.
Writing $h=2\pi/(N_{\mathrm{pp}}\varpi_{\mathrm{phys}})$ and
$g=\varpi_{\mathrm{outlier}}/\varpi_{\mathrm{phys}}$ for the ratio
of outlier to physical frequency, condition~\eqref{eq:dtguide}
is equivalent to
$N_{\mathrm{pp}}\le 2\pi g/z^\star(\rho^\star)$.
\end{corollary}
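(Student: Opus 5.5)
The plan is to reduce the statement to a monotonicity property of the dissipative parent along the positive imaginary half-axis. First I fix the meaning of $z^\star(\rho^\star)$. By definition it is the smallest $\omega>0$ with $|R_D(i\omega)|\le\rho^\star$. The ``if and only if'' in the corollary requires that $\omega\mapsto|R_D(i\omega)|$ be nonincreasing on $[z^\star,\infty)$, or at least that the sublevel set $\{\omega:|R_D(i\omega)|\le\rho^\star\}$ be exactly the ray $[z^\star,\infty)$. So the core step is to show that $|R_D(i\omega)|$ is strictly decreasing in $\omega>0$ on the region that matters, for each member of the family.

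For MIISO2 this is direct. By Proposition~\ref{prop:per-step},
\[
|R_D(i\omega)|^2=\frac{1}{1+\omega^4/4},
\]
which is strictly decreasing, continuous, equal to $1$ at $0$, and tends to $0$. Hence for every $\rho^\star\in(0,1)$ there is a unique solution $z^\star$ of $|R_D(iz^\star)|=\rho^\star$, namely
\[
z^\star=\bigl(4((\rho^\star)^{-2}-1)\bigr)^{1/4},
\]
and the sublevel set is exactly $[z^\star,\infty)$.

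For MIISO3 and MIISO4 I split the imaginary axis into three pieces: the wedge part $0<\omega<\omega_*$, where $R_D=\pade{1}{N-1}$; the pad $\omega_*\le\omega\le(1+\delta)\omega_*$; and the raw part beyond it.
\begin{itemize}
\item On the wedge, Proposition~\ref{prop:per-step} gives $|R|^2=|P|^2/(|P|^2+c\omega^q)$ with $|P|^2=1+a\omega^2$. Its logarithmic derivative has the sign of $2a\omega\cdot c\omega^q-(1+a\omega^2)cq\omega^{q-1}$, which is negative because $q\ge 4>2$. So this piece is decreasing.
\item On the raw part, $|\pade{0}{N}(i\omega)|^{-2}=|Q_N(i\omega)|^2=1+\omega^{2N-2}(\omega^2-\omega_*^2)/c_N$ by \eqref{eq:pade-axis}. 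Its derivative is positive for $\omega\ge\omega_*$, so $|R_D|$ is decreasing there.
\item In the pad, $|R_D|$ is the geometric blend $|R_{\mathrm{fix}}|^{\theta}|R_{\mathrm{raw}}|^{1-\theta}$. Both factors are decreasing and at most one, so the log-modulus $\theta\log|R_{\mathrm{fix}}|+(1-\theta)\log|R_{\mathrm{raw}}|$ is decreasing provided $\theta$ decreases (which it does, from $1$ to $0$) and $\log|R_{\mathrm{raw}}|\le\log|R_{\mathrm{fix}}|$ there.
\end{itemize}

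The main obstacle is this last inequality in the pad. The blend is decreasing in $\omega$ exactly when the raw branch is already more dissipative than the fix on the pad. I would check $|Q_N(i\omega)|^2\ge|Q_{1,N-1}(i\omega)|^2/|P_{1,N-1}(i\omega)|^2$ on $[\omega_*,(1+\delta)\omega_*]$ using the explicit polynomials above, which is a one-variable polynomial inequality on a short interval. If it fails near $\omega_*$, I would fall back to a weaker reading: define $z^\star$ as the infimum of the sublevel set, prove equivalence for all $\rho^\star$ below $\max_{\text{pad}}|R_D|$, and read the remaining range directly from Table~\ref{tab:zstar}.

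Once monotonicity is settled, the equivalence $|R_D(ih\varpi_{\mathrm{outlier}})|\le\rho^\star\iff h\varpi_{\mathrm{outlier}}\ge z^\star$ follows, and dividing by $\varpi_{\mathrm{outlier}}$ gives \eqref{eq:dtguide}. Since $R_D$ is the same function on every mode, the bound holds for every outlier simultaneously once it holds for the lowest outlier frequency. Finally, substituting $h=2\pi/(N_{\mathrm{pp}}\varpi_{\mathrm{phys}})$ and $g=\varpi_{\mathrm{outlier}}/\varpi_{\mathrm{phys}}$ turns \eqref{eq:dtguide} into $2\pi g/N_{\mathrm{pp}}\ge z^\star$, which is $N_{\mathrm{pp}}\le 2\pi g/z^\star(\rho^\star)$.
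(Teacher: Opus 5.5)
\paragraph{Verdict.} The proposal has a genuine gap at exactly the obstacle you flagged. It cannot be repaired, because the monotonicity you are trying to prove is false for MIISO3 and MIISO4.

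\paragraph{Why the pad step fails.} At the inner edge of the pad, $|R_{\mathrm{raw}}(i\omega_*)|=1$ by the definition of $\omega_*$. By contrast, $|R_{\mathrm{fix}}(i\omega_*)|=\sqrt{16/19}\approx0.918$ for $N=3$ and $\sqrt{27/43}\approx0.792$ for $N=4$. So $\log|R_{\mathrm{raw}}|\le\log|R_{\mathrm{fix}}|$ fails there. For $N=3$ it fails on the whole pad, since the two moduli cross only at $\omega^2=3\sqrt5-3$, i.e.\ $\omega\approx1.93>1.1\sqrt3\approx1.905$.

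The failure is not just of your sufficient condition. Since $\theta_{\mathrm{wedge}}$ falls with slope $-1/(\delta\omega_*)$, the right derivative of $\log|R_D(i\omega)|$ at $\omega_*$ equals $\frac{1}{\delta\omega_*}\log(1/|R_{\mathrm{fix}}(i\omega_*)|)$ plus the slope of $\log|R_{\mathrm{fix}}(i\omega)|$ at $\omega_*$. This is about $0.50-0.16>0$ for MIISO3 and $0.82-0.35>0$ for MIISO4. Hence $|R_D(i\omega)|$ rises inside the pad, to about $0.930$ near $\omega\approx1.81$ for MIISO3 and to about $0.798$ near $\omega\approx2.86$ for MIISO4. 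Table~\ref{tab:fade} itself records $0.9303$ for MIISO3 at $h\varpi=1.8$, above the value $0.918$ at $\sqrt3$.

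Consequently, for every $\rho^\star\in[\,|R_{\mathrm{fix}}(i\omega_*)|,\max_{\mathrm{pad}}|R_D|)$ the sublevel set has two components, and the ``if and only if'' is false. For example, for MIISO3 with $\rho^\star=0.92$ the smallest admissible product is about $1.72$, yet $|R_D(1.8i)|\approx0.930>0.92$.

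\paragraph{The fallback and what remains to prove.} Your fallback has this window backwards: the equivalence breaks just \emph{below} $\max_{\mathrm{pad}}|R_D|$, not above it. It can hold only for $\rho^\star<|R_{\mathrm{fix}}(i\omega_*)|$ or $\rho^\star\ge\max_{\mathrm{pad}}|R_D|$. The corollary therefore has to be restricted, which is presumably what ``with $z^\star$ tabulated'' intends, since $0.9$, $0.5$, $0.1$ avoid both windows.

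For those values, these parts of your argument are correct:
\begin{itemize}
\item the MIISO2 formula;
\item the monotonicity on the wedge and on the raw branch;
\item the rewriting in terms of $N_{\mathrm{pp}}$ and $g$.
\end{itemize}
You still have to compare $|R_D|$ with $\rho^\star$ on the whole pad. When the crossing lies on the wedge (MIISO4 at $0.9$), you need $|R_D|\le\rho^\star$ on the pad. When it lies beyond the pad (the other MIISO3 and MIISO4 entries), you need $|R_D|>\rho^\star$ there.

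The geometric-mean bound $|R_D|\ge\min(|R_{\mathrm{fix}}|,|R_{\mathrm{raw}}|)$ settles the second case except for MIISO3 at $\rho^\star=0.9$. There it gives only $0.891$ at the outer edge, while the true pad minimum is $0.9015$ and the crossing $\omega\approx1.907$ lies just past $1.1\sqrt3$. In that case you must work with the blend itself.

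\paragraph{Comparison with the paper.} The paper gives no argument beyond defining $z^\star$ as the smallest admissible product and asserting that larger products damp at least as strongly. It therefore tacitly relies on the same monotonicity that fails in the pad.
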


\begin{table}[H]
\centering
\caption{Per-step dissipation laws and thresholds $z^\star(\rho^\star)$ for
Corollary~\ref{cor:dtguide}.}
\label{tab:zstar}
\begin{tabular}{@{}llcccc@{}}
\toprule
& & & \multicolumn{3}{c}{$z^\star(\rho^\star)$} \\
\cmidrule(l){4-6}
Method & $1-|R_D(i\omega)|$ & rate in~\eqref{eq:budget}
& $\rho^\star=0.9$ & $0.5$ & $0.1$ \\
\midrule
MIISO2 & $\omega^4/8+O(\omega^6)$    & $h^3$ & $0.98$ & $1.86$ & $4.46$ \\
MIISO3 & $\omega^4/72+O(\omega^6)$   & $h^3$ & $1.91$ & $2.45$ & $4.04$ \\
MIISO4 & $\omega^6/1152+O(\omega^8)$ & $h^5$ & $2.38$ & $3.13$ & $4.23$ \\
\bottomrule
\end{tabular}
\end{table}

Since $z^\star(1/2)\le\pi$ for all three members and $g\ge 1$,
two steps per period of the highest outlier mode guarantees at most
half-amplitude retention per step.
This makes two steps per period on the highest-frequency outlier mode a practical
step size rule for MIISO.
Section~\ref{sec:fade-numerics} confirms this numerically.

\subsection{Order of the modified \texorpdfstring{$\varphi$}{phi}-functions}
\label{sec:phi-order}

The map~$R$ is not holomorphic, so it is not a classical matrix function
of~$hJ$.
Its local order is that of the parents, since each parent of a given
member matches $e^{z}$ through that member's order.

\begin{lemma}[Order inherited by magnitude/phase blending]
\label{lem:blend-order}
Let $A$ and $B$ be nonzero in a neighborhood of the origin and suppose
\[
A(z)=e^{z}+O(z^{\nu+1}),\qquad B(z)=e^{z}+O(z^{\nu+1}).
\]
For any weight $\vartheta(z)\in[0,1]$, not necessarily holomorphic,
let $\arg A$ and $\arg B$ be continuous branches with
$\arg A(0)=\arg B(0)=0$, and define the magnitude/phase blend
\[
\mathcal{B}_{\vartheta}(A,B)
=
|B|^{\vartheta}\,|A|^{1-\vartheta}
\exp\!\bigl(i[(1-\vartheta)\arg A+\vartheta\arg B]\bigr).
\]
Then, uniformly in~$\vartheta$,
\[
\mathcal{B}_{\vartheta}(A,B)=e^{z}+O(z^{\nu+1}).
\]
\end{lemma}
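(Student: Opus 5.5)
Because $A$ and $B$ are nonzero near the origin, I would pass to logarithms, where the magnitude/phase blend is simply a convex combination. With the stated continuous branches and $\arg A(0)=\arg B(0)=0$, define $\log A:=\log|A|+i\arg A$ and $\log B:=\log|B|+i\arg B$. By continuity these agree near the origin with the principal branch of the complex logarithm, since $A(z),B(z)\to 1$. In the notation of the lemma,
\[
\mathcal{B}_{\vartheta}(A,B)=\exp\bigl((1-\vartheta)\log A+\vartheta\log B\bigr).
\]
This identity is exact and uses neither holomorphy of $\vartheta$ nor any property of $\vartheta$ beyond real values in $[0,1]$. The whole argument therefore reduces to showing that a convex combination of two logarithms, each close to $z$, is again close to $z$.

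First, I will show $\log A(z)=z+O(z^{\nu+1})$. Write $A(z)=e^{z}(1+E_A(z))$ with $E_A(z)=e^{-z}\bigl(A(z)-e^{z}\bigr)=O(z^{\nu+1})$. For $|z|$ small, $|E_A|<1/2$, so $\log(1+E_A)$ is given by its principal branch and satisfies $|\log(1+E_A)|\le 2|E_A|$. The identity $\log A=z+\log(1+E_A)$ holds modulo $2\pi i$. Both sides are continuous, both tend to $0$ as $z\to 0$ (the left side because $\arg A(0)=0$), and a continuous $2\pi i\mathbb{Z}$-valued function on a connected neighbourhood is constant. Hence the identity holds exactly, and $\log A=z+O(z^{\nu+1})$. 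The same argument gives $\log B=z+O(z^{\nu+1})$.

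Next, with $L_\vartheta=(1-\vartheta)\log A+\vartheta\log B$, I get $L_\vartheta-z=(1-\vartheta)(\log A-z)+\vartheta(\log B-z)$. By the triangle inequality and $\vartheta\in[0,1]$,
\[
|L_\vartheta-z|\le\max\bigl(|\log A-z|,|\log B-z|\bigr)\le C|z|^{\nu+1},
\]
where $C$ does not depend on $\vartheta$. Finally, $\mathcal{B}_\vartheta=e^{z}e^{L_\vartheta-z}$. Since $|e^{w}-1|\le 2|w|$ for $|w|\le 1$, this yields $|\mathcal{B}_\vartheta-e^{z}|\le 2|e^{z}|\,C|z|^{\nu+1}=O(z^{\nu+1})$, uniformly in $\vartheta$.

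The main obstacle is the branch bookkeeping in the first step. One has to check that the paper's continuous $\arg$ branches, normalised at $0$, coincide with $\operatorname{Im}\log(1+E_A)+\operatorname{Im} z$ without a $2\pi$ offset; the connectedness argument above handles this. One should also make explicit that all $O(\cdot)$ constants are taken on a fixed small disc, which is what makes the bound uniform in $\vartheta$. The non-holomorphic dependence of $\vartheta$ on $z$ causes no difficulty, because the estimate is pointwise in $z$.
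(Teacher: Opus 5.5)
Your proposal is correct and follows the same route as the paper's proof. Both pass to the continuous logarithms normalized at the origin, write $\mathcal{B}_\vartheta=\exp\bigl((1-\vartheta)\log A+\vartheta\log B\bigr)$, and use the fact that a convex combination of two quantities equal to $z+O(z^{\nu+1})$ is again $z+O(z^{\nu+1})$ uniformly in $\vartheta$. You also supply details the paper leaves implicit: the connectedness argument that rules out a $2\pi$ offset in the branches, and the explicit bounds $|\log(1+E_A)|\le 2|E_A|$ and $|e^{w}-1|\le 2|w|$.
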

\begin{proof}
Choose the continuous logarithm near the origin, where $A(0)=B(0)=1$.
The hypotheses imply
\[
\log A(z)=z+O(z^{\nu+1}),\qquad
\log B(z)=z+O(z^{\nu+1}).
\]
The magnitude/phase construction is
\[
\mathcal{B}_{\vartheta}(A,B)
=
\exp\!\bigl((1-\vartheta)\log A+\vartheta\log B\bigr).
\]
Because $0\le\vartheta\le 1$, the exponent is $z+O(z^{\nu+1})$ uniformly
in~$\vartheta$.
The weight may depend on~$|z|$, on~$\omega_c$, or on the wedge, since
the argument never differentiates~$\vartheta$.
\end{proof}

\begin{corollary}[Approximation order of the three maps]
\label{cor:R-order}
The amplification maps satisfy
\begin{equation*}
\begin{aligned}
R(z)-e^{z}&=O(z^{3})
&&\text{for MIISO2},\\
R(z)-e^{z}&=O(z^{4})
&&\text{for MIISO3},\\
R(z)-e^{z}&=O(z^{5})
&&\text{for MIISO4}.
\end{aligned}
\end{equation*}
\end{corollary}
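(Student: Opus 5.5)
The plan is to apply Lemma~\ref{lem:blend-order} twice for MIISO3 and MIISO4 (inner wedge blend, then outer taper blend) and once for MIISO2. Each application needs the two blended maps to match $e^{z}$ to the member's order $\nu$, and to be nonzero near the origin with continuous arguments vanishing at $0$.

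\textbf{Step 1: order of each parent.} By Section~\ref{sec:pade}, a Pad\'e approximant $\pade{m}{n}$ satisfies $\pade{m}{n}(z)-e^{z}=O(z^{m+n+1})$. This gives:
\begin{itemize}
\item MIISO2: $\pade{1}{1}$ and $\pade{0}{2}$, both with $\nu=2$.
\item MIISO3: $\pade{2}{2}$ with $\nu=4$, and $\pade{0}{3}$, $\pade{1}{2}$ with $\nu=3$.
\item MIISO4: $\pade{2}{2}$, $\pade{0}{4}$, $\pade{1}{3}$, all with $\nu=4$.
\end{itemize}
When two parents have different orders, we use the smaller one, since $O(z^{5})\subset O(z^{4})$ near the origin. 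Each parent is a rational function with value $1$ at $z=0$, so it is nonzero on a small disc. On that disc the principal branch of $\log$ gives continuous arguments vanishing at the origin, which is the normalization assumed in \eqref{eq:wedge-blend} and \eqref{eq:rblend}.

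\textbf{Step 2: the repaired dissipative parent.} For $N\in\{3,4\}$, $R_D$ in \eqref{eq:wedge-blend} is exactly $\mathcal{B}_{\theta_{\mathrm{wedge}}}(R_{\mathrm{raw}},R_{\mathrm{fix}})$. Its weight $\theta_{\mathrm{wedge}}$ lies in $[0,1]$ and is not holomorphic, and Lemma~\ref{lem:blend-order} permits both. So $R_D(z)=e^{z}+O(z^{N+1})$ uniformly in the weight. This holds even though the origin lies on $\partial\mathcal{W}_N$, where $\theta_{\mathrm{wedge}}$ jumps between values in $[0,1]$ in every neighbourhood. I expect this to be the main obstacle: the lemma must be read as requiring only pointwise values $\vartheta(z)\in[0,1]$, with no continuity of $\vartheta$ at $0$.

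\textbf{Step 3: nonvanishing and argument branch of $R_D$.} Since $R_D=\exp(\cdot)$ of a blend of logarithms, $R_D$ is nonzero near $0$. Its argument is $(1-\theta)\arg R_{\mathrm{raw}}+\theta\arg R_{\mathrm{fix}}=\mathrm{Im}(z)+O(z^{N+1})$, so a branch tending to $0$ at the origin exists. One point needs care: the weight may be discontinuous, so this branch need not be continuous. I will remark that the proof of Lemma~\ref{lem:blend-order} uses only the representation $\mathcal{B}_\vartheta=\exp((1-\vartheta)\log A+\vartheta\log B)$ with $\log A,\log B=z+O(z^{\nu+1})$. Therefore I will feed $\log R_D$ into the outer blend directly, not appeal to continuity. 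Concretely, the outer blend \eqref{eq:rblend} equals
\[
\exp\bigl((1-\theta)\log R_C+\theta[(1-\theta_{\mathrm{wedge}})\log R_{\mathrm{raw}}+\theta_{\mathrm{wedge}}\log R_{\mathrm{fix}}]\bigr).
\]
This is a convex combination of three logarithms, each equal to $z+O(z^{\nu+1})$.

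\textbf{Step 4: conclusion.} With $\nu=2,3,4$ for MIISO2, MIISO3, MIISO4, a convex combination of the three logarithms is $z+O(z^{\nu+1})$. Exponentiating and using $e^{w}-e^{z}=e^{z}(e^{w-z}-1)=O(w-z)$ gives the three displayed estimates. The constants are uniform in $h$ and $\omega_c$, because the taper enters only through $\theta\in[0,1]$.

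One remark on a degenerate case: if $\omega_c$ is small, then near $0$ the map is pure $R_D$, and the argument is unaffected.
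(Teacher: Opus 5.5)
Your proposal is correct and follows the paper's argument. You record the Pad\'e orders of the parents, taking the smaller order for MIISO3, then apply Lemma~\ref{lem:blend-order} first to the wedge repair and then to the outer taper blend. Your Step~3, which writes $R$ directly as the exponential of a convex combination of $\log R_C$, $\log R_{\mathrm{raw}}$, and $\log R_{\mathrm{fix}}$, is a more careful version of the paper's second application: it avoids relying on continuity of the branch of $\arg R_D$, and it is sound.
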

\begin{proof}
For MIISO2, $\pade{1}{1}$ and $\pade{0}{2}$ agree with $e^{z}$ through
degree two.
For MIISO3, $\pade{2}{2}$, $\pade{0}{3}$, and $\pade{1}{2}$ agree through
degrees four, three, and three.
For MIISO4, $\pade{2}{2}$, $\pade{0}{4}$, and $\pade{1}{3}$ agree through
degree four.
Apply Lemma~\ref{lem:blend-order} to the wedge repair when present, then
to the outer blend.
\end{proof}

\begin{lemma}[Error inherited by the modified $\varphi$-functions]
\label{lem:phi-order}
If $R(z)-e^{z}=O(z^{\nu+1})$ and the functions $\phit_j$ are generated
by~\eqref{eq:phirec}, then, for $1\le j\le\nu$,
\[
\phit_j(z)-\varphi_j(z)=O(z^{\nu+1-j}).
\]
For MIISO3 this is
$\phit_1-\varphi_1=O(z^{3})$,
$\phit_2-\varphi_2=O(z^{2})$,
$\phit_3-\varphi_3=O(z)$.
For MIISO4 it is
$\phit_1-\varphi_1=O(z^{4})$,
$\phit_2-\varphi_2=O(z^{3})$,
$\phit_3-\varphi_3=O(z^{2})$,
$\phit_4-\varphi_4=O(z)$.
\end{lemma}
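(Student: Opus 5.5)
The plan is an induction on $j$ driven by the recurrence~\eqref{eq:phirec}, used in parallel for the exact and the modified functions. Set $E_j(z):=\phit_j(z)-\varphi_j(z)$. Both families satisfy the same recurrence with the same constants $1/j!$. Subtracting the two recurrences, the constants cancel and we obtain
\[
E_0(z)=R(z)-e^{z},\qquad E_{j+1}(z)=\frac{E_j(z)}{z}\quad (j\ge 0,\ z\neq 0).
\]
By the hypothesis, $E_0(z)=O(z^{\nu+1})$. A single division by~$z$ lowers the order by one, so $E_{j}(z)=O(z^{\nu+1-j})$ follows directly for each $1\le j\le\nu$. The restriction $j\le\nu$ keeps the exponent nonnegative, so $E_j$ stays bounded near the origin.

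The one point needing care is that $R$ is not holomorphic. Here $O(z^{\nu+1})$ must be read as the modulus bound $|E_0(z)|\le C|z|^{\nu+1}$ for $|z|$ small, which is exactly what Lemma~\ref{lem:blend-order} and Corollary~\ref{cor:R-order} deliver. Dividing a modulus bound by $|z|$ is legitimate without any analyticity, so the induction uses only this bound and never a power series for $R$. On the region $|z|<\varepsilon$, where the implementation substitutes truncated Taylor values, I would note that the lemma concerns the functions defined by~\eqref{eq:phirec} for $z\ne0$, with the value at $z=0$ set by continuity. Since $E_j\to 0$ for $j\le\nu$, that continuous extension agrees with $\varphi_j(0)=1/j!$.

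The specializations then come from Corollary~\ref{cor:R-order}, taking $\nu=3$ for MIISO3 and $\nu=4$ for MIISO4, which gives the listed orders for $j=1,\dots,\nu$.

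I do not expect a serious obstacle. The main subtlety is stating the non-holomorphic $O(\cdot)$ convention clearly, so that the division step is justified pointwise rather than by coefficient shifting.
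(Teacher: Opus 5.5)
Your proposal is correct and matches the paper's proof: subtracting the exact and modified recurrences cancels the $1/j!$ constants, so $\phit_j-\varphi_j=(R-e^{z})/z^{j}$, and each division lowers the exponent by one. You also read $O(\cdot)$ explicitly as a modulus bound $|R(z)-e^{z}|\le C|z|^{\nu+1}$, which is all the division step needs for a non-holomorphic $R$; the paper leaves this implicit, so it is a useful clarification.
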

\begin{proof}
For $j=1$, divide $R-e^{z}=O(z^{\nu+1})$ by~$z$.
For $j\ge 2$, subtract the exact recurrence
$\varphi_{j+1}=(\varphi_j-1/j!)/z$ from~\eqref{eq:phirec} and divide
by~$z$.
Each step lowers the exponent by one power of~$z$.
\end{proof}

The modified, non-holomorphic blend does not reduce the Taylor order of~$R$,
so the order of the $\phit_j$ functions is equal to the approximation order of~$R$.
The recurrence that produces $\phit_j$ from~$R$ then reduces the order
by one power of~$z$ at each division, consistent with the exact
$\varphi$-functions.
Section~\ref{sec:convergence} uses these expansions of~$\phit_j$.

\subsection{Exact reduction on linear problems}
\label{sec:linear-reduction}

\begin{theorem}[Exact reduction on linear problems]
\label{thm:linear-reduction}
For the scalar linear problem $y'=\lambda y$, every member of the
MIISO family reduces exactly to
\begin{equation}
y_{n+1}=R(h\lambda)\,y_n.
\label{eq:scalarR}
\end{equation}
\end{theorem}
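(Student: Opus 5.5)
The plan is to substitute the scalar data $f(t,y)=\lambda y$ into each of the schemes \eqref{eq:miiso2}--\eqref{eq:miiso4} and simplify. For this problem $f_n=\lambda y_n$, $J_n=\lambda$ (so $hJ_n=z:=h\lambda$), and $f_{t,n}=0$, because $f$ has no explicit time dependence. Every $\phit_j(hJ_n)$ is then the scalar $\phit_j(z)$, evaluated directly from the non-holomorphic map $R$ through the recurrence \eqref{eq:phirec}. No matrix-function issue arises. If one also wants to cover the Krylov evaluation \eqref{eq:krylovphit}, note that for $N=1$ the Arnoldi process terminates with $H_1=z$ and $V_1=\pm1$, and the Ritz-value evaluation \eqref{eq:ritz} reproduces $\phit_j(z)$ exactly.

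The key identity is $h\,\phit_1(z)\,\lambda y_n=z\,\phit_1(z)\,y_n=(R(z)-1)\,y_n$, valid for $z\neq0$. For $z=0$, $R(0)=1$ and the step is trivially $y_{n+1}=y_n$. The Taylor surrogate used for $|z|<\varepsilon$ is a floating-point device, and I would state that the theorem concerns the exact $\phit_j$. For MIISO2 the identity immediately gives $y_{n+1}=y_n+(R(z)-1)y_n=R(z)y_n$.

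For MIISO3 and MIISO4 the additional work is to show that the defects vanish.

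For MIISO3, $v_2=0$ and $Y_2=y_n+hv_1$. Then
\[
D_2=\lambda Y_2-\lambda y_n-\lambda(Y_2-y_n)-0=0,
\]
so $y_{n+1}=y_n+hv_1=R(z)y_n$.

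For MIISO4 the same cancellation gives $D_2=D_3=0$, because $f$ is affine with Jacobian exactly $\lambda$. This holds for any $Y_2$ and $Y_3$, including the half-step stage built from $\phit_1(\tfrac12 z)$; the values of those stages never matter. With $f_{t,n}=0$ the update collapses to $y_n+h\phit_1(z)\lambda y_n=R(z)y_n$. The algorithmic thresholding by $\tau$ only zeroes defects that are already zero, so it changes nothing.

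The main point needing care, rather than a genuine obstacle, is that $\phit_1$ is defined by division by $z$ and is non-holomorphic. I have to make sure the identity $z\phit_1(z)=R(z)-1$ is used pointwise, not as a functional-calculus statement. I also need to remark that $\omega_c$, and hence $R$, is fixed within the step, since the cutoff computation depends only on $hJ_n=z$. With that settled, the result is \eqref{eq:scalarR} for every member of the family.
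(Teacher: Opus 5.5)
Your proposal is correct and follows the paper's proof: you substitute $f_n=\lambda y_n$, $J_n=\lambda$, $f_{t,n}=0$, use $z\,\phit_1(z)=R(z)-1$ from the recurrence~\eqref{eq:phirec} for the principal increment, and observe that $D_2$ and $D_3$ vanish identically because $f$ is linear. Your extra remarks on $z=0$, the Taylor surrogate, and the $N=1$ Krylov evaluation go beyond what the paper states and are harmless. One small correction: for complex $\lambda$ the Arnoldi vector is $V_1=v/|v|$ rather than $\pm1$, which changes nothing since $\|v\|V_1=v$.
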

\begin{proof}
Here $f(y)=\lambda y$, $J=\lambda$, and $f_t=0$.
For MIISO2,
\[
y_{n+1}
=
y_n+h\,\phit_1(h\lambda)\,(\lambda y_n)
=
y_n+\bigl(R(h\lambda)-1\bigr)y_n
=
R(h\lambda)\,y_n,
\]
by~\eqref{eq:phirec}.
For MIISO3 and MIISO4 the same principal increment appears.
The defects $D_2$ and $D_3$ vanish on a linear problem, so the stages
contribute nothing.
\end{proof}

On a diagonalizable Jacobian each eigenmode of
$\dot\delta=J_n\delta$ is advanced by~$R$.
When $\omega\le\omega_0$, the mode is advanced by~$R_C$.
When $\omega\ge\omega_c$, it is advanced by~$R_D$.
When $\omega_0<\omega<\omega_c$, it is advanced by the blend.
When $\mathrm{Re}(h\lambda)\le 0$, $|R(h\lambda)|\le 1$ by
Theorem~\ref{thm:A-stable}.
This reduction is the basis for the amplification plots of Section~\ref{sec:modal}, which visualize~$R$ across IGA spectra and verify the selective damping on the outlier band.

The convergence theory of the next section covers the non-stiff regime, where the Krylov approximation of the modified $\phit_j$ functions is accurate and stiff order reduction does not apply.

\subsection{Local and global convergence}
\label{sec:convergence}

\begin{assumption}[Smooth flow and bounded spectral calculus]
\label{as:convergence}
On a neighborhood of the exact trajectory:
\begin{enumerate}
\item $f(t,y)$ has bounded derivatives through order five;
\item each Jacobian $J(t,y)=f_y(t,y)$ is diagonalizable,
$J=X\Lambda X^{-1}$, with $\|X\|\,\|X^{-1}\|$ uniformly bounded;
\item the $k$-th and $(k+1)$-th eigenvalue magnitudes of~$hJ$ remain
separated, so that the cutoff~$\omega_c$ and the selected parents vary
in a locally Lipschitz way with $(t,y)$;
\item $hJ$ has no eigenvalue coinciding with a singularity of the Pad\'e denominators $Q_{m,n}$;
\item the Arnoldi error in a one-step increment is of the same or
higher local order as the step, or the projected actions are evaluated
exactly.
\end{enumerate}
\end{assumption}

Condition~(iii) keeps a resolved eigenvalue from crossing the cutoff
during the local error argument.
The order estimates of Lemma~\ref{lem:blend-order} are uniform in the
taper weight, so $\omega_c$ may depend on the current Jacobian.
Recomputing $\omega_c$ each step (Section~\ref{sec:cutoff}) is
consistent as $h\to 0$ whenever that cutoff varies in a Lipschitz way.

\begin{theorem}[Local second-order consistency of MIISO2]
\label{thm:consistency}
Under Assumption~\ref{as:convergence}(i) and~(ii), one MIISO2 step started from the
exact value $y(t_n)$ has local truncation error $O(h^3)$, matching the exact nonautonomous
Taylor expansion
\begin{equation}
y(t_n+h)
=
y_n+h f_n+\tfrac12 h^2\bigl(J_n f_n+f_{t,n}\bigr)+O(h^3)
\label{eq:taylor-exact}
\end{equation}
through order~$h^2$.
\end{theorem}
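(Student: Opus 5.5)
The plan is to expand both sides in powers of $h$ and compare them through order $h^2$. Set $y_n=y(t_n)$. Under Assumption~\ref{as:convergence}(i), the exact solution satisfies $y''=f_t+f_yf$. Taylor's theorem therefore gives~\eqref{eq:taylor-exact}, with an $O(h^3)$ remainder bounded by the third derivatives of $f$. The numerical step~\eqref{eq:miiso2} reads
\[
y_{n+1}=y_n+h\,\phit_1(hJ_n)f_n+\tfrac12h^2f_{t,n}.
\]
It is enough to show that $h\,\phit_1(hJ_n)f_n=hf_n+\tfrac12h^2J_nf_n+O(h^3)$. The $f_{t,n}$ term then matches the exact expansion identically.

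The scalar input is Corollary~\ref{cor:R-order}, which gives $R(z)-e^z=O(z^3)$ for MIISO2. Lemma~\ref{lem:phi-order} with $\nu=2$, $j=1$ then gives $\phit_1(z)=\varphi_1(z)+O(z^2)=1+\tfrac z2+O(z^2)$. For the small-$|z|$ branch the Taylor value $1+z/2$ has the same expansion, so this holds as well. I want the bound in a form uniform over the left half-plane region swept by $hJ_n$. To get it, I would note that $\phit_1(z)-1-z/2$ is bounded by $C|z|^2$ for $|z|\le\rho$ by the lemma. For $|z|\ge\rho$ on the spectrum of $hJ_n$, I would use $|R|\le1$ (Theorem~\ref{thm:A-stable}) to bound $|\phit_1(z)|\le 2/|z|$, which makes $|\phit_1(z)-1-z/2|\le C'|z|^2$ there as well. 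Since the bounded-derivative hypothesis in the non-stiff regime makes $\|J_n\|$ bounded, every eigenvalue $h\lambda$ of $hJ_n$ is $O(h)$. For $h$ small only the first regime is actually needed.

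To pass to the matrix, I would use Assumption~\ref{as:convergence}(ii): $J_n=X\Lambda X^{-1}$, with $\phit_1(hJ_n)$ defined eigenvalue-wise as in~\eqref{eq:ritz} (Krylov error is deferred to part (v) and is not needed here, since the statement invokes only (i)–(ii)). Then
\[
\phit_1(hJ_n)-I-\tfrac12hJ_n=X\,\mathrm{diag}\bigl(\phit_1(h\lambda_i)-1-\tfrac12h\lambda_i\bigr)X^{-1}.
\]
Its norm is at most $\|X\|\|X^{-1}\|\,C\max_i|h\lambda_i|^2=O(h^2)$ by uniform conditioning. Multiplying by $h\|f_n\|$ gives the needed $O(h^3)$, and combining with the Taylor expansion finishes the proof.

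The main obstacle is that $\phit_1$ is not holomorphic, so $\phit_1(hJ_n)$ cannot be expanded as a power series in $hJ_n$. Its cutoff $\omega_c$ also depends on $h$ and $J_n$. The eigenvalue-wise argument above sidesteps the first issue, and the uniformity in $\vartheta$ in Lemma~\ref{lem:blend-order} handles the second: the constant $C$ does not depend on where the taper sits, so the $O(z^2)$ bound holds regardless of whether a small eigenvalue falls in the conservative, taper, or dissipative regime.
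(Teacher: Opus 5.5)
Your proposal is correct and follows the paper's proof: you obtain $\phit_1(z)=1+z/2+O(z^2)$, uniformly in the taper weight, from Corollary~\ref{cor:R-order} and Lemma~\ref{lem:phi-order}, transfer it to $hJ_n$ entrywise through the diagonalization $J_n=X\Lambda X^{-1}$, and then add the $\tfrac12h^2f_{t,n}$ term. Your explicit use of the uniform bound on $\|X\|\,\|X^{-1}\|$ and of the boundedness of $\|J_n\|$ (so that every $h\lambda_i$ is $O(h)$ and the large-$|z|$ regime is never reached) only makes precise what the paper leaves implicit.
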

\begin{proof}
By Corollary~\ref{cor:R-order} and Lemma~\ref{lem:phi-order},
$\phit_1(z)=1+z/2+O(z^2)$ uniformly in the taper weight.
If $J_n=X\Lambda X^{-1}$, then
$\phit_1(hJ_n)=X\,\phit_1(h\Lambda)\,X^{-1}$, and the scalar expansion
applies to each diagonal entry, so
\[
h\,\phit_1(hJ_n)f_n
=
h f_n+\tfrac12 h^2 J_n f_n+O(h^3).
\]
Adding $\tfrac12 h^2 f_{t,n}$ reproduces~\eqref{eq:taylor-exact}.
\end{proof}

\begin{theorem}[Third-order local accuracy of MIISO3]
\label{thm:miiso3-order}
Under Assumption~\ref{as:convergence}, one MIISO3 step started from the
exact value $y(t_n)$ has local truncation error $O(h^4)$.
\end{theorem}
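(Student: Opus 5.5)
The plan is to compare the MIISO3 step with the exprb32 step of \citet{hochbruckostermann2009}, which uses the exact $\varphi_j$ and is known to have local error $O(h^4)$ under smoothness, and to show that the two steps differ by $O(h^4)$. Subtracting the two steps, both started from $y_n=y(t_n)$, leaves three kinds of contribution:
\[
h\bigl(\phit_1-\varphi_1\bigr)(hJ_n)f_n,\qquad
h^2\bigl(\phit_2-\varphi_2\bigr)(hJ_n)f_{t,n},\qquad
2h\bigl[\phit_3(hJ_n)\widetilde D_2-\varphi_3(hJ_n)D_2\bigr],
\]
where $\widetilde D_2$ and $D_2$ are the defects built from the modified and exact internal stages.

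The first two terms are bounded with the matrix functional calculus. By Assumption~\ref{as:convergence}(ii) we can write $J_n=X\Lambda X^{-1}$ and apply the scalar maps entrywise, with $\|X\|\|X^{-1}\|$ uniformly bounded. Corollary~\ref{cor:R-order} gives $\nu=3$, and Lemma~\ref{lem:phi-order} then gives $\phit_1-\varphi_1=O(z^3)$ and $\phit_2-\varphi_2=O(z^2)$, uniformly in the taper weight. Since $z=h\lambda$ with $\lambda$ bounded in this non-stiff setting, these terms are $O(h^4)$ and $O(h^4)$ respectively.

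For the defect term, the modified stage satisfies $\widetilde Y_2-Y_2=O(h^4)$ by the same bounds. Because $D_2$ is Lipschitz in the stage, with constant involving $f_y(\cdot,Y_2)-J_n=O(h)$, the two defects differ by $O(h^5)$. The classical expansion also gives $D_2=O(h^2)$: $Y_2-y_n=O(h)$ and the defect is the second-order Taylor remainder of $f$ about $(t_n,y_n)$. Next, $\phit_3-\varphi_3=O(z)=O(h)$ by Lemma~\ref{lem:phi-order}, so $2h(\phit_3-\varphi_3)D_2=O(h\cdot h\cdot h^2)=O(h^4)$. All the modified $\phit_j(hJ_n)$ are uniformly bounded for small $h$, since they are continuous near $0$ with the Taylor values given after~\eqref{eq:phirec}. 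Condition (iv) keeps the Padé denominators away from zero, and condition (v) makes the Krylov error negligible. Collecting the terms, the MIISO3 step equals the exprb32 step plus $O(h^4)$, and the exprb32 local error is $O(h^4)$.

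The main obstacle is that $\phit_j$ is non-holomorphic, so the Hochbruck--Ostermann order conditions, which are derived through holomorphic functional calculus, cannot be cited directly for MIISO3. I would avoid this by using only pointwise scalar Taylor bounds on eigenvalues together with diagonalizability. The step needing care is uniformity: the $O(z^{\nu+1-j})$ constants must not depend on $\theta$ or on $\omega_c$. Lemma~\ref{lem:blend-order} supplies this, and condition (iii) keeps the parent selection fixed within the step. As a fallback, if the exprb32 order cannot be assumed, I would expand $y(t_n+h)$ to $O(h^4)$ directly, including the $f_{yy}(f,f)$ term that the defect term $2h\varphi_3(0)D_2=\tfrac13 hD_2$ reproduces, and match the terms one by one.
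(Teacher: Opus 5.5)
Your proposal is correct and follows the paper's proof. Both compare the step with exprb32, whose local error is $O(h^4)$ by Hochbruck--Ostermann. Both then use Lemma~\ref{lem:phi-order} with $\nu=3$ to show that the principal terms and the stage move by $O(h^4)$, and that the defect correction changes by $2h(\phit_3-\varphi_3)D_2=O(h^4)$ because $D_2=O(h^2)$. Your explicit bound on the stage-induced change of the defect fills in a step the paper leaves implicit.
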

\begin{proof}
Let $\Phi_h^{[3]}$ be the exprb32 step with every $\phit_j$ replaced
by~$\varphi_j$.
Then $\Phi_h^{[3]}(y(t_n))-y(t_n+h)=O(h^4)$~\citep{hochbruckostermann2009}.

By Lemma~\ref{lem:phi-order},
\[
\phit_1(hJ_n)-\varphi_1(hJ_n)=O(h^3),\quad
\phit_2(hJ_n)-\varphi_2(hJ_n)=O(h^2),\quad
\phit_3(hJ_n)-\varphi_3(hJ_n)=O(h).
\]
The principal terms change by $O(h^4)$,
\[
h(\phit_1-\varphi_1)f_n=O(h^4),\qquad
h^2(\phit_2-\varphi_2)f_{t,n}=O(h^4),
\]
and the stage moves by $O(h^4)$.
Since $D_2=O(h^2)$, the defect correction also changes by $O(h^4)$,
\[
2h(\phit_3-\varphi_3)D_2=O(h^4).
\]
The MIISO3 step therefore differs from exprb32 by $O(h^4)$.
\end{proof}

\begin{lemma}[Scaling relation between the MIISO4 defects]
\label{lem:defect-scaling}
Write $D(c)$ for the linearization residual at the internal stage with
abscissa~$c$,
\[
D(c)
=
f(t_n+ch,y_n+\delta_c)
-f_n-J_n\delta_c-ch f_{t,n},
\]
so that $D_2=D(\tfrac12)$ and $D_3=D(1)$ in~\eqref{eq:miiso4}.
For stages that approximate the local solution through first order,
\[
D(c)=c^2 h^2\,\mathcal{B}_n+O(h^3)
\]
for a bounded vector $\mathcal{B}_n$ independent of~$c$.
Hence
\[
D_3-4D_2=O(h^3).
\]
\end{lemma}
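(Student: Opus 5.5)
The plan is a second-order Taylor expansion of $f$ about $(t_n,y_n)$, evaluated at the stage increment $\delta_c$. First I will make the hypothesis on the stages concrete. A stage that approximates the local solution through first order satisfies
\[
\delta_c = c h f_n + \tfrac12 c^2 h^2 \bigl(J_n f_n + f_{t,n}\bigr) + O(h^3).
\]
In fact only $\delta_c = c h f_n + O(h^2)$ will be needed, since a correction of size $O(h^2)$ is absorbed at the end. For the MIISO4 stages this follows from the expansions already available. By Lemma~\ref{lem:phi-order} and Corollary~\ref{cor:R-order}, $\phit_1(z)=1+O(z)$ and $\phit_2(z)=\tfrac12+O(z)$, uniformly in the taper weight. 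Under Assumption~\ref{as:convergence}(ii) these scalar expansions lift to $hJ_n$ through the diagonalization, exactly as in the proof of Theorem~\ref{thm:consistency}. Hence $Y_2-y_n=\tfrac12 h f_n+O(h^2)$ and $Y_3-y_n=h f_n+O(h^2)$.

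Next I will expand $g(\tau,\delta):=f(t_n+\tau,y_n+\delta)-f_n-J_n\delta-\tau f_{t,n}$ jointly in $(\tau,\delta)$, using the bounded derivatives of Assumption~\ref{as:convergence}(i). The value $g(0,0)$ and both first partials vanish at the origin by construction. Therefore
\[
g(\tau,\delta)=\tfrac12 f_{yy}[\delta,\delta]+\tau f_{ty}\delta+\tfrac12 \tau^2 f_{tt}+O\bigl((|\tau|+\|\delta\|)^3\bigr),
\]
with all derivatives evaluated at $(t_n,y_n)$.

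Then I substitute $\tau=ch$ and $\delta=\delta_c=chf_n+O(h^2)$. The cubic remainder is $O(h^3)$ because $c\in\{\tfrac12,1\}$ is bounded. The $O(h^2)$ part of $\delta_c$ enters each quadratic term only multiplied by a factor $O(h)$, so it contributes $O(h^3)$. What remains is
\[
D(c)=c^2h^2\Bigl(\tfrac12 f_{yy}[f_n,f_n]+f_{ty}f_n+\tfrac12 f_{tt}\Bigr)+O(h^3)=c^2h^2\mathcal{B}_n+O(h^3).
\]
The bracket $\mathcal{B}_n$ does not depend on $c$, and it is bounded by Assumption~\ref{as:convergence}(i). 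Finally I take $c=1$ and $c=\tfrac12$ and subtract: $D_3-4D_2=(1-4\cdot\tfrac14)h^2\mathcal{B}_n+O(h^3)=O(h^3)$.

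The main obstacle is making the $O(h^2)$ stage error rigorous. It must be uniform in the non-holomorphic taper, and it must account for the Krylov evaluation. For the taper, I will rely on the uniformity built into Lemma~\ref{lem:blend-order}. For the Krylov evaluation, Assumption~\ref{as:convergence}(v) lets the Arnoldi error be absorbed into the $O(h^2)$ term. Beyond that, the argument needs only a first-order stage, which is why the lemma is stated under that weaker hypothesis.
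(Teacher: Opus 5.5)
Your proposal is correct and follows the paper's proof. Both arguments Taylor-expand $f$ about $(t_n,y_n)$ with $\delta_c=chf_n+O(h^2)$. The zeroth- and first-order terms cancel by construction, and the quadratic terms give exactly $\mathcal{B}_n=\tfrac12 f_{tt,n}+f_{ty,n}f_n+\tfrac12 f_{yy,n}[f_n,f_n]$. Your extra check that the MIISO4 stages $Y_2,Y_3$ satisfy this hypothesis, via $\phit_1=1+O(z)$ and $\phit_2=\tfrac12+O(z)$ lifted through the diagonalization, is a sound addition that the paper leaves implicit. One small point: your first display, with the $\tfrac12 c^2h^2$ term, actually describes second-order accuracy, but as you note, only the first-order form is used.
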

\begin{proof}
Expand $f(t_n+ch,y_n+\delta_c)$ about $(t_n,y_n)$, with
$\delta_c=ch f_n+O(h^2)$.
After subtracting $f_n+J_n\delta_c+ch f_{t,n}$, the leading terms are
\[
\frac{c^2 h^2}{2}f_{tt,n}
+c^2 h^2 f_{ty,n}f_n
+\frac{c^2 h^2}{2}f_{yy,n}[f_n,f_n],
\]
and all remaining terms are $O(h^3)$.
Their bracketed sum defines $\mathcal{B}_n$.
Substitution of $c=1$ and $c=\tfrac12$ gives the result.
\end{proof}

\begin{theorem}[Fourth-order local accuracy of MIISO4]
\label{thm:miiso4-order}
Under Assumption~\ref{as:convergence}, one MIISO4 step started from the
exact value $y(t_n)$ has local truncation error $O(h^5)$.
\end{theorem}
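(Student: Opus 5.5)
The plan follows the MIISO3 argument (Theorem~\ref{thm:miiso3-order}): compare one MIISO4 step with the exprb43 step $\Phi_h^{[4]}$ obtained by replacing every $\phit_j$ with $\varphi_j$. For that step the known result is $\Phi_h^{[4]}(y(t_n))-y(t_n+h)=O(h^5)$~\citep{hochbruckostermann2009} under Assumption~\ref{as:convergence}(i),(ii). It therefore suffices to show that the two steps differ by $O(h^5)$.

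By Corollary~\ref{cor:R-order} ($\nu=4$) and Lemma~\ref{lem:phi-order} we have $\phit_j(z)-\varphi_j(z)=O(z^{5-j})$, uniformly in the taper weight. Diagonalizing $J_n=X\Lambda X^{-1}$ with bounded condition number, these scalar bounds transfer to the matrix functions. This gives $\phit_1(hJ_n)-\varphi_1(hJ_n)=O(h^4)$, $\phit_2-\varphi_2=O(h^3)$, $\phit_3-\varphi_3=O(h^2)$ and $\phit_4-\varphi_4=O(h)$. The same bounds hold at $\tfrac12 hJ_n$.

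I would then go through the terms one at a time.
\begin{itemize}
\item The principal increment changes by $h\,O(h^4)+h^2O(h^3)=O(h^5)$.
\item The stages $Y_2$ and $Y_3$ move by $O(h^5)$. Since $f$ is Lipschitz and the $J_n\,\delta$ term is bounded, $D_2$ and $D_3$ also change by $O(h^5)$. That change enters multiplied by $h\,\phit_j$, so its contribution is negligible.
\item The remaining difference is
\[
h\bigl[(16\Delta_3-48\Delta_4)D_2+(-2\Delta_3+12\Delta_4)D_3\bigr],\qquad \Delta_j:=\phit_j(hJ_n)-\varphi_j(hJ_n).
\]
\end{itemize}

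This defect term is the main obstacle. Since $D_2,D_3=O(h^2)$, $\Delta_3=O(h^2)$ and $\Delta_4=O(h)$, a naive count gives only $h\cdot O(h)\cdot O(h^2)=O(h^4)$ from the $\Delta_4$ terms. To recover the missing power, I would group the $\Delta_4$ contributions as $12\Delta_4(D_3-4D_2)$. Lemma~\ref{lem:defect-scaling} gives $D_3-4D_2=O(h^3)$, so this term is $h\cdot O(h)\cdot O(h^3)=O(h^5)$. The $\Delta_3$ terms are already $h\cdot O(h^2)\cdot O(h^2)=O(h^5)$.

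To apply Lemma~\ref{lem:defect-scaling}, I still need the MIISO4 stages to approximate the local solution through first order. This holds because $\tfrac12 h\phit_1(\tfrac12 hJ_n)f_n=\tfrac12 hf_n+O(h^2)$ and $h\phit_1(hJ_n)f_n=hf_n+O(h^2)$, so $\delta_c=chf_n+O(h^2)$ for $c=\tfrac12,1$.

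Summing the three contributions shows the MIISO4 step differs from exprb43 by $O(h^5)$, and hence has local error $O(h^5)$. Assumption~\ref{as:convergence}(iii)--(v) keeps the taper weight and parent selection well defined along the step, and ensures that any Krylov error is of the same or higher order.
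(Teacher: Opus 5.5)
Your proposal is correct and follows essentially the same route as the paper: you compare with the exact-$\varphi$ exprb43 step, use Lemma~\ref{lem:phi-order} to show that the principal increment, the stages and the $\Delta_3$ terms change by only $O(h^5)$, and recover the missing power in the $\Delta_4$ terms by grouping them as $12\Delta_4(D_3-4D_2)$ and invoking Lemma~\ref{lem:defect-scaling}. You also check explicitly two points the paper leaves implicit: that the MIISO4 stages satisfy $\delta_c=chf_n+O(h^2)$, and that the $O(h^5)$ stage perturbation changes the defects by only $O(h^5)$; and you rightly use $\Delta_4=O(h)$ alone, without the paper's expansion $h\mathcal{E}_n+O(h^2)$.
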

\begin{proof}
Let $\Phi_h^{[4]}$ be the exprb43 step with every $\phit_j$ replaced
by~$\varphi_j$.
Then $\Phi_h^{[4]}(y(t_n))-y(t_n+h)=O(h^5)$~\citep{hochbruckostermann2009}.

By Lemma~\ref{lem:phi-order},
\[
\phit_1-\varphi_1=O(h^4),\quad
\phit_2-\varphi_2=O(h^3),\quad
\phit_3-\varphi_3=O(h^2),\quad
\phit_4-\varphi_4=O(h),
\]
at $hJ_n$ and at $\tfrac12 hJ_n$.
The principal increment and both stages move by $O(h^5)$.
Since $D_2,D_3=O(h^2)$, the $\phit_3$ correction changes by $O(h^5)$.

The $\phit_4$ term is only $O(h)$, so it would contribute $O(h^4)$ on
its own.
The exprb43 coefficients cancel the leading part through
$D_3-4D_2=O(h^3)$ (Lemma~\ref{lem:defect-scaling}).
Writing $E_4(hJ_n)=\phit_4(hJ_n)-\varphi_4(hJ_n)=h\mathcal{E}_n+O(h^2)$,
\[
h E_4(hJ_n)(-48 D_2+12 D_3)
=
12h\bigl(h\mathcal{E}_n+O(h^2)\bigr)(D_3-4D_2)
=
O(h^5).
\]
The MIISO4 step therefore differs from exprb43 by $O(h^5)$.
\end{proof}

\begin{theorem}[Global convergence]
\label{thm:global-convergence}
Under Assumption~\ref{as:convergence}, on a fixed interval $[t_0,T]$,
MIISO2, MIISO3, and MIISO4 converge with global orders two, three, and
four, respectively.
\end{theorem}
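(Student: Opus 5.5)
The plan is the standard Lady Windermere's fan argument: local error plus stability of the numerical flow, summed over $n\le (T-t_0)/h$ steps. Write $\Psi_h$ for one MIISO step, viewed as a map $y_n\mapsto y_{n+1}$. Let $p=2,3,4$ for MIISO2, MIISO3, MIISO4. Theorems~\ref{thm:consistency}, \ref{thm:miiso3-order} and~\ref{thm:miiso4-order} give the local error $\Psi_h(y(t_n))-y(t_n+h)=O(h^{p+1})$, with a constant uniform over the compact tube around the exact trajectory. This uniformity is available because Lemma~\ref{lem:blend-order} is uniform in the taper weight and Assumption~\ref{as:convergence}(i),(ii) bound the relevant derivatives and condition numbers. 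The global error $e_n=y_n-y(t_n)$ then satisfies
\[
e_{n+1}=\bigl[\Psi_h(y_n)-\Psi_h(y(t_n))\bigr]+\bigl[\Psi_h(y(t_n))-y(t_{n+1})\bigr].
\]
So everything reduces to a Lipschitz bound $\|\Psi_h(u)-\Psi_h(v)\|\le(1+Lh)\|u-v\|$ for $u,v$ in the tube. Given that bound, the discrete Gronwall inequality gives $\|e_n\|\le C e^{L(T-t_0)}h^{p}$. A routine induction shows the iterates stay in the tube once $h$ is small.

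To obtain the Lipschitz bound, I would expand $\Psi_h(u)=u+h\,\phit_1(hJ(u))f(u)+h\,\mathcal{G}_h(u)$, where $\mathcal{G}_h$ collects the $f_t$ term and the defect corrections. The defect terms are compositions of $f$, $J$ and the bounded operators $\phit_j(hJ)$ evaluated at stages that are themselves Lipschitz in $u$. Assumption~\ref{as:convergence}(i) therefore makes $\mathcal{G}_h$ Lipschitz with an $h$-independent constant. The principal term needs two facts. First, $\|\phit_j(hJ(u))\|\le\|X\|\|X^{-1}\|\max_\ell|\phit_j(h\lambda_\ell)|$ is bounded uniformly, by (ii) and (iv) and the boundedness of the scalar $\phit_j$ on the relevant set. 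Second, $u\mapsto\phit_1(hJ(u))$ is Lipschitz with constant $O(h)$ times the Lipschitz constant of $J$.

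The main obstacle is this second fact, because $R$ is non-holomorphic and its weight depends on $\omega_c(u)$. I would not use a Fréchet derivative of a matrix function. Instead I would use the eigen-decomposition directly. By (ii) and (iii) the spectral projectors onto the outlier band and the resolved band vary Lipschitz-continuously with $u$. The scalar map $(z,\omega_c)\mapsto\phit_1(z)$ is Lipschitz in both arguments, since the taper~\eqref{eq:taper} is Lipschitz with $s>1$ and the wedge weight is continuous and piecewise smooth. Its dependence on $z$ near the origin is $1+z/2+O(z^2)$ uniformly, and $\omega_c$ enters as $h\cdot(\text{Lipschitz in }u)$. Combining these, $\|\phit_1(hJ(u))-\phit_1(hJ(v))\|\le C h\|u-v\|$. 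If this direct route gets messy, I would fall back on a perturbation argument for diagonalizable matrices with separated spectral clusters, using (iii). Assumption~(v) is handled by absorbing the Arnoldi error into the local error, which is of order at least $h^{p+1}$. Collecting the pieces gives the Lipschitz bound and, with it, global orders two, three and four.
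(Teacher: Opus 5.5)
Your skeleton is exactly the paper's: local defects $O(h^{p+1})$ from Theorems~\ref{thm:consistency}, \ref{thm:miiso3-order} and~\ref{thm:miiso4-order}, a $(1+Ch)$-Lipschitz step map, and discrete Gronwall. The paper simply asserts the Lipschitz bound. The step you add to justify it fails, and the paper's one-line assertion has the same defect.

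\textbf{Where your Lipschitz argument breaks.} You pass from the uniform expansion $\phit_1(z)=1+z/2+O(z^2)$ to a Lipschitz bound on $\phit_1$. But Lemma~\ref{lem:blend-order} controls values, not difference quotients, and the blend weights vary on scales much finer than $|z|$. The outer taper is harmless: its slope is $O(1/\omega_c)$ against a parent discrepancy $O(\omega_c^{\nu+1})$. The wedge pad is not harmless.
\begin{itemize}
\item From~\eqref{eq:pade-axis} and $\partial_x|Q_N|^2\approx-2$ near the origin, $x_b(y)\approx-y^4/24$ for $N=3$ and $x_b(y)\approx-y^6/144$ for $N=4$. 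So the pad has width $\delta|x_b(y)|$.
\item The parents differ by $\pade{0}{3}-\pade{1}{2}\approx z^4/18$ and $\pade{1}{3}-\pade{0}{4}\approx z^5/96$.
\item Hence, across the pad, $|\nabla(R_D-e^{z})|\approx 13$ for MIISO3 and $\approx 15/|\mathrm{Im}\,z|$ for MIISO4. Then $|\nabla\phit_1|$ grows like $1/|\mathrm{Im}\,z|$, respectively $1/|\mathrm{Im}\,z|^{2}$. The scalar map is not Lipschitz near $0$.
\end{itemize}
For MIISO3 this only degrades your $O(h)$ constant for $u\mapsto\phit_1(hJ(u))$ to $O(1)$. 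That is still enough, since it is multiplied by $h$.

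For MIISO4 it breaks the argument. Take an $f$ admitted by Assumption~\ref{as:convergence} whose Jacobian has an outlier pair $-a(u)\pm ib$, with $a(u)$ sweeping through the pad range $a\approx h^5b^6/144$. Then $\phit_1(hJ(u))$ moves by about $h^4b^4/96$ while $a$ moves by only about $h^5b^6/1440$. So $u\mapsto h\,\phit_1(hJ(u))f(u)$ has an $O(1)$ derivative there, and the step map is not $(1+Ch)$-Lipschitz uniformly in $h$. (As described, the wedge weight also jumps across the imaginary axis, where no pad is placed, so your claim that it is continuous fails there too.) Neither your eigendecomposition route nor a spectral-cluster perturbation argument can repair this, because the defect already sits in the scalar map.

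\textbf{A repair: perturb the method, not the matrix.} Let $\Phi_h$ be the same step with every $\phit_j$ replaced by $\varphi_j$.
\begin{itemize}
\item The comparisons in the proofs of Theorems~\ref{thm:miiso3-order} and~\ref{thm:miiso4-order} never use that the starting value lies on the trajectory. Neither does the one-line analogue $h(\phit_1-\varphi_1)(hJ)f=O(h^3)$ for MIISO2.
\item Lemma~\ref{lem:phi-order}, together with Assumption~\ref{as:convergence}(ii), is uniform in the weights.
\item Hence $\sup_u\|\Psi_h(u)-\Phi_h(u)\|\le Ch^{p+1}$ over the tube. For MIISO4 this goes through $D_3-4D_2=O(h^3)$ (Lemma~\ref{lem:defect-scaling}).
\item $\Phi_h$ involves only entire functions of $hJ(u)$, so it is classically $(1+Ch)$-Lipschitz in the non-stiff regime.
\end{itemize}
Therefore
\[
\|e_{n+1}\|\le(1+Ch)\|e_n\|+2\sup_u\|\Psi_h(u)-\Phi_h(u)\|+\|\Psi_h(y(t_n))-y(t_{n+1})\|\le(1+Ch)\|e_n\|+3Ch^{p+1},
\]
and discrete Gronwall gives $O(h^p)$.

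This route never differentiates the non-holomorphic blend. It absorbs the wedge and taper effects, including the axis jump, entirely into the $Ch^{p+1}$ term. It also does not use hypothesis~(iii).
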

\begin{proof}
Theorems~\ref{thm:consistency},~\ref{thm:miiso3-order}, and
~\ref{thm:miiso4-order} give local defects $O(h^{p+1})$ for $p=2,3,4$.
Under Assumption~\ref{as:convergence}, each step map is locally
Lipschitz with constant $1+Ch$.
Write $e_n=y_n-y(t_n)$.
Then
\[
\|e_{n+1}\|\le(1+Ch)\|e_n\|+C h^{p+1}.
\]
Discrete Gronwall gives
$\max_n\|e_n\|\le C_T h^p$.
The Arnoldi error is absorbed by the Lipschitz constant $C$.
\end{proof}

Under Assumption~\ref{as:convergence}, each member of the family
attains its formal local order (Theorems~\ref{thm:consistency},
~\ref{thm:miiso3-order}, and~\ref{thm:miiso4-order}), and these local
orders accumulate into global orders two, three, and four
(Theorem~\ref{thm:global-convergence}).
The non-holomorphic character of the blend does not affect
convergence, since Section~\ref{sec:phi-order} already shows that the
modified $\phit_j$ functions match the exact $\varphi_j$ functions to
the order the proof requires.
The family thus converges at orders two, three, and four on non-stiff problems (Theorem~\ref{thm:global-convergence}), confirmed numerically in Section~\ref{sec:iga2dconv}.
Note that this theory does not by itself extend to the stiff
regime and stiff order theory, where $\phit_j$ and $\varphi_j$ diverge, and that extension is
left to future work.

\subsection{Outlier count under nonlinearity}
\label{sec:kcount}

The cutoff~$\omega_c$ of~\eqref{eq:omegac} is evaluated from the current
Jacobian at each step when it is not supplied.
The outlier boundary therefore follows the instantaneous spectrum, and
Theorem~\ref{thm:linear-reduction} applies mode by mode to the frozen
linearization $\dot\delta=J_n\delta$.
The outlier count~$k$ is fixed by the spline space, but the modal
frequencies it classifies move with the deformation.
The selective outlier identification and annihilation remain valid when the $k$ highest frequencies stay
separated from the $N-k$ resolved ones. This translates
to the gap ratio $\omega_{N-k+1}(q)/\omega_{N-k}(q)$ staying above one as $q$ evolves.

For the undamped second-order system $M\ddot q+f_{\mathrm{int}}(q)=0$,
with symmetric positive definite mass~$M$ and tangent stiffness
$K(q)=\partial f_{\mathrm{int}}/\partial q$, the state-space Jacobian has
purely imaginary eigenvalues $\pm i\omega_j(q)$, where $\omega_j^2$ are
the generalized eigenvalues of the pencil $(K(q),M)$, ordered
$0\le\omega_1\le\cdots\le\omega_N$.
Since $|\lambda_i(hJ)|=h\omega_i$, the cutoff~\eqref{eq:omegac} reduces
to $\omega_c=h\,\omega_{N-k+1}(q)$, with the frequency ordering being the reverse
of the decreasing-magnitude ordering of Section~\ref{sec:outliers}.
The ratio $\omega_{N-k+1}(q)/\omega_{N-k}(q)$ is the \emph{gap ratio}.
Validity of the outlier mode tracking is equivalent to this ratio
staying above one.

\begin{proposition}[Validity of the outlier mode tracking]
\label{prop:k-persistence}
Let $M$ be symmetric positive definite, let $K_0$ and $K$ be symmetric,
and let $\lambda_1\le\cdots\le\lambda_N$ be the generalized eigenvalues
of $(K_0,M)$.
Suppose the discretization has $k$ outliers, separated by
\[
\gamma:=\lambda_{N-k+1}-\lambda_{N-k}>0.
\]
Either of the following implies that the $k$ largest generalized
eigenvalues of $(K,M)$ are separated from the remaining $N-k$ by a
positive gap, so that the cutoff~\eqref{eq:omegac} selects exactly $k$
modes.
\begin{enumerate}
\item[(i)]
$2\varepsilon<\gamma$, where
$\varepsilon:=\bigl\|M^{-1/2}(K-K_0)M^{-1/2}\bigr\|_2$.
\item[(ii)]
There exist $\sigma_-\le\sigma_+$ with
\[
(1+\sigma_-)\,x^{\top}K_0 x
\;\le\;
x^{\top}K x
\;\le\;
(1+\sigma_+)\,x^{\top}K_0 x
\qquad\text{for all }x,
\]
and
$\dfrac{1+\sigma_+}{1+\sigma_-}<\dfrac{\lambda_{N-k+1}}{\lambda_{N-k}}$.
\end{enumerate}
Under~(i) the largest principal angle $\vartheta$ between the
$k$-dimensional outlier eigenspaces of $(K,M)$ and of $(K_0,M)$
(the largest of the canonical angles between those subspaces) obeys
$\sin\vartheta\le\varepsilon/(\gamma-\varepsilon)$.
\end{proposition}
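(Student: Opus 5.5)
The plan is to reduce both criteria to standard symmetric eigenvalue perturbation by passing to the congruent standard problem. Set $A_0=M^{-1/2}K_0M^{-1/2}$ and $A=M^{-1/2}KM^{-1/2}$. These are symmetric, and their ordinary eigenvalues coincide with the generalized eigenvalues of $(K_0,M)$ and $(K,M)$ respectively. Let $\mu_1\le\cdots\le\mu_N$ denote the eigenvalues of $A$. Generalized eigenvectors $x$ of the pencils correspond to eigenvectors $M^{1/2}x$ of $A_0$ and $A$. This is the natural meaning of the principal angles between the outlier eigenspaces, namely angles measured in the $M$-inner product.

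For (i), apply Weyl's inequality to $A=A_0+E$ with $\|E\|_2=\varepsilon$. This gives $|\mu_j-\lambda_j|\le\varepsilon$ for every $j$. Consequently
\[
\mu_{N-k+1}-\mu_{N-k}\ \ge\ \lambda_{N-k+1}-\lambda_{N-k}-2\varepsilon\ =\ \gamma-2\varepsilon\ >\ 0 .
\]
So the top $k$ eigenvalues of $(K,M)$ are strictly separated from the rest, and the cutoff $h\sqrt{\mu_{N-k+1}}$ selects exactly $k$ modes.

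For the angle bound, use the Davis--Kahan $\sin\Theta$ theorem. Take $P$ as the spectral projector of $A_0$ onto its top $k$ eigenvectors. Take $Q$ as the spectral projector of $A$ onto its top $k$ eigenvectors. The Davis--Kahan version applies when the perturbed top eigenvalues lie in $[\lambda_{N-k+1}-\varepsilon,\infty)$ and the unperturbed bottom eigenvalues lie in $(-\infty,\lambda_{N-k}]$. The separation between these sets is at least $\gamma-\varepsilon$. It yields
\[
\|\sin\Theta\|_2\le\frac{\|E\|_2}{\gamma-\varepsilon}=\frac{\varepsilon}{\gamma-\varepsilon}.
\]
I will use the one-sided separation rather than $\gamma-2\varepsilon$, because it gives exactly the stated constant.

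For (ii), observe that the quadratic-form bounds transfer to $A$ and $A_0$ via $x=M^{-1/2}u$:
\[
(1+\sigma_-)A_0\preceq A\preceq(1+\sigma_+)A_0 .
\]
Courant--Fischer monotonicity then gives $\mu_j\ge(1+\sigma_-)\lambda_j$ and $\mu_j\le(1+\sigma_+)\lambda_j$. Here I am assuming $K_0$ is positive semidefinite with $1+\sigma_->0$, which is implicit in the ratio condition; scaling by a positive constant preserves the ordering. Hence
\[
\mu_{N-k+1}\ \ge\ (1+\sigma_-)\lambda_{N-k+1}\ >\ (1+\sigma_+)\lambda_{N-k}\ \ge\ \mu_{N-k}.
\]
The strict middle inequality is exactly the hypothesis $\frac{1+\sigma_+}{1+\sigma_-}<\frac{\lambda_{N-k+1}}{\lambda_{N-k}}$. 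This requires $\lambda_{N-k}>0$; the case $\lambda_{N-k}=0$ is trivial. It also implicitly requires positivity so that the multiplication preserves order.

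The main obstacle is the sign bookkeeping in (ii): the ratio hypothesis only makes sense, and the monotonicity argument only works, when $K_0$ is positive semidefinite and $1+\sigma_->0$. I would state these as the standing structural-dynamics assumptions, with $K_0$ the tangent stiffness. A secondary point is the choice of gap in Davis--Kahan, which I resolve with the one-sided version above.
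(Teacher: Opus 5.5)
Your proposal is correct and follows the paper's proof essentially step for step: congruence by $M^{-1/2}$, Weyl's inequality for (i), Courant--Fischer (Rayleigh-quotient) monotonicity for (ii), and Davis--Kahan with the one-sided separation $\gamma-\varepsilon$ for the angle bound. Your extra care is warranted on two points. The paper uses $1+\sigma_->0$ tacitly, and without it (ii) as written fails: for example, $K=0$ with $1+\sigma_-=-1$ and $1+\sigma_+=1$ satisfies the hypotheses but has no gap. Your reading of the angles in the $M$-inner product also matches the $M$-principal angles used in the numerics.
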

\begin{proof}
Congruence by $M^{-1/2}$ sends the pencils to
\[
A_0=M^{-1/2}K_0 M^{-1/2},\qquad
A=M^{-1/2}K M^{-1/2},
\]
preserving the eigenvalues and the bounds of~(ii).

Under~(i), Weyl's inequality~\citep{stewartsun1990} gives
$|\lambda_j(A)-\lambda_j(A_0)|\le\varepsilon$ for every~$j$, so
when $2\varepsilon<\gamma$,
\[
\lambda_{N-k}(A)\le\lambda_{N-k}+\varepsilon
<\lambda_{N-k+1}-\varepsilon\le\lambda_{N-k+1}(A).
\]
Under~(ii), the Rayleigh quotient bounds give
$(1+\sigma_-)\lambda_j\le\lambda_j(A)\le(1+\sigma_+)\lambda_j$, so
\[
\lambda_{N-k}(A)\le(1+\sigma_+)\lambda_{N-k}
<(1+\sigma_-)\lambda_{N-k+1}\le\lambda_{N-k+1}(A).
\]
In both cases the gap stays open, so $\lambda_{N-k+1}(A)$ cuts off
exactly $k$ eigenvalues.
Under~(i), Davis--Kahan~\citep{daviskahan1970} gives
$\sin\vartheta\le\varepsilon/(\gamma-\varepsilon)$.
\end{proof}

The sandwiching inequalities of hypothesis~(ii) place~$K$ between $(1+\sigma_-)K_0$ and $(1+\sigma_+)K_0$ in the positive-semidefinite ordering on symmetric matrices, the Loewner order. The ratio $(1+\sigma_+)/(1+\sigma_-)$, which we call the \emph{Loewner contrast}, measures how far the deformed stiffness departs from the reference in that ordering.

\begin{corollary}[Cutoff placement under hypothesis~(ii)]
\label{cor:loewner-cutoff}
Under hypothesis~(ii) of Proposition~\ref{prop:k-persistence}, the value
\begin{equation}
\omega_c^{\mathrm{L}}(q)
=
h\,\sqrt{1+\sigma_-(q)}\;
\omega_{N-k+1}(K_0,M)
\label{eq:loewner-cutoff}
\end{equation}
lies in the outlier gap of the current pencil.
It may therefore be used in place of~\eqref{eq:omegac} whenever~(ii)
holds.
\end{corollary}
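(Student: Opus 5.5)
The plan is to reuse the Rayleigh-quotient sandwich from part~(ii) of the proof of Proposition~\ref{prop:k-persistence}. First I convert hypothesis~(ii) into two-sided bounds on each generalized eigenvalue of the current pencil. Then I check that the proposed value sits strictly between the $(N-k)$-th and $(N-k+1)$-th current frequencies. Throughout, write $\lambda_j=\omega_j(K_0,M)^2$ for the reference generalized eigenvalues and $\lambda_j(q)=\omega_j(q)^2$ for those of $(K(q),M)$.

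\textbf{Step 1 (eigenvalue bounds).} Apply congruence by $M^{-1/2}$ to both pencils and use the Courant--Fischer min--max characterization. Because the inequality
\[
(1+\sigma_-)\,x^\top K_0x\le x^\top K x\le(1+\sigma_+)\,x^\top K_0 x
\]
holds for every $x$, it holds for the Rayleigh quotients on every subspace. This gives
\[
(1+\sigma_-)\lambda_j\le\lambda_j(q)\le(1+\sigma_+)\lambda_j
\]
for each $j$, exactly as in that proof. The argument implicitly needs $1+\sigma_->0$ and $\lambda_j\ge 0$ so that square roots make sense. This holds in the structural setting, where $K_0$ is positive definite, and I will state it explicitly.

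\textbf{Step 2 (lower side).} Take $j=N-k+1$ and take square roots. Multiplying by $h$ gives
\[
\omega_c^{\mathrm L}=h\sqrt{1+\sigma_-}\,\omega_{N-k+1}(K_0,M)\le h\,\omega_{N-k+1}(q).
\]
So $\omega_c^{\mathrm L}$ does not exceed the smallest current outlier frequency, which by~\eqref{eq:omegac-spd} is the true cutoff $\omega_c$.

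\textbf{Step 3 (upper side).} Take $j=N-k$. Step~1 gives $h\,\omega_{N-k}(q)\le h\sqrt{1+\sigma_+}\,\omega_{N-k}(K_0,M)$. The contrast condition in~(ii), $(1+\sigma_+)\lambda_{N-k}<(1+\sigma_-)\lambda_{N-k+1}$, is equivalent after square roots to
\[
\sqrt{1+\sigma_+}\,\omega_{N-k}<\sqrt{1+\sigma_-}\,\omega_{N-k+1}.
\]
Combining the two, $h\,\omega_{N-k}(q)<\omega_c^{\mathrm L}\le h\,\omega_{N-k+1}(q)$. Hence $\omega_c^{\mathrm L}$ lies in the outlier gap of the current pencil.

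\textbf{Step 4 (substitution).} Since the value lies in the gap, the taper~\eqref{eq:taper} placed at $\omega_c^{\mathrm L}$ classifies exactly the same $k$ modes as the true cutoff. All modes with $|z|\ge\omega_c^{\mathrm L}$ are the $k$ outliers, and all others are resolved.

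There are two places that need care, though neither is a real obstacle since each is a one-line inequality. The first is the non-strict inequality at the top of the gap. If $\omega_c^{\mathrm L}$ equals $h\,\omega_{N-k+1}(q)$, that outlier still receives $\theta=1$, because $|z|\ge\omega_c$ is what the taper requires. The second is that resolved modes must stay below the transition start $r\,\omega_c^{\mathrm L}$ for exact non-dissipation. Neither the statement nor the proof requires that, so I will note it as a remark rather than prove it.
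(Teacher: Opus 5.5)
Your proposal is correct and is essentially the paper's argument: the corollary follows from the chain $\lambda_{N-k}(A)\le(1+\sigma_+)\lambda_{N-k}<(1+\sigma_-)\lambda_{N-k+1}\le\lambda_{N-k+1}(A)$ in the proof of Proposition~\ref{prop:k-persistence}(ii), after taking square roots and multiplying by $h$, which is exactly your Steps 1--3. One small correction: positive definiteness of $K_0$ gives $\lambda_j>0$ but does not by itself give $1+\sigma_->0$, so that condition should be assumed outright. It is the same condition needed to pass from the ratio form of the contrast condition to the product form you use.
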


When the stiffness $K$ is a spatially varying scalar multiple of the
reference $K_0$, then $1+\sigma_+$ and $1+\sigma_-$ are the pointwise
maximum and minimum of that scalar factor. They can be seen as measures of the relative maximum stiffness and softening respectively.
Uniform stiffening satisfies hypothesis~(ii) for any magnitude, since
the scalar is constant and the Loewner contrast is one.
Non-uniform stiffening can fail the condition when the ratio of the
maximum to minimum stiffness factor exceeds the eigenvalue gap ratio
$\lambda_{N-k+1}/\lambda_{N-k}$.

Either~(i) or~(ii) keeps $\gamma>0$, so $\omega_c=h\,\omega_{N-k+1}$ still
cuts off exactly $k$ eigenvalues, and the outlier count continues to
identify the $k$ outlier modes.
These conditions are sufficient but not necessary, since the gap can remain open even when neither applies, as Section~\ref{sec:kpersist} demonstrates.

When the gap closes, $\omega_{N-k}(q)$ and $\omega_{N-k+1}(q)$ meet and
no cutoff can separate the two bands.
A resolved mode then lies above~$\omega_c$ and receives damping
from~$R_D$ even though it is not an outlier of the rest configuration,
while a rest-outlier mode below~$\omega_c$ goes undamped.
Recomputing $\omega_c$ each step does not repair this, since the failure
is a reordering of the spectrum.
This misassignment of modes between the two parents is \emph{mode mistracking}.
Section~\ref{sec:kpersist} tracks the gap ratio and subspace leakage on a deforming bar, records when the gap closes, and characterizes the resulting mode mistracking.

\subsection{Nonlinear and nonautonomous problems}
\label{sec:nonlinear-ext}

The Rosenbrock steps of Section~\ref{sec:method} are written for a
general~$f$, linear or nonlinear, autonomous or not.
The map~$R$, the parents of Table~\ref{tab:parents}, and the
blend~\eqref{eq:rblend} do not depend on the problem type.

If $f$ is linear, the defects $D_2$ and $D_3$ vanish and
Theorem~\ref{thm:linear-reduction} applies.
Each eigenmode of the frozen Jacobian is advanced by~$R(z)$.
If $f$ is nonlinear, the same $R$ is applied to $J_n=f_y(t_n,y_n)$.
Lemma~\ref{lem:blend-order} and Lemma~\ref{lem:phi-order} give the
Taylor order of $\phit_j$ independently of this linearization, and the
local and global orders of Section~\ref{sec:convergence} follow under
Assumption~\ref{as:convergence}.
When the Jacobian evolves, Proposition~\ref{prop:k-persistence} gives a
sufficient condition under which the outlier mode tracking remains valid.

The term $f_t$ already appears in the Rosenbrock steps of
Section~\ref{sec:phi}, so the method already accounts for
nonautonomous systems by construction.
On an autonomous problem $f_t=0$ and the corresponding Arnoldi run is
omitted, as in Algorithms~\ref{alg:miiso3}--\ref{alg:miiso4}.
Provided the Arnoldi error meets Assumption~\ref{as:convergence}(v),
Krylov subspace approximation does not affect the theory of the
previous sections.

\section{Numerical experiments}
\label{sec:numerics}

The numerical experiments below check the claims of
Sections~\ref{sec:pade-chapter}--\ref{sec:theory}.
We begin with linear problems, comparing the discrete amplification map of
MIISO with generalized-$\alpha$ on isogeometric spectra
(Section~\ref{sec:modal}), then tracking resolved and outlier mode behavior
against a spatial outlier-removal baseline~\citep{hiemstra2021}
(Section~\ref{sec:linear-dynamics}).
Section~\ref{sec:fade-numerics} examines how outlier damping weakens under
step-size refinement, and Section~\ref{sec:kpersist} tests outlier tracking
as nonlinearity increases on a cubic-hardening bar.
The remaining subsections test a nonlinear isogeometric membrane,
measuring temporal order, Krylov-subspace dimension sensitivity, and wall time against
generalized-$\alpha$.

\subsection{Fully discrete modal amplification}
\label{sec:modal}

For a linear problem, Theorem~\ref{thm:linear-reduction} states that
each eigenmode is advanced by the amplification map~$R$ of Section~\ref{sec:blend}.
We plot this amplification across a variety of geometries, boundary conditions, and spline orders, with MIISO using $\omega_c$ from the outlier count~$k$
(Section~\ref{sec:cutoff}, taper $r=0.99$, $s=4$) and
generalized-$\alpha$~\citep{chunghulbert1993} compared at
$\rho_\infty\in\{0,0.5,1\}$.
All methods share one~$h$ per scenario, chosen for about two steps per
period on the last resolved mode, so that by Corollary~\ref{cor:dtguide}
the first outlier lies above $z^\star(1/2)$ in the strong-dissipation
region of the blend.

\begin{figure}[H]
\centering
\includegraphics[width=\textwidth]{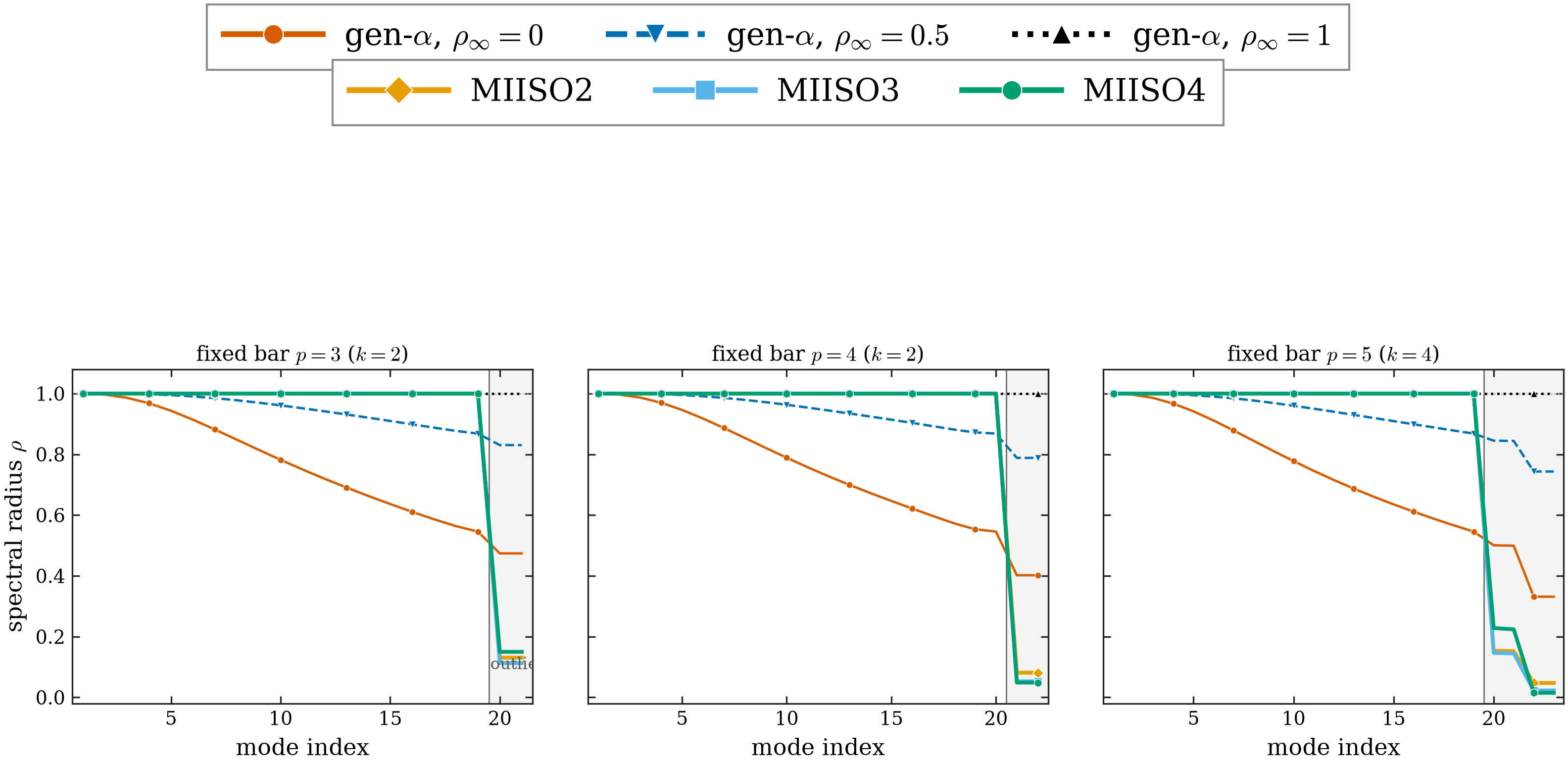}
\caption{Fully discrete modal amplification for fixed--fixed
one-dimensional bars at $p=3,4,5$.
Color key shared with
Figures~\ref{fig:modal-gallery-bar-free}--\ref{fig:modal-gallery-cube}.}
\label{fig:modal-gallery-bar-fixed}
\end{figure}

\begin{figure}[H]
\centering
\includegraphics[width=\textwidth]{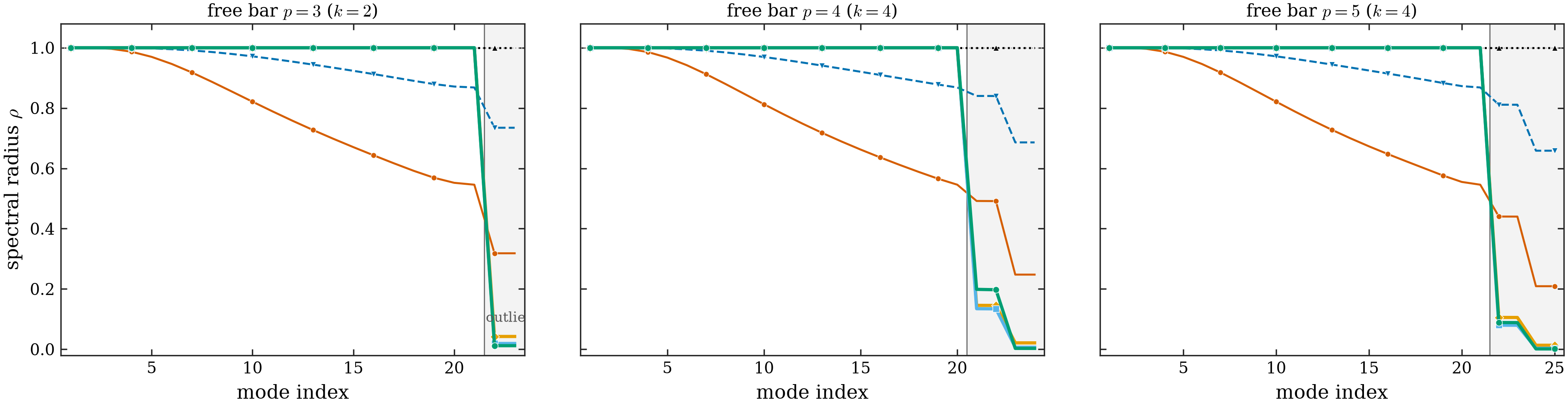}
\caption{Free--free one-dimensional bars at $p=3,4,5$, same setup as
Figure~\ref{fig:modal-gallery-bar-fixed}.}
\label{fig:modal-gallery-bar-free}
\end{figure}

\begin{figure}[H]
\centering
\includegraphics[width=\textwidth]{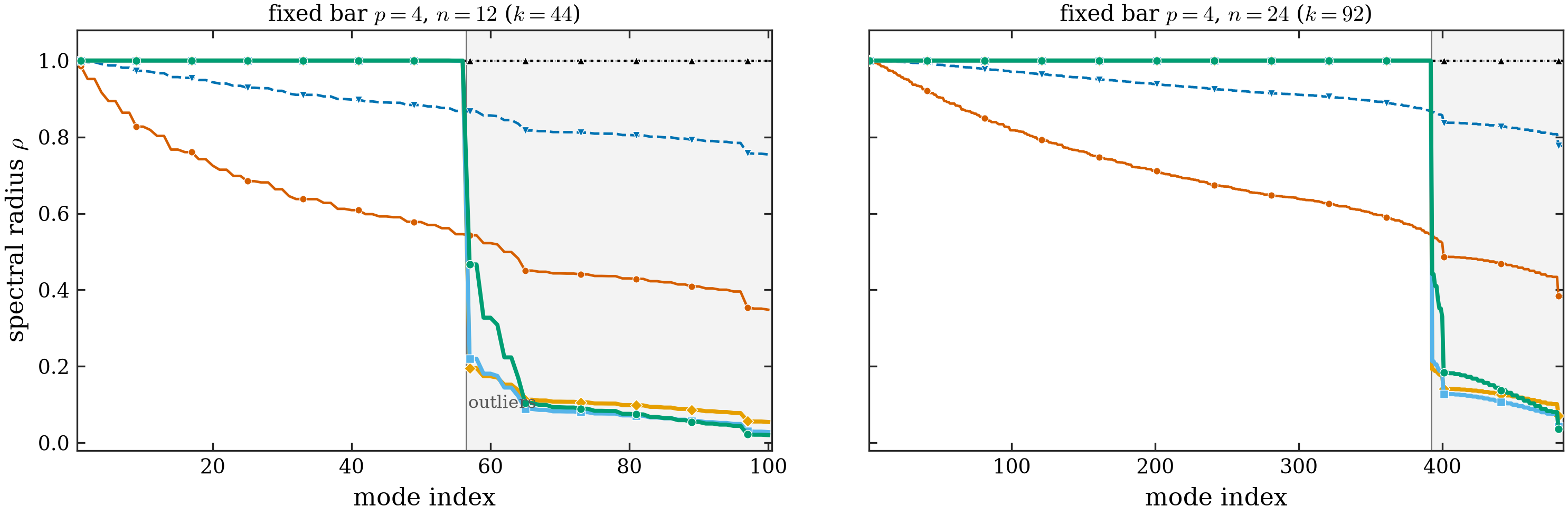}
\caption{Fixed--fixed two-dimensional bars ($n=12$ and $n=24$), same setup.}
\label{fig:modal-gallery-bar-fixed-2d}
\end{figure}

\begin{figure}[H]
\centering
\includegraphics[width=0.72\textwidth]{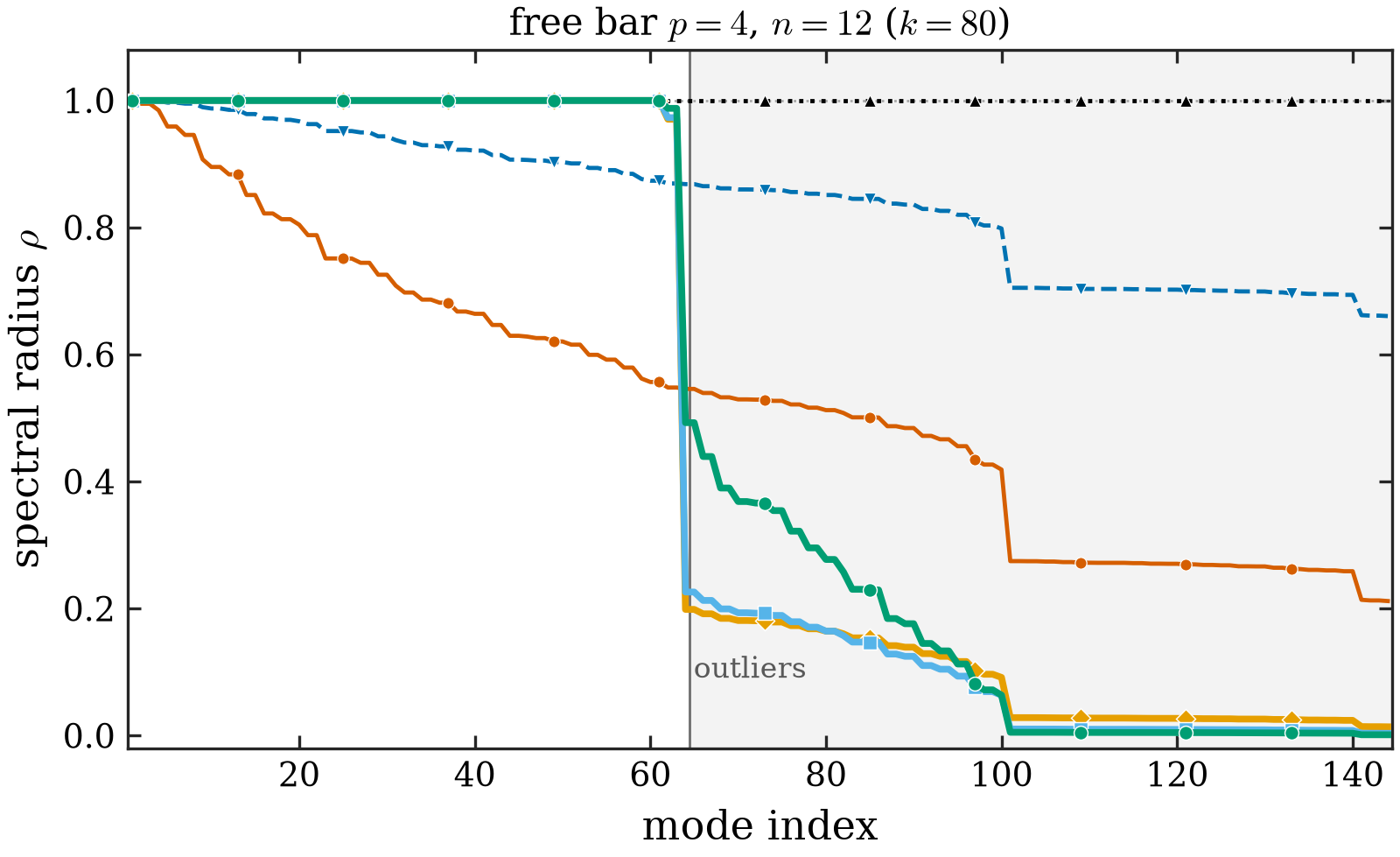}
\caption{Free--free two-dimensional bar ($n=12$), same setup.}
\label{fig:modal-gallery-bar-free-2d}
\end{figure}

\begin{figure}[H]
\centering
\includegraphics[width=\textwidth]{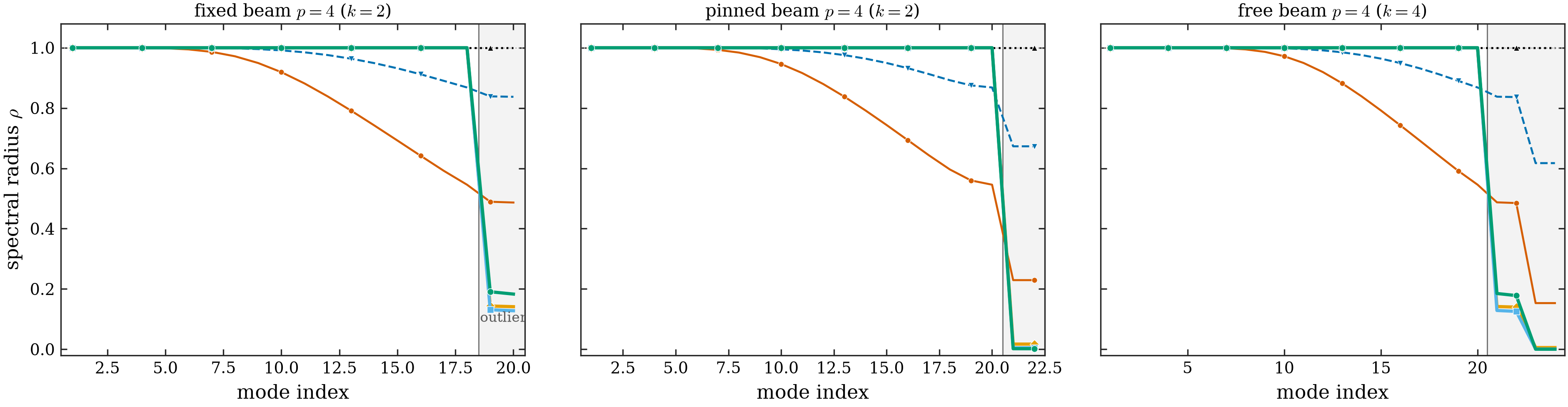}
\caption{Fixed, pinned, and free beams at $p=4$, same setup.}
\label{fig:modal-gallery-beams}
\end{figure}

\begin{figure}[H]
\centering
\includegraphics[width=\textwidth]{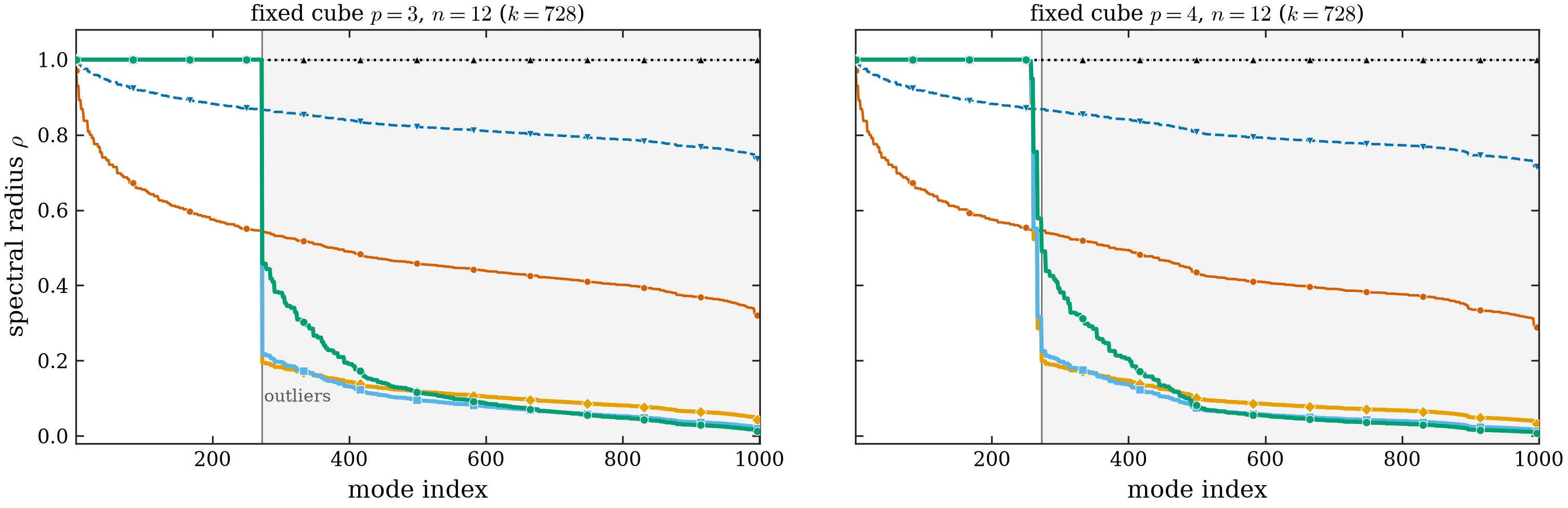}
\caption{Fixed--fixed unit cube, tensor-product splines at $p=3$ and
$p=4$ ($n=12$, $N=1728$, $k=728$), same setup.}
\label{fig:modal-gallery-cube}
\end{figure}

Generalized-$\alpha$ damps all modes when $\rho_\infty<1$
(Proposition~\ref{prop:global-dissipation}) and leaves outliers undamped
when $\rho_\infty=1$, whereas MIISO has $|R|=1$ below~$\omega_0$ and uses $R_D$ above~$\omega_c$
(Propositions~\ref{prop:zero-dissipation} and~\ref{prop:Lstable}).

\subsection{Resolved and outlier mode evolution}
\label{sec:linear-dynamics}

Section~\ref{sec:modal} showed the selective behavior of MIISO mode by mode
on a collection of IGA discretizations.
Here we integrate the linear membrane
\begin{equation}
M\ddot q+Kq=0
\label{eq:linear-membrane}
\end{equation}
on the unit square with clamped boundary and measures
how the resolved and outlier bands evolve in time.
The test parallels the linear benchmark of~\citet{hiemstra2021},
in which outliers are removed by projecting the solution onto the
resolved eigenspace before each time step.

Let $(\varphi_n,\omega_n)_{n=1}^{N}$ be the $M$-orthonormal eigenpairs of
the rest pencil $(K,M)$, ordered so that
$0\le\omega_1\le\cdots\le\omega_N$.
With the outlier count~$k$ of Section~\ref{sec:outliers}, the
\emph{resolved band} is $\{\varphi_1,\ldots,\varphi_{N-k}\}$ and the
\emph{outlier band} is $\{\varphi_{N-k+1},\ldots,\varphi_N\}$.
For a displacement~$q$, write
\begin{equation}
c_n
=
\varphi_n^{\top}M q,
\qquad
n=1,\ldots,N,
\label{eq:modal-coeff}
\end{equation}
and define the \emph{resolved energy} and \emph{outlier energy}
\begin{equation}
E_{\mathrm{res}}(q)
=
\Bigl(\sum_{n=1}^{N-k}c_n^{2}\Bigr)^{1/2},
\qquad
E_{\mathrm{out}}(q)
=
\Bigl(\sum_{n=N-k+1}^{N}c_n^{2}\Bigr)^{1/2}.
\label{eq:band-energy}
\end{equation}
These are the Euclidean norms of the modal coefficient vectors on the
resolved and outlier bands, respectively.
On an undamped exact evolution of~\eqref{eq:linear-membrane} both
are constant in time.

The discretization is a maximum-continuity multivariate B-spline basis
of degree~$p=4$ on an $8\times 8$ mesh with $N=100$ free dofs and outlier count $k=36$.
The initial displacement is
\begin{equation}
q(0)
=
\varphi_1
+
0.05\,
\frac{1}{\sqrt{k}}
\sum_{n=N-k+1}^{N}\varphi_n,
\qquad
\dot q(0)=0,
\label{eq:polluted-ic}
\end{equation}
so $E_{\mathrm{res}}(q(0))=1$ and $E_{\mathrm{out}}(q(0))=5\times 10^{-2}$.
The final time is eight periods of the fundamental,
$T=16\pi/\omega_1$, and the step size is $h=\pi/\omega_{N-k}$,
giving two steps per period on the last resolved mode,
as in Section~\ref{sec:modal}.

Seven integrators are compared at this~$h$.
Outlier subspace removal applies a Galerkin projection onto
$\operatorname{span}\{\varphi_1,\ldots,\varphi_{N-k}\}$
and then advances the reduced system with the two-stage
Gauss--Legendre fully implicit Runge-Kutta method (order~4).
On linear problems the amplification factor of this method is the
Pad\'e~$[2/2]$ approximant, which is exactly the conservative parent
$R_C$ of MIISO3 and MIISO4.
MIISO2, MIISO3, and MIISO4 operate on the full space with fixed cutoff
$\omega_c=h\omega_{N-k+1}$.
Generalized-$\alpha$ on the full space is tested at
$\rho_\infty\in\{0,0.5,1\}$.
Table~\ref{tab:band-energy} reports $E_{\mathrm{res}}(q(T))$ and
$E_{\mathrm{out}}(q(T))$.

\begin{table}[H]
\centering
\caption{Resolved and outlier band energies~\eqref{eq:band-energy} at
$t=T$ for~\eqref{eq:linear-membrane} with~\eqref{eq:polluted-ic}.
Spatial projection marks the resolved-eigenspace projection}
\label{tab:band-energy}
\begin{tabular}{@{}lrr@{}}
\toprule
Method & $E_{\mathrm{res}}(q(T))$ & $E_{\mathrm{out}}(q(T))$ \\
\midrule
spatial projection + Gauss--Legendre (2-stage)
 & $1.000\times 10^{0}$ & $1.684\times 10^{-15}$ \\
MIISO2
 & $8.072\times 10^{-1}$ & $6.241\times 10^{-15}$ \\
MIISO3
 & $1.000\times 10^{0}$ & $6.113\times 10^{-15}$ \\
MIISO4
 & $1.000\times 10^{0}$ & $1.610\times 10^{-15}$ \\
gen-$\alpha$ ($\rho_\infty=0$)
 & $3.585\times 10^{-1}$ & $4.622\times 10^{-16}$ \\
gen-$\alpha$ ($\rho_\infty=0.5$)
 & $5.693\times 10^{-1}$ & $1.560\times 10^{-11}$ \\
gen-$\alpha$ ($\rho_\infty=1$)
 & $8.072\times 10^{-1}$ & $3.551\times 10^{-2}$ \\
\bottomrule
\end{tabular}
\end{table}

Outlier subspace removal drives $E_{\mathrm{out}}$ to $1.684\times 10^{-15}$
at $t=T$, machine roundoff by construction, and leaves
$E_{\mathrm{res}}$ at its initial value.
MIISO3 and MIISO4 match both numbers on the full, unmodified
discretization, as expected from the shared Pad\'e~$[2/2]$ parent on
the resolved band.
MIISO2 also clears the outlier band.
The drop of $E_{\mathrm{res}}$ to $8.072\times 10^{-1}$ under MIISO2
reflects the coarser Pad\'e~$[1/1]$ parent at this~$h$ and its lower order.
The conservative generalized-$\alpha$ run at $\rho_\infty=1$ reaches the same
resolved energy.
Generalized-$\alpha$ at $\rho_\infty=0$ and $0.5$ also clear the
outliers, but only by damping the resolved band as well
($E_{\mathrm{res}}=3.585\times 10^{-1}$ and $5.693\times 10^{-1}$).
At $\rho_\infty=1$ the resolved energy matches the MIISO2 value, yet
$E_{\mathrm{out}}=3.551\times 10^{-2}$ remains of the same
order as the initial seed.
MIISO therefore achieves the same outlier annihilation as
spatial subspace removal without projecting out any degrees of freedom.

\subsection{Withdrawal of annihilation at step sizes}
\label{sec:fade-numerics}

Corollary~\ref{cor:budget} predicts that the per-step outlier modulus
approaches one at rate $h^3$ for MIISO2 and MIISO3 and $h^5$ for MIISO4
as $h\to 0$.
This withdrawal of annihilation is a direct consequence of consistency.
A consistent method satisfies $R(0)=1$, so the per-step factor must
approach one as $h\to 0$ for any fixed frequency.
This section confirms the predicted rates and shows where the withdrawal
region lies relative to the step sizes used in practice.

Two decoupled oscillators at $\varpi_{\mathrm{phys}}=3$ and
$\varpi_{\mathrm{outlier}}=60$ are advanced $40$ steps with
$\omega_c=h\varpi_{\mathrm{outlier}}$.
Each mode is multiplied by $|R(ih\varpi)|$ per step, so the retained
outlier amplitude after $40$ steps is $|R|^{40}$, and
Table~\ref{tab:fade} reports both quantities as $h$ is refined.
The per-step outlier modulus rises toward one as $h$ decreases.
MIISO4 reaches near-unit modulus at larger~$h$ than MIISO2 or MIISO3
because its withdrawal rate is faster ($h^5$ versus $h^3$), while MIISO2
retains stronger per-step damping at small step sizes.
At $h=0.01$ the asymptotic estimate~\eqref{eq:budget} agrees with the
measured values to three digits.
The resolved mode is returned at amplitude $1.000000$ in every run.

\begin{table}[H]
\centering
\caption{Outlier dissipation under step-size refinement for two
oscillators at $\varpi=3$ and $\varpi=60$ ($40$ steps,
$\omega_c=h\varpi_{\mathrm{outlier}}$).}
\label{tab:fade}
\setlength{\tabcolsep}{5pt}
\begin{tabular}{@{}cccccccc@{}}
\toprule
& & \multicolumn{3}{c}{$|R(ih\varpi_{\mathrm{outlier}})|$}
& \multicolumn{3}{c}{retained after $40$ steps} \\
\cmidrule(lr){3-5}\cmidrule(l){6-8}
$h$ & $h\varpi_{\mathrm{outlier}}$
& MIISO2 & MIISO3 & MIISO4 & MIISO2 & MIISO3 & MIISO4 \\
\midrule
$0.10$ & $6.0$ & $0.0555$ & $0.0290$ & $0.0210$ & $1.1\!\times\!10^{-17}$ & $1.3\!\times\!10^{-18}$ & $5.4\!\times\!10^{-19}$ \\
$0.06$ & $3.6$ & $0.1525$ & $0.1451$ & $0.2250$ & $2.7\!\times\!10^{-18}$ & $2.7\!\times\!10^{-18}$ & $4.1\!\times\!10^{-18}$ \\
$0.05$ & $3.0$ & $0.2169$ & $0.2626$ & $0.6944$ & $8.6\!\times\!10^{-19}$ & $3.4\!\times\!10^{-18}$ & $4.6\!\times\!10^{-7}$ \\
$0.04$ & $2.4$ & $0.3280$ & $0.5312$ & $0.8966$ & $2.9\!\times\!10^{-18}$ & $1.0\!\times\!10^{-11}$ & $1.3\!\times\!10^{-2}$ \\
$0.03$ & $1.8$ & $0.5253$ & $0.9303$ & $0.9763$ & $6.5\!\times\!10^{-12}$ & $5.6\!\times\!10^{-2}$ & $3.8\!\times\!10^{-1}$ \\
$0.02$ & $1.2$ & $0.8115$ & $0.9761$ & $0.9976$ & $2.4\!\times\!10^{-4}$ & $3.8\!\times\!10^{-1}$ & $9.1\!\times\!10^{-1}$ \\
$0.01$ & $0.6$ & $0.9842$ & $0.9983$ & $1.0000$ & $5.3\!\times\!10^{-1}$ & $9.3\!\times\!10^{-1}$ & $9.98\!\times\!10^{-1}$ \\
\bottomrule
\end{tabular}
\end{table}

Figure~\ref{fig:withdrawal} shows the withdrawal curve for each method
across the full range of $z=h\varpi_{\mathrm{outlier}}$.
At small $z$ every curve approaches one, consistent with $R(0)=1$.
The dotted line marks $|R_D|=\tfrac{1}{2}$, and
the orange region is where all methods have $|R_D|>\tfrac{1}{2}$ and are above the dotted line.
The per-step factor decreases monotonically as $z$ grows and passes well
below one-half before reaching the step sizes used in practice.
The blue region shows the range of $h\varpi_{\mathrm{outlier}}$
values of the IGA setups of Section~\ref{sec:modal} at the step
sizes used there. All lie in the strong-damping zone, so the
withdrawal of annihilation is not an issue for those runs.

\begin{figure}[H]
  \centering
  \includegraphics[width=0.78\textwidth]{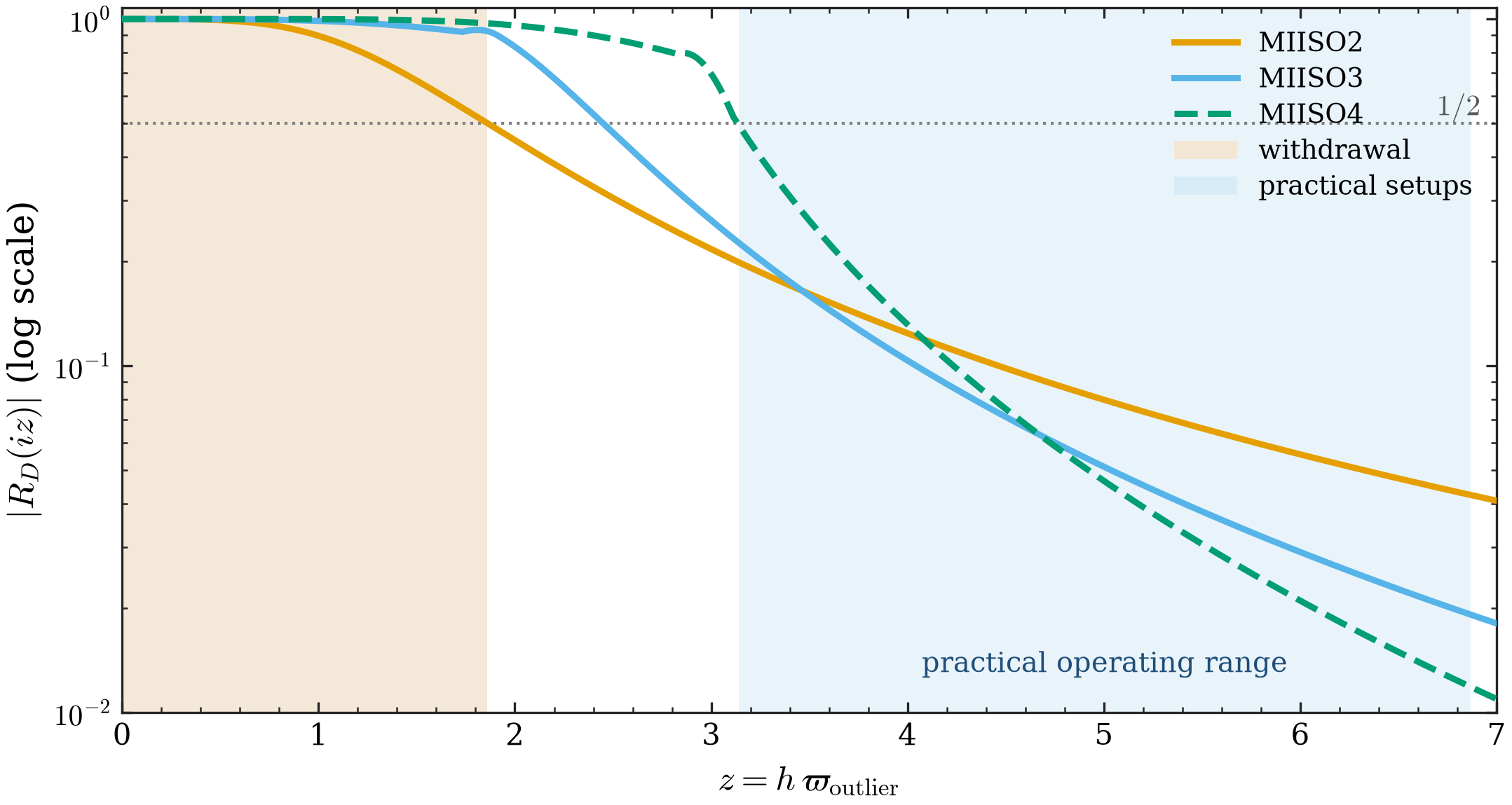}
  \caption{Per-step $|R_D(iz)|$ versus $z=h\varpi_{\mathrm{outlier}}$ for
    MIISO2--4.}
  \label{fig:withdrawal}
\end{figure}

\subsection{Outlier mode tracking under large deformation}
\label{sec:kpersist}
\label{sec:cutoff-failure}

Proposition~\ref{prop:k-persistence} gives sufficient conditions under
which the outlier gap stays open as the Jacobian evolves.
The runs below test those conditions and show what happens when the gap
closes.

The test problem is the degree-4 fixed--fixed bar of
Section~\ref{sec:modal} ($N=22$, $k=2$, rest gap ratio $1.575$)
with cubic-hardening strain energy
\[
W(u)=\int_0^1\Bigl[\tfrac12(u')^2+\tfrac14\alpha\,(u')^4\Bigr]dx,
\qquad \alpha=1.
\]
The bar is released from a resolved half-sine of amplitude~$A$ and
integrated with MIISO2 over $T=0.3$ in $1500$ steps.
The live pencil cutoff of Section~\ref{sec:cutoff} recomputes
$\omega_c=h\,\omega_{N-k+1}(q)$ at each step.
For this energy, $K(q)$ scales pointwise with $1+\alpha(u')^2$,
so the Loewner contrast equals the ratio of the maximum to minimum
local stiffness factor over the bar, growing with amplitude as the
most-deformed fibers harden.

Table~\ref{tab:kpersist} reports five quantities for each amplitude.
The quantity $\min\cos\vartheta$ is the cosine of the largest
$M$-principal angle between the outlier subspace of the deformed pencil
$(K(q),M)$ and that of the reference $(K_0,M)$, minimized over the run.
A value near one means the damped modes remain close to the reference
outlier modes.
The minimum gap ratio is the smallest value of
$\omega_{N-k+1}(q)/\omega_{N-k}(q)$ observed over the run.
The peak relative drift of~$\omega_c$ is
$\max_t|\omega_c(t)-\omega_c(0)|/\omega_c(0)$.

\begin{table}[H]
\centering
\caption{Outlier tracking on the cubic-hardening bar.
``Certified'' means Proposition~\ref{prop:k-persistence}(ii) applies
(contrast below $2.48$).}
\label{tab:kpersist}
\begin{tabular}{@{}rrcrrr@{}}
\toprule
$A$ & Contrast & Certified & $\min\cos\vartheta$ & $\min$ gap ratio &
$\max|\Delta\omega_c|/\omega_c$ \\
\midrule
$0.02$ & $1.13$ & yes & $0.9998$ & $1.574$ & $0.039$ \\
$0.05$ & $1.80$ & yes & $0.9963$ & $1.565$ & $0.224$ \\
$0.10$ & $4.21$ & no & $0.9876$ & $1.511$ & $0.739$ \\
$0.15$ & $8.23$ & no & $0.9829$ & $1.412$ & $1.36$ \\
$0.20$ & $13.85$ & no & $0.9689$ & $1.234$ & $2.03$ \\
$0.25$ & $21.07$ & no & $0.9280$ & $1.061$ & $2.71$ \\
$0.30$ & $29.90$ & no & $0.1256$ & $1.002$ & $3.40$ \\
$0.40$ & $52.38$ & no & $0.0485$ & $1.001$ & $4.81$ \\
$0.80$ & $206.49$ & no & $0.0285$ & $1.001$ & $10.5$ \\
\bottomrule
\end{tabular}
\end{table}

Hypothesis~(ii) certifies only $A\le 0.05$ because the Loewner contrast
exceeds $2.48=1.575^2$ once $A$ reaches $0.10$.
Beyond the certified range, $\min\cos\vartheta$ remains above $0.92$
and the gap ratio stays above $1.06$ for all amplitudes up to $A=0.25$.
The outlier tracking therefore remains valid well into the nonlinear
regime, even without a formal certificate from
Proposition~\ref{prop:k-persistence}.

At $A=0.30$ the gap ratio reaches one during the run.
Figure~\ref{fig:cutoff-failure} tracks the gap ratio and the subspace
leakage
\[
L(q) = \sum_{j=1}^k \sin^2\theta_j(q)
\]
over time for $A\in\{0.20,0.25,0.30\}$, where $\theta_j(q)$ are the
$M$-principal angles between the live outlier subspace of $(K(q),M)$ and
the reference outlier subspace of $(K_0,M)$.
The leakage ranges from zero, when the two subspaces coincide, to $k$,
when they are completely orthogonal.

\begin{figure}[H]
\centering
\includegraphics[width=\textwidth]{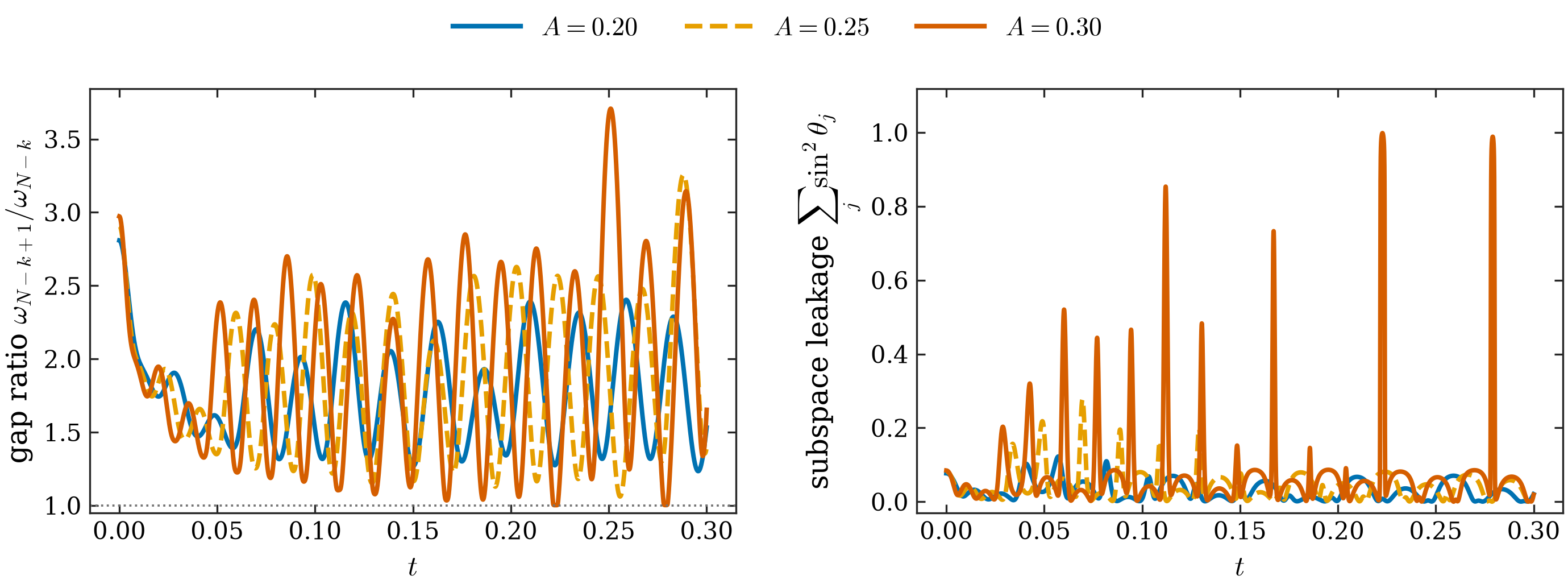}
\caption{Gap ratio (left) and subspace leakage~$L(q)$ (right) on the
cubic-hardening bar for $A\in\{0.20,0.25,0.30\}$.}
\label{fig:cutoff-failure}
\end{figure}

At $A=0.20$ the gap remains comfortably above one and $L$ stays near
zero throughout.
At $A=0.25$ the gap dips closer to one and $L$ shows occasional small
spikes, but the gap does not close and $L$ returns to zero between events.
At $A=0.30$ the gap closes transiently at each instant of peak
deformation, and $L$ spikes to approximately one at those instants.
Since $k=2$, a leakage of $L\approx 1$ means at most one of the two
damped modes is a physical mode at any given moment.
That mode receives a step of spurious damping during the gap closure,
while the displaced outlier mode misses one step of annihilation.
Both effects are confined to the brief instants of peak deformation
visible in Figure~\ref{fig:cutoff-failure}, and do not accumulate
between oscillation cycles.
As the bar rebounds and the gap reopens, $L$ drops back to zero and
correct identification is restored.
Mode mistracking can therefore occur under large deformation, yet its
effect remains localized to isolated instants and does not apply a
lasting failure of the selective damping.

The nonlinear runs of Section~\ref{sec:iga2d} use $A=0.2$, within the
open-gap regime of this study.

\subsection{Nonlinear isogeometric membrane}
\label{sec:iga2d}

The remaining experiments integrate a fixed square membrane with
cubic-hardening strain energy,
\[
W(u)=\int_\Omega\Bigl[\tfrac12|\nabla u|^2
 +\tfrac14\alpha\,|\nabla u|^4\Bigr]d\Omega,\qquad \alpha=1,
\]
on the unit square~$\Omega$.
The displacement is clamped on~$\partial\Omega$.
The space is a maximum-continuity multivariate B-spline basis of
degree~$p=4$ using tensor products.

At $20\times20$ elements there are $N=484$ degrees of freedom and
$k=84$ outliers.
The rest linear spectrum runs from $\omega_1=4.443$ to
$\omega_{\max}=139.98$ with the first outlier at $99.02$.
The membrane is released from rest with peak amplitude~$A=0.2$ and
advanced to~$T=1$, an amplitude that lies in the open-gap regime of the
bar study in Table~\ref{tab:kpersist}.

In Sections~\ref{sec:iga2dconv}--\ref{sec:iga2dcost} the cutoff is placed
above the spectrum ($\theta\equiv0$) so that selective damping is off and
the tests measure temporal order, Krylov accuracy, or cost alone.
Section~\ref{sec:cost} then turns the live cutoff back on and refines the
mesh at fixed~$h$.

\subsection{Temporal convergence}
\label{sec:iga2dconv}

Section~\ref{sec:convergence} predicts global orders two, three, and
four under Assumption~\ref{as:convergence}.
We measure them on the membrane of Section~\ref{sec:iga2d}, using step
sizes from $h\omega_{\max}=7.00$ down to $0.437$ and relative
$L^2(\Omega)$ errors at~$T$ against an eighth-order Runge--Kutta
reference at $\Delta t=5\times10^{-5}$.
The cutoff is placed above the spectrum so $\theta\equiv0$ and every
mode sees the conservative branch~$R_C$, so
outlier annihilation is off to isolate temporal order.
Table~\ref{tab:iga2dconv} reports MIISO2, MIISO3, and MIISO4 with
Krylov dimension $m=60$.
The observed orders approach the formal orders of the family.

\begin{table}[H]
\centering
\caption{Temporal convergence on the nonlinear membrane
($\theta\equiv0$, $m=60$).}
\label{tab:iga2dconv}
\begin{tabular}{@{}llrrr@{}}
\toprule
Method & $h$ & Steps & Rel.\ $L^{2}$ err.\ & $p_{\mathrm{obs}}$ \\
\midrule
MIISO2 & $0.05$ & $20$ & $3.073\times 10^{-1}$ & --- \\
 & $0.025$ & $40$ & $7.688\times 10^{-2}$ & $2.00$ \\
 & $0.0125$ & $80$ & $1.918\times 10^{-2}$ & $2.00$ \\
 & $0.00625$ & $160$ & $4.787\times 10^{-3}$ & $2.00$ \\
 & $0.003125$ & $320$ & $1.196\times 10^{-3}$ & $2.00$ \\
\midrule
MIISO3 & $0.05$ & $20$ & $1.363\times 10^{-3}$ & --- \\
 & $0.025$ & $40$ & $1.575\times 10^{-4}$ & $3.11$ \\
 & $0.0125$ & $80$ & $1.942\times 10^{-5}$ & $3.02$ \\
 & $0.00625$ & $160$ & $2.446\times 10^{-6}$ & $2.99$ \\
 & $0.003125$ & $320$ & $3.082\times 10^{-7}$ & $2.99$ \\
\midrule
MIISO4 & $0.05$ & $20$ & $9.824\times 10^{-4}$ & --- \\
 & $0.025$ & $40$ & $6.764\times 10^{-5}$ & $3.86$ \\
 & $0.0125$ & $80$ & $4.326\times 10^{-6}$ & $3.97$ \\
 & $0.00625$ & $160$ & $2.717\times 10^{-7}$ & $3.99$ \\
 & $0.003125$ & $320$ & $1.700\times 10^{-8}$ & $4.00$ \\
\bottomrule
\end{tabular}
\end{table}

The measured orders match the predictions of Theorem~\ref{thm:global-convergence}, confirming that the MIISO construction retains the correct numerical behavior on a nonlinear problem.

\subsection{Sensitivity to Krylov dimension}
\label{sec:iga2darnoldi}

Assumption~\ref{as:convergence} absorbs the Arnoldi truncation into the
local error constant.
On the same membrane and reference as Table~\ref{tab:iga2dconv}, we fix
a coarse step $h=0.05$ and a finer step $h=0.0125$ and vary the Krylov
dimension~$m$, with relative $L^2(\Omega)$ errors reported in
Table~\ref{tab:iga2darnoldi}.

\begin{table}[H]
\centering
\caption{Relative $L^2(\Omega)$ error versus Krylov dimension~$m$
($\theta\equiv0$).}
\label{tab:iga2darnoldi}
\begin{tabular}{@{}llrrrrr@{}}
\toprule
$h$ & Method & $m=10$ & $m=20$ & $m=30$ & $m=40$ & $m=60$ \\
\midrule
$0.05$ & MIISO2
 & $4.786\times 10^{-1}$
 & $3.073\times 10^{-1}$
 & $3.073\times 10^{-1}$
 & $3.073\times 10^{-1}$
 & $3.073\times 10^{-1}$ \\
 & MIISO3
 & $4.568\times 10^{-3}$
 & $1.365\times 10^{-3}$
 & $1.363\times 10^{-3}$
 & $1.363\times 10^{-3}$
 & $1.363\times 10^{-3}$ \\
 & MIISO4
 & $1.246\times 10^{-3}$
 & $9.835\times 10^{-4}$
 & $9.824\times 10^{-4}$
 & $9.824\times 10^{-4}$
 & $9.824\times 10^{-4}$ \\
\midrule
$0.0125$ & MIISO2
 & $1.918\times 10^{-2}$
 & $1.918\times 10^{-2}$
 & $1.918\times 10^{-2}$
 & $1.918\times 10^{-2}$
 & $1.918\times 10^{-2}$ \\
 & MIISO3
 & $1.942\times 10^{-5}$
 & $1.942\times 10^{-5}$
 & $1.942\times 10^{-5}$
 & $1.942\times 10^{-5}$
 & $1.942\times 10^{-5}$ \\
 & MIISO4
 & $4.326\times 10^{-6}$
 & $4.326\times 10^{-6}$
 & $4.326\times 10^{-6}$
 & $4.326\times 10^{-6}$
 & $4.326\times 10^{-6}$ \\
\bottomrule
\end{tabular}
\end{table}

At the coarse step, $m=10$ under-resolves the $\varphi$-actions, while
$m=20$ already recovers essentially the error of $m=60$.
At the finer step the same $m=10$ already matches $m=60$ to the
reported digits.
Finer steps therefore tolerate smaller Krylov dimensions, while coarser steps
need a modestly larger~$m$ before the temporal error increases, yet in
either case the required dimensions remain small.
We use $m=30$ or~$40$ in the cost studies below to cover the
coarse-step regime, and the convergence test used $m=60$ for maximum robustness. Practical runs
can use smaller Krylov dimensions depending on the problem and the desired accuracy.

\subsection{Cost at equal accuracy}
\label{sec:iga2dcost}

We compare wall time to reach a fixed error on a finer
$40\times40$ mesh ($N=1764$, $k=164$) of the same membrane, with the
same setup as Section~\ref{sec:iga2d} ($A=0.2$, $\alpha=1$, $T=1$).
Both methods run without their dissipation
($\theta\equiv0$, $\rho_\infty=1$), so the comparison is between
MIISO's Krylov evaluations and generalized-$\alpha$, which forms and
factors $K_{\mathrm{eff}}$ in a Newton iteration loop each step.
Each timing is the fastest of three runs. MIISO uses $m=40$ on the
coarser step sizes. Errors are against the same eighth-order
Runge--Kutta reference as above.

Table~\ref{tab:iga2d-wall} lists the successive step sizes, and
Table~\ref{tab:iga2d-cost} records the cheapest run among them for each
tolerance.
At $10^{-3}$, MIISO4 finishes in $1.29\,\mathrm{s}$ at $h=0.05$ while
generalized-$\alpha$ needs $42.1\,\mathrm{s}$ at $h=0.003125$, about
$33$ times longer.
At $10^{-6}$, MIISO4 remains ahead of MIISO3
($10.5\,\mathrm{s}$ versus $13.5\,\mathrm{s}$).

\begin{table}[H]
\centering
\caption{Wall time versus relative $L^2(\Omega)$ error on the
$40\times40$ membrane ($\theta\equiv0$, $m=40$, $\rho_\infty=1$).}
\label{tab:iga2d-wall}
\footnotesize
\setlength{\tabcolsep}{5pt}
\begin{tabular}{@{}llr@{\quad}r@{\quad}r@{\quad}r@{}}
\toprule
Method & $h$ & Steps & Rel.\ $L^{2}$ err.\ & Wall (s) & Newton/step \\
\midrule
MIISO2 & $0.05$ & $20$ & $3.024\times 10^{-1}$ & $0.418$ & --- \\
 & $0.025$ & $40$ & $7.565\times 10^{-2}$ & $0.831$ & --- \\
 & $0.0125$ & $80$ & $1.887\times 10^{-2}$ & $1.679$ & --- \\
 & $0.00625$ & $160$ & $4.710\times 10^{-3}$ & $3.345$ & --- \\
 & $0.003125$ & $320$ & $1.176\times 10^{-3}$ & $6.707$ & --- \\
 & $0.0015625$ & $640$ & $2.939\times 10^{-4}$ & $13.667$ & --- \\
\midrule
MIISO3 & $0.05$ & $20$ & $1.355\times 10^{-3}$ & $0.836$ & --- \\
 & $0.025$ & $40$ & $1.562\times 10^{-4}$ & $1.705$ & --- \\
 & $0.0125$ & $80$ & $1.922\times 10^{-5}$ & $3.397$ & --- \\
 & $0.00625$ & $160$ & $2.419\times 10^{-6}$ & $6.798$ & --- \\
 & $0.003125$ & $320$ & $3.047\times 10^{-7}$ & $13.540$ & --- \\
 & $0.0015625$ & $640$ & $3.826\times 10^{-8}$ & $27.718$ & --- \\
\midrule
MIISO4 & $0.05$ & $20$ & $9.740\times 10^{-4}$ & $1.290$ & --- \\
 & $0.025$ & $40$ & $6.715\times 10^{-5}$ & $2.630$ & --- \\
 & $0.0125$ & $80$ & $4.295\times 10^{-6}$ & $5.341$ & --- \\
 & $0.00625$ & $160$ & $2.698\times 10^{-7}$ & $10.496$ & --- \\
 & $0.003125$ & $320$ & $1.688\times 10^{-8}$ & $21.126$ & --- \\
 & $0.0015625$ & $640$ & $1.056\times 10^{-9}$ & $41.856$ & --- \\
\midrule
gen-$\alpha$ & $0.05$ & $20$ & $2.513\times 10^{-1}$ & $3.992$ & $3.00$ \\
 & $0.025$ & $40$ & $6.303\times 10^{-2}$ & $8.151$ & $3.00$ \\
 & $0.0125$ & $80$ & $1.578\times 10^{-2}$ & $12.562$ & $2.35$ \\
 & $0.00625$ & $160$ & $3.948\times 10^{-3}$ & $21.424$ & $2.00$ \\
 & $0.003125$ & $320$ & $9.870\times 10^{-4}$ & $42.080$ & $2.00$ \\
 & $0.0015625$ & $640$ & $2.468\times 10^{-4}$ & $83.299$ & $2.00$ \\
\bottomrule
\end{tabular}
\end{table}

\begin{table}[H]
\centering
\caption{Cheapest wall time from Table~\ref{tab:iga2d-wall} at each
tolerance.}
\label{tab:iga2d-cost}
\begin{tabular}{@{}lrrrrrrrr@{}}
\toprule
Tol.\ &
\multicolumn{2}{c}{MIISO2} &
\multicolumn{2}{c}{MIISO3} &
\multicolumn{2}{c}{MIISO4} &
\multicolumn{2}{c}{gen-$\alpha$} \\
\cmidrule(lr){2-3}\cmidrule(lr){4-5}\cmidrule(lr){6-7}\cmidrule(lr){8-9}
 & Wall (s) & $h$ & Wall (s) & $h$ & Wall (s) & $h$ & Wall (s) & $h$ \\
\midrule
$10^{-2}$ & $3.345$ & $0.00625$ & $0.836$ & $0.05$ & $1.290$ & $0.05$ &
$21.424$ & $0.00625$ \\
$10^{-3}$ & $13.667$ & $0.0015625$ & $1.705$ & $0.025$ & $1.290$ & $0.05$ &
$42.080$ & $0.003125$ \\
$10^{-4}$ & --- & --- & $3.397$ & $0.0125$ & $2.630$ & $0.025$ &
--- & --- \\
$10^{-5}$ & --- & --- & $6.798$ & $0.00625$ & $5.341$ & $0.0125$ &
--- & --- \\
$10^{-6}$ & --- & --- & $13.540$ & $0.003125$ & $10.496$ & $0.00625$ &
--- & --- \\
\bottomrule
\end{tabular}
\end{table}

All three MIISO variants are cheaper than generalized-$\alpha$ at matched
accuracy, and the gap widens as the tolerance tightens.

\subsection{Equal step-size scaling}
\label{sec:cost}

Section~\ref{sec:iga2dcost} varied~$h$ at fixed mesh size with
$\theta\equiv0$.
Here $h=0.05$ is fixed, the mesh is refined over five sizes, and the
live cutoff of Section~\ref{sec:cutoff} is on so that each step updates
$\omega_c$ from the current pencil like in a nonlinear run.
The element counts are
$n_{\mathrm{el}}\in\{20,40,60,80,100\}$, so $N$ runs from $484$ to
$10\,404$.
MIISO uses Krylov dimension $m=40$, and generalized-$\alpha$ uses the
same Newton iteration at $\rho_\infty\in\{0,0.5,1\}$.

Table~\ref{tab:equal-h} reports seconds per step.
At $N=10\,404$, MIISO2 is about $2.0$ times faster per step than
generalized-$\alpha$ at $\rho_\infty=0.5$ or $1$, and about $2.2$ times
faster than $\rho_\infty=0$.

\begin{table}[H]
\centering
\caption{Seconds per step at fixed $h=0.05$ (eight steps, $m=40$),
live cutoff on.}
\label{tab:equal-h}
\setlength{\tabcolsep}{8pt}
\renewcommand{\arraystretch}{1.25}
\resizebox{\textwidth}{!}{%
\begin{tabular}{@{}rrrrrrrrr@{}}
\toprule
$n_{\mathrm{el}}$ & $N$ & $k$ & MIISO2 & MIISO3 & MIISO4 &
gen-$\alpha(0)$ & gen-$\alpha(0.5)$ & gen-$\alpha(1)$ \\
\midrule
$20$ & $484$ & $84$ &
$1.99\times 10^{-2}$ &
$2.64\times 10^{-2}$ &
$3.05\times 10^{-2}$ &
$3.52\times 10^{-2}$ &
$3.49\times 10^{-2}$ &
$3.47\times 10^{-2}$ \\
$40$ & $1764$ & $164$ &
$1.36\times 10^{-1}$ &
$1.55\times 10^{-1}$ &
$1.62\times 10^{-1}$ &
$2.38\times 10^{-1}$ &
$2.18\times 10^{-1}$ &
$2.18\times 10^{-1}$ \\
$60$ & $3844$ & $244$ &
$4.65\times 10^{-1}$ &
$4.93\times 10^{-1}$ &
$5.84\times 10^{-1}$ &
$9.83\times 10^{-1}$ &
$9.28\times 10^{-1}$ &
$9.22\times 10^{-1}$ \\
$80$ & $6724$ & $324$ &
$1.04$ &
$1.17$ &
$1.29$ &
$2.47$ &
$2.22$ &
$2.27$ \\
$100$ & $10\,404$ & $404$ &
$1.60$ &
$1.79$ &
$1.95$ &
$3.46$ &
$3.15$ &
$3.17$ \\
\bottomrule
\end{tabular}%
}
\end{table}

All three MIISO variants are cheaper per step than generalized-$\alpha$ at every mesh size tested, and the per-step cost ratio grows with mesh refinement.

\section{Conclusions}
\label{sec:concl}

MIISO provides a temporal approach to selective spectral damping, addressing the IGA outlier problem without modifying the spatial discretization.
The method is based on a modified, non-holomorphic exponential
amplification map, built from smooth piecewise blends of Pad\'e
approximants that supply a conservative parent~$R_C$ on physical modes
and a dissipative parent~$R_D$ on outliers, with the latter repaired if it fails $A$-acceptability.
These two parents are then blended with a taper placed at the
cutoff~$\omega_c$ from the outlier count~$k$, forming the selective
damping of the family.
This modified map replaces the exact exponential in the
$\varphi$-functions of an exponential Rosenbrock integrator, and the
steps are evaluated matrix-free through Krylov subspace approximation.

Section~\ref{sec:theory} shows that the blend is unconditionally
$A$-stable, undamped below~$\omega_0$, and $L$-stable on outliers.
On a linear problem each mode is advanced by~$R$, and under
Assumption~\ref{as:convergence} the global orders are two, three, and
four.
When the Jacobian evolves, Proposition~\ref{prop:k-persistence} gives a
sufficient condition for the outlier mode tracking to remain valid,
and Section~\ref{sec:nonlinear-ext} carries the same
construction to nonlinear and nonautonomous systems.

The numerical experiments of Section~\ref{sec:numerics} verify selective damping
on standard isogeometric spectra.
On a linear free-vibration benchmark, MIISO matches the outlier annihilation
of spatial subspace removal without projecting out degrees of freedom
(Section~\ref{sec:linear-dynamics}).
The predicted withdrawal of annihilation appears only at step sizes well
below those used in practice, and outlier tracking holds under nonlinearity
up to the amplitude where the outlier gap closes.
On nonlinear problems the predicted convergence orders are recovered, and
MIISO costs less than generalized-$\alpha$ at both matched accuracy and
matched step size.
 
\section{Future work}
\label{sec:future}

\begin{enumerate}
\item
The temporal framework is developed for IGA, where the outlier band is separable
and its count is known analytically, but the selective damping mechanism is not
tied to that setting.
High-order finite elements and discontinuous Galerkin methods exhibit analogous
high-frequency spectral pathology, though the affected bands are less cleanly
characterized and may depend on more complex factors.
Developing a reliable identification strategy for those settings and extending
the temporal approach is the most significant next direction.

\item
Section~\ref{sec:convergence} establishes convergence orders on the resolved
spectrum but does not extend stiff-order theory to the modified maps~$\phit_j$,
because they differ from~$\varphi_j$ at large~$|h\lambda|$.
Understanding that regime is the natural route to stiff-order theory.
Section~\ref{sec:cutoff} places~$\omega_c$ from assembled operators or a partial
eigensolve of~$hJ$, and an assembly-free proxy would reduce cost on large systems.
The Arnoldi runs on~$f_n$, $f_t$, and the defects in Section~\ref{sec:algs}
are independent once the stages are fixed and can be parallelized over
Jacobian matrix-vector products.
Reduced Krylov dimensions, subspace reuse across stages, and cheaper cutoff
evaluation are further directions for lowering cost, and higher-order methods
beyond MIISO4 can be constructed by the same selective blend.
\end{enumerate}

\section*{Data availability}
\label{sec:data}

The data, implementation, and scripts that generate the figures and tables
of this study are available at
\url{https://github.com/Sreeram-Shankar/MIISO}.

\section*{CRediT authorship contribution statement}

\textbf{Sreeram Shankar:} Conceptualization, Methodology, Software,
Validation, Formal analysis, Investigation, Visualization, Writing -- original draft,
Writing -- review \& editing.

\section*{Funding}

This research received no specific grant from any funding agency.

\section*{Acknowledgments}

The author thanks Frimpong Baidoo for extensive discussions and for an
introduction to isogeometric analysis and related spectral issues;
Thomas~J.~R.~Hughes and his research group for valuable discussions and
insight, and for the opportunity to work within the group; and Jesse
Chan for continued mentorship and advice throughout this work and
beyond.
All are affiliated with the Oden Institute for Computational
Engineering and Sciences at The University of Texas at Austin.

\section*{Declaration of generative AI and AI-assisted technologies in the manuscript preparation process}

During the preparation of this work the author used Claude (Anthropic) and Cursor in order to
polish the writing, assist with debugging and prototyping of the numerical
code, and create visualizations and tables for the experiments.
After using these tools, the author reviewed and edited the content as needed
and takes full responsibility for the content of the published article.

\end{document}